\def\Log{{\rm Log}}
\def\Supp{{\rm supp}}
\definecolor{cadmiumgreen}{rgb}{0.0, 0.42, 0.24}
\DeclareSymbolFont{extraup}{U}{zavm}{m}{n}
\DeclareMathSymbol{\varheart}{\mathalpha}{extraup}{86}
\DeclareMathSymbol{\vardiamond}{\mathalpha}{extraup}{87}
\theoremstyle{definition}
\newtheorem{mtheorem}{Theorem}
\newtheorem{mconjecture}[mtheorem]{Conjecture}
\newtheorem{theorem}{Theorem}[section]
\newtheorem{definition}[theorem]{Definition}
\newtheorem{lemma}[theorem]{Lemma}
\newtheorem{proposition}[theorem]{Proposition}
\newtheorem{corollary}[theorem]{Corollary}
\newtheorem*{theorem*}{Theorem}
\newtheorem{remark}[theorem]{Remark}
\newtheorem{example}[theorem]{Example}
\newcommand{\RR}{\mathbb{R}}
\newcommand{\R}{\mathbb{R}}
\newcommand{\ZZ}{\mathbb{Z}}
\newcommand{\Z}{\mathbb{Z}}
\newcommand{\CC}{\mathbb{C}}
\newcommand{\C}{\mathbb{C}}
\newcommand{\T}{\mathbb{T}}
\newcommand{\trop}{\mathrm{trop}}
\newcommand{\sD}{\mathscr{D}}
\newcommand{\cO}{\mathcal{O}}
\newcommand{\cC}{\mathcal{C}}
\newcommand{\sT}{\mathscr{T}}
\newcommand{\sS}{\mathscr{S}}
\newcommand{\sZ}{\mathscr{Z}}
\newcommand{\Spec}{\mathrm{Spec}}
\newcommand{\Hom}{\mathrm{Hom}}
\newcommand{\Trop}{\mathrm{trop}}
\renewcommand{\P}{\mathbb P}
 \newcommand{\supp}[1]{{\mathrm{supp}(#1)}}
\newcommand{\rmin}{\mathbf{r}}
\newcommand{\rminm}{\mathbf{r}}
\newcommand{\rmax}{\mathbf{R}}
\begin{document}
\title{Dynamical tropicalisation}
\author{Farhad Babaee}

\address{School of Mathematics, University of Bristol}
\email{farhad.babaee@bristol.ac.uk}
\date{}
\begin{abstract}
We analyse the dynamics of the pullback of the map $z \longmapsto z^m$ on the complex tori and toric varieties. We will observe that tropical objects naturally appear in the limit, and review several theorems in tropical geometry.
\end{abstract}
\maketitle
\setcounter{tocdepth}{1}
\tableofcontents
\section{Introduction}
In this article, we employ several key ideas from tropical geometry to analyse the dynamics of the pullback of the map
\begin{align*}
  \Phi_m: (\CC^*)^n &\longrightarrow (\CC^*)^n \\
  (z_1, \dots, z_n) &\longmapsto (z_1^m, \dots , z_n^m),
\end{align*}
as the positive integer $m\to \infty.$ This analysis, in turn, lets us review certain aspects of tropical geometry from a dynamical standpoint. To start with, let us examine the case $n=1.$ Observe that for \emph{any} $z\in \C^*,$ the family of sets $\big\{\Phi_m^{-1}(z)\big\}$ \emph{converges} towards a uniform distribution of points on the unit circle $S^1$ as $m\to \infty$. Formally, this observation can be formulated as the following \emph{equidistribution theorem}: for any $z \in \C^*,$
$$
\frac{1}{m}\Phi_m^{*}(\delta_z):=\frac{1}{m}\sum_{ \Phi_m(a)=z} \delta_{a} \longrightarrow \mu(S^1),\quad \text{as $m\to \infty$},
$$
where $\delta_z$ is the Dirac measure at $z$, $\mu(S^1)$ is the Haar measure on $S^1,$ and the limit is in the weak sense of measures. Brolin's remarkable theorem \cite{Brolin} is a generalisation of this observation: for \emph{any} monic algebraic map of $f: \C \longrightarrow \C$ of degree $d\geq 2,$ there exists a (harmonic) measure $\mu_f$, such that for any \emph{generic} point $z\in \C$,
  $$
  \frac{1}{d^k}(f^{k})^*(\delta_{z}) \longrightarrow \mu_f, \quad \text{as $k\to \infty$},
  $$
  where $f^k:= f\circ \dots \circ f$ is the $k$-fold composition. This result was extended to $\P^1$ for polynomials in \cite{Ljubich} and for rational maps in \cite{FLM:Brolin}. Here, the genericity of a point in $\C$ and $\P^1$  means that $z$ is outside an invariant set of cardinality less than or equal to $1$ and $2$, respectively.
  \vskip 2mm
  In complex dynamics, one naturally seeks the generalisation of Brolin's theorem in higher dimensions and codimensions, and the question can be suitably formulated in the language of \emph{currents}, where one explores weak limits of \emph{pullback} of currents after a correct normalisation. Informally, currents on a complex smooth manifold $X$ are continuous functionals acting on the space of smooth forms with compact support and of the appropriate (bi-)degree. For instance, an algebraic subvariety $Z\subseteq X$ of dimension $p$ defines an \emph{integration current} $[Z]$, which is of bidimension $(p,p)$ and acts on the smooth forms with compact support of bidegree $(p,p)$ by integration:
  $$
  \langle [Z] , \varphi \rangle := \int_{Z_{\text{reg}}} \varphi.
  $$
  Generally, given a smooth algebraic variety $X$, $f:X \longrightarrow X$ a holomorphic endomorphism of algebraic degree $d,$  and $Z \subseteq X$ an algebraic subvariety of dimension $p,$ one is interested in analysing (the existence of) the weak limit of
  $$
  \frac{1}{d^{(n-p)k}} (f^k)^*[Z]:= \frac{1}{d^{(n-p)k}} [(f^k)^{-1}(Z)],
  $$
  as $k \to \infty.$ We recall basic definitions in the theory of currents in Section~\ref{sec:prem_currents}.
  \vskip 2mm
  On the tropical geometry side, algebraic subvarieties of the torus are degenerated to obtain their \emph{tropicalisation}. For instance, a tropicalisation can be obtained by the \emph{logarithm map}
$$
\Log: (\CC^*)^n \longrightarrow \R^n, \quad (z_1, \dots, z_n) \longmapsto (-\log|z_1|, \dots , -\log|z_n|).
$$
Given an algebraic subvariety $Z\subseteq (\C^*)^n$, the set $\Log(Z)$ is called the \textit{amoeba of} $Z.$ By Bergman's theorem \cite{Bergman} there exists a close subset of $\R^n$ such that
$$
\frac{1}{\log|t|}\Log(Z) \longrightarrow \cC,\quad \text{as $|t| \to \infty$},
$$
where the limit is in the Hausdorff metric in the compact sets of $\RR^n,$ and $t$ is a complex parameter. It is further shown in tropical geometry that $\cC$ can be equipped with a structure of a \emph{tropical cycle}; see Definition~\ref{BalancingCondition} and Section~\ref{sec:trop-toric}. With such an induced structure, we set $\cC$ to be the tropicalisation of $Z$, denoted by $\trop(Z)$.
\vskip 2mm
Note that for any point $z\in (\C^*)^n,$
$$\frac{1}{\log|t|}\Log(\{z\}) \longrightarrow \{ 0\} ,\quad \text{as $|t| \to \infty$},$$
which corresponds to the above-mentioned equidistribution theorem.
More generally, our first theorem shows how naturally tropical objects emerge as geometric objects in holomorphic dynamics.
\begin{mtheorem}\label{thm:main_intro}
Let $Z\subseteq (\CC^*)^n$ be an irreducible subvariety of dimension $p,$ then
$$
\frac{1}{m^{n-p}}\Phi_m^*[Z] \longrightarrow \mathscr{T}_{\trop(Z)}, \quad \text{as $m\to \infty$},
$$
where $\mathscr{T}_{\trop(Z)}$ is the \emph{complex tropical current} associated to $\trop(Z).$
\end{mtheorem}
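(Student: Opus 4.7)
The approach combines amoeba convergence, a torus-equivariance Fourier reduction, and a cone-by-cone identification on a toric compactification. Since $\Phi_m(w)=z$ forces $|w_j|=|z_j|^{1/m}$, one has $\Log(\Phi_m^{-1}(Z)) = \tfrac{1}{m}\Log(Z)$. By Bergman's theorem this rescaled amoeba converges to $\trop(Z)$ in the Hausdorff topology on compact subsets of $\mathbb{R}^n$; hence any test form $\varphi$ whose support is disjoint from $\Log^{-1}(\trop(Z))$ satisfies $\Phi_m^{-1}(Z)\cap\supp{\varphi}=\emptyset$ for $m$ large, so $\langle\Phi_m^*[Z],\varphi\rangle=0$. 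Thus any weak-$*$ limit of $\tfrac{1}{m^{n-p}}\Phi_m^*[Z]$ is supported on $\Log^{-1}(\trop(Z))$, matching the support of $\mathscr{T}_{\trop(Z)}$.

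Let $T=(\mathbb{S}^1)^n$ act on $(\mathbb{C}^*)^n$ by coordinate multiplication. From $\Phi_m\circ\rho_t=\rho_{t^m}\circ\Phi_m$ one sees that $\Phi_m^{-1}(Z)$ is $\mu_m^n$-invariant. Decomposing a smooth, compactly supported test form into $T$-Fourier modes $\varphi=\sum_{k\in\mathbb{Z}^n}\varphi_k$ (with $\rho_t^*\varphi_k=t^{-k}\varphi_k$), the $\mu_m^n$-invariance forces $\langle\Phi_m^*[Z],\varphi_k\rangle=0$ unless $k\in m\mathbb{Z}^n$. For $k\in m\mathbb{Z}^n\setminus\{0\}$, the rapid decay of smooth-form Fourier coefficients on the compact torus, together with the mass bound $\|\Phi_m^*[Z]\|_K=O(m^{n-p})$ on compact $K$, makes these contributions $o(1)$ after the $m^{n-p}$-normalization. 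The limit is therefore $T$-invariant, and it suffices to test it against the zero Fourier mode $\varphi_0$.

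For each maximal cone $\sigma$ of $\trop(Z)$ of dimension $p$ and weight $m_\sigma$, pass to a smooth projective toric compactification $X_\Sigma$ whose fan refines $\trop(Z)$ near $\sigma$. By Sturmfels--Tevelev, the closure $\bar Z\subset X_\Sigma$ meets the torus orbit $O_\sigma$ transversely with multiplicity $m_\sigma$. The map $\Phi_m$ extends to $X_\Sigma$ as coordinate-wise $m$-th power, and in local holomorphic coordinates adapted to $O_\sigma$ the preimage $\Phi_m^{-1}(Z)$ decomposes into $m^{n-p}$ local sheets indexed by cosets of $\mu_m^p$ in $\mu_m^n$, each counted with multiplicity $m_\sigma$. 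Integrating $\varphi_0$ against these sheets and normalizing by $m^{n-p}$ produces a torus average equal to $m_\sigma$ times the local description of $\mathscr{T}_{\trop(Z)}$ on $\Log^{-1}(\sigma)$; summing over maximal cones yields the identification. The main obstacle is this local step: one must combine the Sturmfels--Tevelev intersection count along the toric boundary with a Weyl-type equidistribution of the $\mu_m^n/\mu_m^p$-translates on $T$ to convert the normalized sheet-sum into the torus integral defining the complex tropical current.
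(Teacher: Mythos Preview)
Your overall architecture---support constraint via Bergman, $(S^1)^n$-invariance via a Fourier/roots-of-unity argument, and weight identification through a toric compactification---matches the paper's, and your $\mu_m^n$-invariance trick for killing the nonzero Fourier modes is a clean variant of the paper's explicit computation in Lemma~\ref{lem:fourier}. Two steps, however, are not closed.

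First, you only show that any cluster value has support \emph{contained in} $\Log^{-1}(\trop(Z))$. The paper also needs the reverse inclusion (Proposition~\ref{Prop:Support}), proved via a uniform local-mass lower bound (Lemma~\ref{lem:local-mass}): for each $b\in\Log^{-1}(\trop(Z))$ and small $\epsilon$, one has $\int_{B_{2\epsilon}(b)} m^{p-n}\Phi_m^*[Z]\wedge\beta^p\geq C\epsilon^{2p}$ uniformly in $m$. Without this, you cannot invoke Demailly's second theorem of support to write a cluster value in the fibred form $\sum_\sigma\int[\pi_\sigma^{-1}(x)]\,d\eta_\sigma(x)$, which is the input to your Fourier step.

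Second, and more substantively, your part~3 does not actually identify the weights. You propose to read off $m_\sigma$ from a local sheet decomposition \emph{near the boundary orbit $O_\sigma$}, but the test forms $\varphi_0$ you must pair against live over $\Log^{-1}(\sigma^\circ)$ in the \emph{open} torus; you give no mechanism for transferring the Sturmfels--Tevelev boundary count into the interior, and the ``$m^{n-p}$ sheets indexed by cosets of $\mu_m^p$'' picture is not literally correct for a general $Z$. The paper takes a different, cohomological route: it first shows (Step~4) that any cluster value extends by zero to the compact smooth $X_{\Sigma'}$ without acquiring mass on the toric boundary, and then (Step~5) observes that since $D_\sigma$ is $\Phi_m$-invariant and $\dim\cO(\sigma)=n-p$, the pairing $\int_{X_{\Sigma'}}[D_\sigma]\wedge(\,\cdot\,)$ is preserved by $m^{p-n}\Phi_m^*$. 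Reducing this pairing to a cohomology-class computation via Demailly regularisation, and using weak continuity of the cohomology-class assignment, gives $w_\sigma(\overline{\mathscr S})=w_\sigma(\overline Z)$ for every cluster value $\mathscr S$, hence uniqueness of the limit. Your direct equidistribution idea could perhaps be made to work, but as written it stops exactly where the real difficulty begins.
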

 A complex tropical current $\sT_{\cC}$ is a closed current of bidimension $(p,p),$ associated to the tropical cycle $\cC\subseteq \RR^n$ of pure dimension $p$, and with support $\Log^{-1}(|\cC|).$ Here $|\cC|$ denotes the support of $\cC$ which is obtained by forgetting its polyhedral structure. Complex tropical currents were introduced in \cite{Babaee}, and we recall their definition in Section~\ref{sec:trop_currents}.
\vskip 2mm
Noting that the pullback can be extended to more general currents, we introduce the following  definition.
\begin{definition}
Let $\sT$ be a closed positive current of bidimension $(p,p)$ on $(\C^*)^n$. We define the \emph{dynamical tropicalisation} of $\sT$ as 
$$
\lim_{m\to \infty} \frac{1}{m^{n-p}} \Phi^*_m (\sT),
$$
when the limit exists.
\end{definition}
 Theorem~\ref{thm:main_intro}, therefore, states that the dynamical tropicalisation of an integration current along a subvariety of the torus yields the tropical current associated to the tropicalisation of that subvariety. Even though the preceding theorem is stated only in $(\C^*)^n,$ we require a passage to \emph{toric varieties} to provide a proof. Recall that a toric variety is an irreducible variety $X$ such that
  \begin{enumerate}
  \item [(a)] $(\C^*)^n$ is a Zariski open subset of $X,$ and
  \item [(b)] the action of $(\C^*)^n$ on itself extends to an action of $(\C^*)^n$ on $X.$
  \end{enumerate}
The action of $(\C^*)^n$ on $X$ partitions $X$ into \emph{orbits},  and we will observe in Section~\ref{sec:trop-toric} that the continuous extension of the endomorphism $\Phi_m$ from $(\C^*)^n$ to $X$ gives rise to an equidistribution theorem of points within each orbit; see Proposition~\ref{prop:equi_orbit}.
\vskip 2mm
The above-mentioned tropicalisation corresponds to the tropicalisation with respect to the \emph{trivial valuation}. In essence, this tropicalisation captures the (exponential) directions where a subvariety $Z\subseteq (\C^*)^n$ approaches infinity, and there always exists a toric variety $X$ that can contain those directions at infinity so that
\begin{itemize}
    \item [(a)]the closure of $Z$ in $X$ is compact, and
    \item [(b)]the the boundary divisors $X\setminus (\C^*)^n$ intersect the closure of $Z$ in $X$ properly.
\end{itemize}
This closure is called the \emph{tropical compactification} of $Z$ in $X;$ see Definition~\ref{def:trop_comp}. In Subection~\ref{sec:toric_tori}, we extend Theorem~\ref{thm:main_intro} to toric varieties: 
\begin{mtheorem}\label{thm:intro_toric}
Let $Z\subseteq (\CC^*)^n$ be an irreducible subvariety of dimension $p,$ and $\bar{Z}$  be the {tropical compactification} of $Z$ in the smooth projective toric variety $X$. Then,
$$
\frac{1}{m^{n-p}}\Phi_m^*[\bar{Z}] \longrightarrow \overline{\mathscr{T}}_{\trop(Z)}, \quad \text{as $m\to \infty$},
$$
where $\Phi_m: X\longrightarrow X$ is the continuous extension of $\Phi_m: (\C^*)^n \longrightarrow (\C^*)^n,$ and $\overline{\mathscr{T}}_{\trop(Z)}$ is the extension by zero of $\mathscr{T}_{\trop(Z)}$ to $X.$
\end{mtheorem}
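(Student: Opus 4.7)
The strategy is to combine Theorem~\ref{thm:main_intro} on the open orbit with a cohomological mass-preservation argument on $X$, forcing any weak subsequential limit on $X$ to have no mass on the toric boundary, so that it must agree with the extension by zero $\overline{\sT}_{\trop(Z)}$. Passing to a smooth projective toric refinement of the fan if necessary (which preserves the tropical compactification condition), we may fix a K\"ahler form $\omega$ on $X$.

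The first step is a cohomological mass bound. Since $\Phi_m$ is the toric endomorphism induced by multiplication by $m$ on the cocharacter lattice, it acts on the cohomology ring of the smooth complete toric variety $X$ (which is generated in degree $2$) by $\Phi_m^*\alpha = m^k\alpha$ for $\alpha\in H^{2k}(X,\R)$. In particular, the cohomology class of $m^{-(n-p)}\Phi_m^*[\overline{Z}]$ is constantly $[\overline{Z}]\in H^{2(n-p)}(X,\R)$. Because the mass of a closed positive $(p,p)$-current on the K\"ahler manifold $(X,\omega)$ is determined by its cohomology class as $[\cdot]\cdot[\omega]^p$, this yields a uniform mass bound, and weak compactness of positive currents produces a subsequential weak limit $\sT_\infty$, a closed positive $(p,p)$-current on $X$ with $[\sT_\infty]=[\overline{Z}]$ and total mass $[\overline{Z}]\cdot[\omega]^p$.

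The second step is identification on the open torus. Since the boundary divisors of $X$ intersect $\overline{Z}$ properly (by the tropical compactification hypothesis), the restriction of $\Phi_m^*[\overline{Z}]$ to $(\C^*)^n$ equals $\Phi_m^*[Z]$, and Theorem~\ref{thm:main_intro} gives $\sT_\infty\rest{(\C^*)^n}=\sT_{\trop(Z)}$. A mass formula for complex tropical currents, recorded in Section~\ref{sec:trop-toric}, evaluates the total mass of $\sT_{\trop(Z)}$ against $\omega$ as exactly $[\overline{Z}]\cdot[\omega]^p$. Comparing with the first step, the positive current $\sT_\infty$ carries zero mass on the boundary $X\setminus(\C^*)^n$, hence it is the extension by zero of $\sT_{\trop(Z)}$, i.e., $\sT_\infty=\overline{\sT}_{\trop(Z)}$. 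Uniqueness of the limit promotes subsequential convergence to full convergence.

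The main obstacle is the mass-matching step: the identity $\|\sT_{\trop(Z)}\|_{(\C^*)^n,\omega}=[\overline{Z}]\cdot[\omega]^p$ is precisely the combinatorial manifestation of the tropical compactification hypothesis, linking the polyhedral data of $\trop(Z)$ to the toric intersection theory on $X$. An alternative, more hands-on route would be to analyse boundary contributions orbit-by-orbit, invoking Proposition~\ref{prop:equi_orbit} together with a stratified version of Theorem~\ref{thm:main_intro} on each $O_\sigma$ and re-assembling the pieces into a closed current on $X$; either way, the proper intersection of $\overline{Z}$ with the toric-invariant divisors is the essential hypothesis preventing mass from escaping to the boundary in the limit.
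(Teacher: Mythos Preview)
Your proposal is correct but takes a genuinely different route from the paper. The paper's proof is short: it simply invokes Step~4 of the proof of Theorem~\ref{thm:main_conv}, where the extension-by-zero step is handled by a \emph{support} argument rather than a mass argument. Namely, any cluster value $\widetilde{\sS}$ of $\overline{\sZ}_m$ on $X$ agrees with $\overline{\sT}_{\trop(Z)}$ on the open torus, so their difference is a closed positive $(p,p)$-current supported in $\bigcup_i D_i\cap\supp{\overline{\sT}_{\trop(Z)}}$, which by Theorem~\ref{thm:weight_intersect}(a) has Cauchy--Riemann dimension $<p$; Demailly's first theorem of support then forces the difference to vanish. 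Remark~\ref{rem:proper_for_all} supplies the identification $\Phi_m^*[\overline{Z}]=\overline{\Phi_m^*[Z]}$.

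Your cohomological mass-matching argument is a legitimate alternative, but two points deserve sharpening. First, the ``mass formula recorded in Section~\ref{sec:trop-toric}'' is not stated there as such; what you actually need is $\{\overline{\sT}_{\trop(Z)}\}=\{[\overline{Z}]\}$, which you must assemble from the \emph{definition} of the weights $w_\sigma=\int[D_\sigma]\wedge[\overline{Z}]$, Theorem~\ref{thm:weight_intersect}(b), the fact that the $[D_\sigma]$ span $H^{2p}$ on a smooth complete toric variety, and the identification of admissible wedge products with cohomological intersections (\cite[Proposition~4.12]{BH}, used in Step~5). Second, the reduction to a projective $X$ is not a refinement in the fan sense: you first complete the fan (Theorem~\ref{thm:toric_adv}(b)), then refine to unimodular and projective (Theorem~\ref{thm:toric_adv}(a),(c)), and finally transfer convergence back via restriction to an open toric subvariety and push-forward along the proper birational refinement map. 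The paper's support-based route avoids the K\"ahler hypothesis and this reduction entirely, working directly on any smooth $X$ in which $\overline{Z}$ is a tropical compactification.
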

Theorem~\ref{thm:intro_toric} provides a vivid picture of dynamical tropicalisation which is illustrated in Figure~\ref{fig:first_fig}. 
In Subsection~\ref{sec:Kapranov}, we observe that the dynamical tropicalisation of Poincar\'e--Lelong equation, see Theorem~\ref{thm:LP}, yields the following dynamical version of Kapranov's theorem, which informally states that the tropicalisation of an algebraic hypersurface $V(f)$ coincides with the tropical variety of the tropicalisation of the polynomial $f$:

\begin{mtheorem}\label{thm:intro_Kapranov}
Let $V\subseteq (\C^*)^n,$ be an irreducible algebraic hypersurface, given as the variety of the polynomial $f(z)= \sum_{\alpha} c_{\alpha} z^{\alpha}\in \C[z]$. We have that
$$m^{-1} \Phi_m^*[V(f)] \longrightarrow \sT_{V_{\trop}(\mathfrak{q})},$$
where $\mathfrak{q}= \max_{\alpha}\{ \langle -\alpha ,~\cdot~ \rangle \}:\R^n \longrightarrow \R,$ is the \emph{tropicalisation of} $f$ and ${V_{\trop}(\mathfrak{q})}$ is the \emph{tropical variety associated} to $\mathfrak{q}$; see Definition \ref{def:tropical_poly}.
\end{mtheorem}

\vskip 2mm
At the beginning of the introduction, we mentioned Brolin's theorem and some generalisations. Let us now recall an equidistribution conjecture of Dinh and Sibony and some of the known special cases. This will allow for viewing Theorems~\ref{thm:main_intro} and \ref{thm:intro_toric} within the larger context of complex dynamics. For an overview of the current trends in Complex Dynamics in Higher Codimensions see Dinh's ICM 2018 survey \cite{Dinh-ICM}, and Dujardin's ICM 2022 survey \cite{Dujardin-ICM} on Geometric Methods in Holomorphic Dynamics. \vskip 2mm
Let $\mathscr{H}_d(\P^n)$ denote the set of holomorphic endomorphisms of degree $d$ on $\P^n,$ and assume that $d\geq 2.$  The \emph{Green current} of $f \in \mathscr{H}_d(\P^n),$ is defined by 
  $$\sT_{f}:= \lim_{k\to \infty} \frac{1}{d^{k}} (f^k)^*(\omega),$$ where $\omega$ is the Fubini--Study form  cohomologous to a hyperplane in $\P^n.$  
\begin{mconjecture}[Dinh--Sibony \cites{Dinh-Sibony-distribution,Dinh-ICM}]
  For any $f \in \mathscr{H}_d(\P^n)$, any integer $p$ with $1 \leq p\leq n-1,$ and \emph{generic} subvariety $Z\subseteq\P^n$ of dimension $p$, we have the weak convergence of positive closed currents
  $$
  \frac{1}{\deg Z}\frac{1}{d^{(n-p)k}} (f^k)^*[Z]\longrightarrow \sT_f^{n-p}, \quad \text{as $k\to \infty$},
  $$
  where $\sT_f^{n-p} = \sT_f \wedge \dots \wedge \sT_f$, is the $(n-p)$-fold \emph{wedge product} of the Green current of $f,$ and in particular, the limit only depends on $f.$ 
\end{mconjecture}
Here are the special known cases of the above conjecture. 
  \begin{itemize}
  \item [(a)] The case $p=0$ was shown by Forn\ae ss and Sibony \cite{Fornaess--Sibony-higherdim}, Briend and Duval \cite{Briend--Duval}, Dinh and Sibony \cite{Dinh--Sibony-dyn-allure}.
  \item [(b)] The case $p=n-1$ was proved by Forn\ae ss and Sibony for generic maps \cite{Fornaess--Sibony-ComplexDynII}, and by Favre and Jonsson for any map in dimension $2$ in \cite{Favre--Jonsson-Brolin}.
  \item [(c)] The case $p=n-1$ was proved  by Dinh and Sibony in \cite{Dinh--Sibony-EquiGreen}.
  \item [(d)] In any dimension for generic maps $f \in \mathscr{H}_d(\P^n)$ by Dinh and Sibony in \cite{Dinh-Sibony:superpot}.
\end{itemize}
The above conjecture also predicts an exponential rate of convergence which is studied in several works, see for instance \cite{Dinh-Sibony:superpot} and \cite{Taflin}. There are also non-archimedean equidistribution theorems and we refer the reader to works Rivera-Letelier \cite{Rivera-Letelier}, Baker and Rumley \cite{Baker--Rumely}, and to Johnsson's comprehensive survey of results and references \cite{Jonsson-survey}.
\vskip 2mm
We remark further that
  \begin{itemize}
\item [(a)] The map $\Phi_d$ is not a generic map in $\mathscr{H}_d(\P^n)$ in the sense of Item (d) above.  Furthermore, comparing our result to the above conjecture implies that generic subvarieties of the projective space, up to the degree, have the same tropicalisation. This is not a contradiction, in fact, it is the main result of \cite{Romer-Schmitz} for which we provide two other explanations in Subsection~\ref{sec:no-contradiction} and will observe that the \emph{Julia sets} of various orders of $\Phi_m$ relate to the support of \emph{Bergman fan of uniform matroids.} 
  
  \item [(b)] The map $\Phi_m$ is indeed versatile. This map was already employed in \cite{Rash} to find the \emph{logarithmic indicators} of \emph{plurisubharmonic functions} with logarithmic growth. Moreover, Fujino in \cite{Fujino} used  $\Phi_{m}$  to prove certain vanishing theorems on toric varieties; 
  
  \item [(c)] In Subsection~\ref{sec:Kapranov}, we see that $\Phi_m^*$, after a normalisation, applied to Lelong--Poincar\'e equation~\ref{thm:LP},  yields a version of Kapranov's theorem, where an analogue of Maslov dequantisation naturally appears; see Section~\ref{sec:trop_algebra}. Finally, it is alluded in the proof of Theorem~\ref{thm:main_conv} that $\Phi_m^*,$ after a normalisation preserves the Chow cohomology classes. We will gather further cohomological implications of the latter fact in a subsequent article \cite{Babaee-Gualdi}. 
  
  
   \end{itemize}
 
  In Sections~\ref{sec:prem_currents}, \ref{sec:trop_algebra}, \ref{sec:trop_currents} and \ref{sec:trop-toric} of this manuscript, we provide the background required in proving our main theorems. The entirety of Section~\ref{sec:main_proof} is devoted to the proof of Theorem~\ref{thm:main_intro}. In Section~\ref{sec:apps} we prove Theorems~\ref{thm:intro_toric} and \ref{thm:intro_Kapranov} and discuss the implication of the Dinh--Sibony Conjecture in the case of the map $\Phi_m$.

\section{Preliminaries of the Theory of Currents}\label{sec:prem_currents}
The content of this section is extracted from Demailly's book \cite{DemaillyBook1}, which has always been generously publicly available.  
\vskip 2mm
Let $X$ be a complex manifold of dimension $n$. For a non-negative integer $k$, we consider the space of smooth complex differential forms of degree $k$ with compact support, denoted by $\mathscr{D}^{k}(X),$ endowed with the inductive limit topology. The \textit{topological dual} to $\mathscr{D}^{k}(X),$ constitutes the space of currents of dimension $k$,
\[
\mathscr{D}'_{k}(X):=\big(\mathscr{D}^{k}(X)\big)',
\]
that is, the space of all continuous linear functionals on $\mathscr{D}^{k}(X)$. Hence, a current $\sT \in \mathscr{D}'_{k}(X)$ acts on any form $\varphi \in \mathscr{D}^{k}(X)$ and yields $\langle \sT, \varphi \rangle \in \C.$
\vskip 2mm
\noindent
We note that when $k=0,$ the space of $k$-currents is just the space of distributions as in the theory of partial differential equations. The \emph{support} of $\sT$, denoted by $\supp{\sT}$, is the smallest closed subset $S\subseteq X,$ such that $\sT$ vanishes on its complement. A $k$-dimensional current $\mathscr{T}$ is \emph{the weak limit} of a sequence of $k$-dimensional currents $\mathscr{T}_i$ if
\[
\lim_{i \to \infty} \langle \mathscr{T}_i, \varphi \rangle = \langle \mathscr{T}, \varphi \rangle, \ \ \text{for all $\varphi \in \mathscr{D}^{k}(X)$}.
\]
We say that $\mathscr{T}$ is a \emph{cluster value} of a sequence $\mathscr{T}_i$ if $\sT$ is the weak limit of a subsequence of $\sT_i.$ 

\begin{remark}\label{rem:supp_haus}
Note that if $\sT$ is the weak limit  of the sequence $\mathscr{T}_i,$ and the sets $\supp{\sT_i}$ converge in the Hausdorff metric to the set $S,$ then $\supp{\sT} \subseteq S.$ To see this, assume that $z\notin S,$ then for any large $i\gg 0,$ $\sT_i$'s vanish in a small neighbourhood of $z,$ and therefore $z\notin \supp{\sT}$. As an example for the strict inclusion  $\supp{\sT} \subsetneq S,$ for  any $z\in X,$ consider the weak limit $\sT_i:=  {i}^{-1}\delta_{z}\longrightarrow 0.$ We have $S= \supp{\sT_i}= \{z\},$ and $\supp{\sT}=\varnothing.$
\end{remark}
The exterior derivative of a $k$-dimensional current $\mathscr{T}$ is the $(k-1)$-dimensional current $d\mathscr{T}$ defined by
\[
\langle d\mathscr{T},\varphi \rangle = (-1)^{k+1} \langle \mathscr{T}, d\varphi \rangle, \quad \varphi \in \mathscr{D}^{k-1}(X).
\]
The current $\mathscr{T}$ is called \emph{closed} if $d\sT = 0$. The duality of currents and forms with compact support induces the following decompositions for the bidegree and bidimension
\[
\mathscr{D}^k(X)=\bigoplus_{p+q=k} \mathscr{D}^{p,q}(X), \quad \mathscr{D}'_k(X)=\bigoplus_{p+q=k} \mathscr{D}'_{p,q}(X).
\]
The space of smooth differential forms of bidegree $(p,p)$ contains the cone of \textit{positive differential forms}. By definition, a smooth differential $(p,p)$-form $\varphi$ is \emph{positive} if
\[
\text{$\varphi(x){\upharpoonright}_S $ is a nonnegative volume form for all complex $p$-planes $S \subseteq T_x X$ and $x \in X$}.
\]
A current $\mathscr{T}$ of bidimension $(p,p)$ is \emph{positive} if
\[
\langle \mathscr{T}, \varphi \rangle \ge 0 \ \ \text{for every positive differential $(p,p)$-form $\varphi$ on $X$}.
\]
\vskip 2mm
An important class of positive currents on $X$ is obtained by integrating along the complex analytic subsets of $X,$ giving rise to \textit{integration currents.} More precisely, if $Z$ is a $p$-dimensional complex analytic subset of $X$, then by Lelong's theorem \cite[Theorem III.2.7]{DemaillyBook1} the {integration current} along $Z$ is the well-defined $(p,p)$-dimensional current given by 
\[
\big\langle [Z], \varphi \big\rangle = \int_{Z_{\text{reg}}} \varphi, \quad \varphi \in \mathscr{D}^{p,p}(X).
\]
\begin{remark}
For simplicity, we only mention the notion of positive currents. To review \textit{weakly positive, positive} and \textit{strongly positive} forms and currents see \cite{DemaillyBook1}.
\end{remark}

Let us assume further that $X$ is a K\"ahler manifold, with the K\"ahler form $\omega$.
One can check that all positive currents have signed measure coefficients. Accordingly, for a positive $(p,p)$-dimensional current $\sT$ the \emph{trace measure} of $\sT$ is defined as $\sT\wedge \omega^{p}.$ Note that the trace measure of any positive current is a positive measure. The local mass of $\sT$ on a Borel set $K\subseteq X,$ with respect to $\omega,$ is given by
$$
\lVert \sT \rVert_{K} = \int_X \mathbbm{1}_K \sT \wedge \omega^p.
$$
\vskip 2mm
The differentials $\partial, \bar{\partial}$ and the de~Rham's exterior derivative are defined on currents by duality and we also have $d = \partial + \bar{\partial},$ and set $d^c = \frac{\partial - \bar{\partial}}{2i\pi},$ to obtain $dd^c = \frac{\partial \bar{\partial}}{i\pi}.$ An important family of $(n-1,n-1)$-dimensional, or $(1,1)$-bidegree, positive currents are currents of the form $dd^c \psi$ where $\psi$ is a \textit{plurisubharmonic} function. Recall that a function $\psi:\Omega \to [-\infty, \infty)$, for $\Omega\subseteq \C^n$ an open set, is called plurisubharmonic if
\begin{itemize}
  \item [(a)] $\psi$ is not identically $-\infty$ in any component of $\Omega;$
  \item [(b)] $\psi$ is upper semicontinuous;
  \item [(c)] for every complex line $L\subseteq \C^n,$ $\psi_{{\upharpoonright}_{\Omega \cap L}}$ is subharmonic on $\Omega \cap L.$
\end{itemize}
\vskip 2mm
\noindent
It is well-known that when $u:\R^p \to \R$ is convex and increasing in each variable and $\psi_1, \dots ,\psi_p$ are plurisubharmonic, then
$$
u(\psi_1, \dots , \psi_p)
$$
is also a plurisubharmonic function. Moreover, for any holomorphic function $f:\C^n \to \C,$ $\log|f|$ is plurisubharmonic, and one has the following equality of currents, for which we provide a tropical version in Subsection~\ref{sec:trop_Lelong-Poincare}.
\begin{theorem}[Lelong--Poincar\'e Equation]\label{thm:LP}
Let $f$ be a non-zero meromorphic function on $X,$ and let $\sum m_j Z_j$ be the divisor of $f.$ The function $\log|f|$ is locally integrable on $X,$ and
$$
dd^c \log|f| = \sum m_j [Z_j].
$$
\end{theorem}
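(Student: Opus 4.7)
The statement is local on $X$, so I would fix a coordinate chart $U \cong \Omega \subseteq \C^n$ and argue there. A meromorphic $f = g/h$ decomposes via $\log|f| = \log|g| - \log|h|$, so by linearity of $dd^c$ and of the divisor map it suffices to treat the case of holomorphic $f$. Local integrability of $\log|f|$ is then immediate, since $\log|f|$ is plurisubharmonic and any plurisubharmonic function lies in $L^1_{\mathrm{loc}}$ unless identically $-\infty$ on a component, which $f \not\equiv 0$ rules out. Consequently the distributional expression $dd^c \log|f|$ makes sense as a closed positive $(1,1)$-current: positivity is the general fact $dd^c \psi \geq 0$ for $\psi$ plurisubharmonic, and closedness follows from $d^2 = 0$ by duality.

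Next I would show $\supp{dd^c \log|f|} \subseteq Z := \{f = 0\}$. Outside $Z$, each point admits a neighbourhood on which a holomorphic branch of $\log f$ exists, so $\log|f| = \mathrm{Re}\,\log f$ is pluriharmonic and $dd^c \log|f| = 0$ there. Since $Z$ is a hypersurface, the support/structure theorem for closed positive $(1,1)$-currents (a special case of Siu's decomposition theorem; see \cite{DemaillyBook1}) implies
$$
dd^c \log|f| \;=\; \sum_j \nu_j\, [Z_j],
$$
where $Z_j$ are the irreducible components of $Z$ and $\nu_j \geq 0$ are constants.

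The remaining task is to identify $\nu_j = m_j$, the order of vanishing of $f$ along $Z_j$. I would pick a smooth point $z_0 \in Z_j$ lying outside the other components and outside the singular locus of $Z_j$. By the Weierstrass preparation theorem, after an adapted local change of coordinates $(w_1, \dots, w_n)$ there is a polydisc neighbourhood of $z_0$ in which $f = u \cdot w_1^{m_j}$ with $u$ a nowhere vanishing holomorphic unit. Since $\log|u|$ is pluriharmonic it contributes nothing to $dd^c$, and the problem reduces to the one-variable identity $dd^c \log|w_1| = [\{w_1 = 0\}]$. I would verify this by pairing against a test $(n-1,n-1)$-form, applying Fubini in the $w_1$-direction to collapse the computation onto fibres, and invoking Green's formula on a disc punctured at the origin, then letting the puncture radius shrink. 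The main obstacle is this final limit step: controlling the boundary terms arising from Stokes' theorem as the puncture radius tends to zero, and checking that the multiplicity extracted is intrinsically $m_j$ rather than an artefact of the coordinate choice.
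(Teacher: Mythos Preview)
The paper does not supply its own proof of this statement: Theorem~\ref{thm:LP} is recorded in Section~\ref{sec:prem_currents} as background extracted from \cite{DemaillyBook1}, and is cited rather than proved. So there is no in-paper argument to compare against.

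That said, your outline is correct and is essentially the standard proof one finds in \cite{DemaillyBook1}. One remark: invoking the support/structure theorem (or Siu decomposition) to write $dd^c\log|f|=\sum_j\nu_j[Z_j]$ is logically valid but heavier than necessary. The more elementary route, which is also the one in \cite{DemaillyBook1}, goes directly from Weierstrass preparation: locally near a smooth point of $Z_j$ you have $f=u\,w_1^{m_j}$, so $dd^c\log|f|=m_j\,dd^c\log|w_1|$, and the one-variable computation already yields the full equality on a dense open subset of each $Z_j$, which then extends across the singular and intersection loci because both sides are closed currents with locally finite mass and the exceptional set has real codimension at least two in the support. This avoids appealing to a structure theorem that is itself deeper than the Lelong--Poincar\'e equation. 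Your identification of the main technical point---the boundary control in the one-variable Green/Stokes argument---is accurate.
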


Given an analytic subset  $E\subseteq X$, and a positive closed $(p,p)$-current $\sT$ in $\sD'_{p,p}(X \setminus E)$ it is important to know when $\sT$ can be extended by zero to a closed positive current $\overline{\sT} \in \sD'_{p,p}(X).$ The theorem of El~Mir--Skoda asserts that  this extension is possible if $\sT$ has a finite mass in a neighbourhood of every point of $E$; see {\cite[III.2.3]{DemaillyBook1}}. 



\vskip 2mm
The \emph{pushforward} and \emph{pullback} of a current  are defined as a dual operation to pullback and  \emph{pushforward}  of differential forms, respectively. The pushforward of a form is defined by integrating along fibers: consider a submersion $f:X\longrightarrow X'$ between the complex manifolds $X$ and $X'$ with respective complex dimensions $m$ and $n.$ Let $\varphi$ be a differential form of degree $k$ on $X$, with $L^1_{\text{loc}}$ coefficients such that the restriction $f{\upharpoonright}_{\supp{\varphi}}$ is proper. Then,
$$f_{*}(\varphi):= \int_{z\in f^{-1}(y)}\varphi(z) \in \mathscr{D}^{k-2(m-n)}(X').$$
In this situation, for $\sT\in \mathscr{D}_{k-2(n-m)}'(X)$ one defines $f^*\sT\in \mathscr{D}_{k}'(X),$ given by
$$
\langle f^*(\sT), \varphi  \rangle := \langle \sT, f_*(\varphi) \rangle.
$$
Note that for an analytic cycle $Z$, we have $f^*[Z]= [f^{-1}(Z)]$.
\vskip 2mm
The set of seminorms 
$$|\cdot|_{\varphi}:  \sD'_{p,q}(X)\longrightarrow \R,\quad  \sT\longmapsto |\langle \sT, \varphi \rangle|,$$ 
for all $\varphi \in \sD^{p,q}(X)$, 
defines a topology on $\sD'_{p,q}(X).$ With respect to this \emph{weak topology}, the  operations 
$$
\sT \longmapsto d\, \sT,\quad \sT \longmapsto d^c\, \sT, \quad \sT\longmapsto f_* \sT, \quad \sT\longmapsto f^* \sT,
$$
are \emph{weakly continuous}. See \cite[Section I.2.C.3]{DemaillyBook1}.
\vskip 2mm
We finally recall that the wedge product or intersection of two positive closed currents is not always admissible, as we cannot always multiply measures. The leading theories are due to Bedford--Taylor \cite{Bedford--Taylor} and Demailly \cite{DemaillyRegularization} in codimension one, which we will use in Section \ref{sec:trop-toric}. 
In higher dimensions, the theory has been developed in \cites{Dinh-Sibony:superpot, Dinh-Sibony-Density}, \cite{Dinh-Nguyen-Vu} and \cite{Yger-et-al}. 

\section{Tropical Algebra and Tropical Cycles}\label{sec:trop_algebra}
 Tropical geometry can be viewed as a geometry over  \emph{max-plus} or \emph{min-plus} algebra. In this article, we choose $(\mathbb{T}, \otimes, \oplus),$ where $\T = \R\cup \{-\infty \},$ and $\otimes$ and $\oplus$ are the usual sum and maximum, respectively. These operations can be obtained from the following multiplication and addition through the \emph{Maslov dequantisation}: for $x,y \in \R$, let $x \otimes_h y := x +y$ and $x \oplus_h y := h \ln (\exp (x/h)+ \exp(y/h)),$ as $h\to 0.$ See \cite{Litvinov} for a survey of the topic  touching on questions in thermodynamics, classical and quantum physics, and probability. See also \cites{Mikh-pants, Mikh-trop-enum} and surveys \cites{Viro-Hilbert, Itenberg-Mikhalkin}  on how Maslov dequantisation and degeneration of amoebas have played a fundamental role in the application of tropical geometry to enumerative problems in algebraic geometry. 
\vskip 2mm
In comparison to Maslov dequantisation, for $z, w \in \C^*,$ we can consider $\log|zw|= \log|z|+\log|w|,$ and apply $\frac{1}{m} \Phi^*_m$ to $\log|z + w|$ to obtain:
\begin{equation*}\label{eq:phi_max}
  \frac{1}{m} \Phi^*_m \big(\log|z+w|\big) = \frac{1}{m}\log|z^m + w^m| \xrightarrow[m\to \infty]{} \max \log \{ |z|, |w|\},
\end{equation*}
in the sense of \emph{distributions} or bidegree $(0,0)$ currents. We use this observation to derive a dynamical version of Kapranov's theorem in Subsection~\ref{sec:Kapranov}. In these notes, we follow \cite{Maclagan-Sturmfels} as our main reference of tropical geometry and refer the reader  to \cites{Gian-Gian, Maclagan--Rincon, Maclagan--Rincon-scheme, Lorscheid} for  recent scheme-theoretic development of tropical algebraic geometry.

\subsection{Tropical Cycles}
Recall that a linear subspace $H\subseteq \R^n$ is called \emph{rational} if it is spanned by a subset of $\Z^n.$ A \emph{rational polyhedron} in $\R^n$ is an intersection of finitely many rational half-spaces which are defined by
$$
\{x\in \R^n: \langle m , x \rangle \geq c, \text{ for some } m\in \ZZ^n,~ c \in \mathbb{R} \}.
$$
A \emph{rational polyhedral complex} is a complex with only rational polyhedra. The polyhedra in a polyhedral complex are also called \textit{cells.} A \textit{fan} is a polyhedral complex whose cells are all cones. If any cone of a fan $\Sigma$ also belongs to  another fan $\Sigma',$ then $\Sigma$ is a \emph{subfan} of $\Sigma'.$  The \emph{dimension} of a polyhedron is the dimension of the affine subspace of the minimal dimension containing it. All the fans and polyhedral complexes considered in this article are {rational}. 

\vskip 2mm
For a given polyhedron $\sigma$ let $\text{aff}(\sigma)$ be the affine span of $\sigma,$ and $H_{\sigma}$ be the translation of $\text{aff}(\sigma)$ to the origin. Assume that $\tau$ is a face of codimension one for  the  $p$-dimensional polyhedron $\sigma,$ and $u_{\sigma / \tau }$ is the unique outward generator of the one dimensional lattice $(\Z^n \cap H_{\sigma})/(\Z^n \cap H_{\tau}).$

\begin{definition}[Balancing Condition and Tropical Cycles]\label{BalancingCondition}
Let $\mathcal{C}$ be a rational polyhedral complex of pure dimension $p.$ Assume that all the cells of $\cC$ of dimension $p$ are weighted with positive integers. We say that $\cC$  satisfies the \emph{balancing condition} at $\tau$ if
\[
\sum_{\sigma\supset \tau} w_{\sigma}~ u_{\sigma/\tau}=0, \quad \text{in } \Z^n /(\Z^n \cap H_{\tau}),
\]
where the sum is over all $p$-dimensional cells $\sigma$ in $\mathcal{C}$ containing $\tau$ as a codimension  $1$ face, and $w_{\sigma}$ is weight of $\sigma$. A weighted complex is \emph{balanced} if it satisfies the balancing condition at each of its codimension one cells. A \emph{tropical cycle} or synonymously a  \emph{tropical variety} in $\R^n$ is a balanced weighted complex with finitely many cells.
\end{definition}
For a tropical variety $\cC$ we denote its support by $|\cC|,$ which is the underlying set of the $\cC$ as a polyhedral complex. In the tropical algebra, \emph{tropical polynomials} are obtained by addition and multiplication of the variables and constants in the tropical semiring and we can associate to each tropical polynomial a tropical hypersurface:
\begin{definition}\label{def:tropical_poly}
For a tropical polynomial
\begin{align*}
  \mathfrak{q}: \R^n &\longrightarrow \R, \\
  x &\longmapsto \max \{\langle x, \alpha \rangle + c_{\alpha} \},
\end{align*}
where $\alpha \in A\subseteq \Z^n_{\geq 0},$ $|A|< \infty,$ and $c_{\alpha}\in \R,$ the associated tropical variety to $\mathfrak{q}$, denoted by $V_{\trop}(\mathfrak{q}),$ is defined by:
\begin{itemize}
  \item [(a)] as a set $$
  |V_{\trop}(\mathfrak{q})| = \{x\in \R^n: \mathfrak{q}(x)\text{~is not differentiable at~} x\};$$
  \item [(b)] the weights on an $(n-1)$-dimensional cell $\sigma$ of $|V_{\trop}(\mathfrak{q})|$ is given by the lattice length of $\alpha_1 - \alpha_2,$ where $ |\sigma| = \{x\in \R^n: \langle \alpha_1  , x \rangle +c_{\alpha_1} = \langle \alpha_2 , x \rangle +c_{\alpha_2} =\mathfrak{q}(x) \}.$
\end{itemize}
\end{definition}
Given a tropical polynomial $\mathfrak{q},$ one can verify that $V_{\trop}(\mathfrak{q})$ is indeed balanced, {\cite[Proposition 3.3.2]{Maclagan-Sturmfels}}.

\section{Complex Tropical Currents}\label{sec:trop_currents} 
Introduction of different notions of \emph{tropical currents} as bridges between the theory of currents and tropical geometry, to the author's understanding, is indebted to \cite{DemaillyHodge, Rash, Pass-Rull}: Consider $(S^1)^n$-invariant plurisubharmonic functions of the form $\Log_+^*f = f\circ \Log_+$ for a convex continuous function $f:\R^n \longrightarrow \R,$ and
$$
\Log_+: (\CC^*)^n \longrightarrow \R^n, \quad (z_1, \dots, z_n) \longmapsto (\log|z_1|, \dots , \log|z_n|).
$$
By Rashkovskii's formula \cite{Rash}, one has
$$
\int_{\Log_+^{-1}(V)} (dd^c f \circ \Log_+)^n = n!~ \textrm{MA}(f)(V),
$$
where $\text{MA}(f)$ is the \textit{real Monge-Amp\`ere measure} of $f$ and $V \subseteq \R^n$ is a Borel set. Recall that when $f$ is smooth, we have
$$
\text{MA}(f)(V) = \det(\text{Hess}(f)) \mu,
$$
where $\mu$ is the Lebesgue measure in $\R^n$ and the definition can be extended to the non-smooth case by continuity. Passare and Rullg{\aa}rd in \cite{Pass-Rull}, used a mixed form of Rashkovskii's formula to reprove the Bernstein theorem. Lagerberg in \cite{Lagerberg}, introduced the theory of \emph{superforms} and \emph{supercurrents} where he could also decompose the real Monge--Amp\`ere measures into bidegree $(1,1)$ components, $dd^\sharp f$, to obtain
$$
\int_{\Log_+^{-1}(V)} (dd^c f \circ \Log_+)^n = \int_{V \times \R^n} (dd^\sharp f)^n.
$$
Moreover, Lagerberg introduced \emph{tropical supercurrents} in codimension one, which were generalised by Gubler in \cite{Gubler} to higher codimensions. Chambert-Loir and Ducros in \cite{Chambert-Ducros}  followed by Gubler and K\"unnemann in \cite{Gubler--Kunnemann} extended supercurrents to a fully-fledged theory on Berkovich spaces, and used them in application to Arakelov theory. Further, in \cite{BHJK2}, and in \cite{Mihatsch}  intersection theory and Monge--Amp\`ere operators were investigated; see also \cites{BMPS, CGSZ, Ducros-Hrushovski-Loeser} for several far-reaching extensions of Rashkovskii's formula. In \cites{Gil-Gubler-Jell-Kunnemann}, it was proved that the push-forward of the logarithm map can be defined in order to find a correspondence between a  cone of certain $(S^1)^n$-invariant closed positive currents on a complex toric variety and closed positive supercurrents on the tropicalisation of that toric variety. Moreover, one can lift, by the pull-back of the logarithm map, the tropical currents in \cite{Gubler} to \emph{complex tropical currents}.
\vskip 2mm
Complex tropical currents were introduced in \cite{Babaee} with a geometric representation which made it  convenient for generalizing Demailly's important example of an \textit{extremal} current  that is not an integration current along any analytic set. Recall that this current is given by $dd^c \max \{x_0,x_1, x_2\} \, \circ \, \Log_+ \in \mathscr{D}'_{2,2}(\P^2),$ and equals the (complex) tropical current associated to the tropical line in $\R^2$; see \cite{DemaillyHodge}. The extremality results in \cite{Babaee} were subsequently improved and extended to toric varieties by Huh and the author in \cite{BH}, and tropical currents were used to find a non-trivial example of a positive closed current on a smooth projective toric variety which refutes a strong version of the Hodge conjecture for positive currents. Thereafter, Adiprasito and the author in \cite{Adi-Baba} proposed a family of  tropical currents which are counter-examples to the aforementioned conjecture in any dimension and codimension greater than one. Moreover, tropical currents were also used in application to higher convexity problems and Nisse--Sottile conjecture \cite{Nisse-Sottile}, as well as finding a family of peculiar currents which cannot be regularised to obtain mollified currents with smooth boundaries; see {\cite[Theorem D]{Adi-Baba}}.
\vskip 2mm
In all the above-mentioned works though, an intrinsic notion of tropicalisation of integration currents along algebraic varieties was absent, which is the topic of this article. 
\subsection{Tropical Currents}
In this subsection, we recall the definition of tropical currents and their basic properties. We refer the reader to \cite{Babaee, BH} for more details. Let $N$ be a finitely generated free abelian group, we define
\begin{eqnarray*}
T_N&:=&\text{the complex algebraic torus $\mathbb{C}^* \otimes_\mathbb{Z} N$,}\\
S_N&:=&\text{the compact real torus $S^1 \otimes_\mathbb{Z} N$,}\\
N_\mathbb{R}&:=&\text{the real vector space $\mathbb{R} \otimes_\mathbb{Z} N$.}
\end{eqnarray*}
Let $\mathbb{C}^*$ be the group of nonzero complex numbers. As before, the logarithm map is the homomorphism
\[
\text{Log}: (\mathbb{C}^*)^n \longrightarrow \mathbb{R}^n, \qquad (z_1, \dots, z_n) \longmapsto (-\log |z_1|,\dots , -\log|z_n|),
\]
and the \emph{argument map} is
\[
\text{Arg}: (\mathbb{C}^*)^n \longrightarrow (S^1)^n, \qquad (z_1, \dots, z_n) \longmapsto (z_1/|z_1|,\dots , z_n/|z_n|).
\]
For a rational linear subspace $H \subseteq \RR^n$ we have the following exact sequences:
\[
\xymatrix{
0 \ar[r]& {H \cap \mathbb{Z}^n} \ar[r]& \mathbb{Z}^n \ar[r] & \Z^n(H) \ar[r] & 0,
}
\]
where ${\Z^n(H)}:= \mathbb{Z}^n/(H \cap \mathbb{Z}^n).$ Moreover,

\[
\xymatrix{
0 \ar[r]& S_{H \cap \mathbb{Z}^n} \ar[r]& (S^1)^n = S^1\otimes_{\mathbb{Z}} \mathbb{Z}^n \ar[r] &S_{\Z^n(H)} \ar[r] & 0.
}
\]

Define
\[
\pi_H:\xymatrix{ \text{Log}^{-1}(H) \ar[r]^{\quad\text{Arg}}& (S^1)^n \ar[r]& S_{\Z^n(H)}}.
\]
One has
\[
\text{ker}(\pi_H)=T_{H \cap \mathbb{Z}^n} \subseteq (\mathbb{C}^*)^n.
\]
As a result,  when $H$ is of dimension $p,$ the set $\text{Log}^{-1}(H)$ is naturally foliated by copies of $T_{H \cap \mathbb{Z}^n}  \simeq (\C^*)^p.$
\begin{definition}
Let $H$ be a rational subspace of dimension $p,$ and $\mu$ be the Haar measure of mass $1$ on $S_{\Z^n(H)}$. We define a $(p,p)$-dimensional closed current $\mathscr{T}_H$ on $(\mathbb{C}^*)^n$ by
\[
\mathscr{T}_H:=\int_{x \in S_{\Z^n(H)}} \big[\pi_H^{-1}(x)\big] \ d\mu(x).
\]
\end{definition}
\begin{example}\label{Ex:hypersurface}
It is instructive to explicitly understand the equations of the fibers of $\sT_{H},$ where $H\subseteq \R^n,$ is a rational hyperplane given by $H=\{x\in \R^n: \langle \beta, x\rangle =0\}$ for some $ \beta\in \Z^n.$
The support of $\sT_{H}$ is given by $\Log^{-1}(H).$ Let $\beta= w_{H} \alpha,$ where $\alpha \in \Z^n$ is a \emph{primitive} vector, \textit{i.e.}, its components have greatest common divisor equal to $1$ and $w_{H} \in \Z_{>0}$. In this case, each fiber $\pi_H^{-1}(x)$ is given by the $(S^1)^n$-translations of the toric set
$$
\pi_{H}^{-1}(1) =\{z \in (\C^*)^n: z^{-\alpha}-1 = 0 \} = \{z \in (\C^*)^n: z^{\alpha_+} - z^{\alpha_-}=0 \},
$$
 where $\alpha = \alpha^+ - \alpha^-,$ and $\alpha^{\pm}\in \Z^n_{\geq 0}.$ 

We also note that the tangent space of the fiber $\pi_{H}^{-1}(1)$ at $w= (1, \dots, 1) \in (\C^*)^n$ is given by  $$T_{w}\pi_{H}^{-1}(1)= \ker \nabla (z^{-\alpha}-1)(w)= H\otimes_{\Z} \C.$$ 
It follows that if $H, H'\subseteq \R^n$ are two hyperplanes intersecting transversely in $\R^n,$  then all the fibers of $\sT_H$ and $\sT_{H'}$ intersect transversely in $(\C^*)^n.$
\end{example}
When $A$ is an affine subspace of $\mathbb{R}^n$ parallel to the linear subspace $H= A - {a}$ for $a \in A$, we define $\sT_{A}$ by translation of $\sT_H.$ Namely, we define the submersion $\pi_{A}$ as the composition
\[
\pi_{A}:\xymatrix{\text{Log}^{-1}(A) \ar[r]^{e^{a} } & \text{Log}^{-1}(H) \ar[r]^{\pi_{H} }&S_{\Z^n(H)}.}
\]
For $\sigma$ a $p$-dimensional (rational) polyhedron in $\R^n,$ we denote
\begin{align*}
  \text{aff}(\sigma) &:= \text{the affine span of $\sigma$,} \\
  \sigma^{\circ} &:= \text{the interior of $\sigma$ in $\text{aff}(\sigma)$,} \\
  H_{\sigma}&:= \text{the linear subspace parallel to $\text{aff}(\sigma)$}, \\
  N(\sigma) &:= \Z^n \slash (H_{\sigma} \cap \Z^n),
\end{align*}
and for homogeneity, $N(H):= \Z^n(H) = \Z^n \slash (H \cap \Z^n).$
\begin{definition}
Let $\mathcal{C},$ be a weighted polyhedral complex of dimension $p$. The tropical current $\mathscr{T}_{\mathcal{C}}$ associated to ${\mathcal{C}}$ is given by
$$
\mathscr{T}_{\mathcal{C}}= \sum_{\sigma} w_{\sigma} ~ \mathbbm{1}_{\Log^{-1}(\sigma^{\circ})}\mathscr{T}_{\text{aff}(\sigma)} ,
$$
where the sum runs over all $p$-dimensional cells $\sigma$ of $\cC.$
\end{definition}

\begin{theorem}[\cite{Babaee}]\label{thm:closed-balanced}
A weighted complex $\mathcal{C}$ is balanced, if and only if, $\mathscr{T}_{\mathcal{C}}$ is closed.
\end{theorem}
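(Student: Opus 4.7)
My plan is to compute $d\sT_\cC$ by applying Stokes' theorem leafwise and reducing the closedness of $\sT_\cC$ to a linear identity at each codimension-one cell $\tau$ of $\cC$. Since $\sT_{\text{aff}(\sigma)}$ is by definition the average $\int_{x \in S_{\Z^n(H_\sigma)}} [L_x]\, d\mu(x)$ of integration currents along the complex $p$-dimensional submanifolds $L_x := \pi_{\text{aff}(\sigma)}^{-1}(x) \subset \Log^{-1}(\text{aff}(\sigma))$, it is closed on $\Log^{-1}(\text{aff}(\sigma))$. For any test form $\varphi$ of real degree $2p-1$, integration by parts combined with Stokes on each leaf $L_x \cap \Log^{-1}(\sigma)$ (a complex $p$-manifold with boundary) converts the inner integral into a boundary integral over $L_x \cap \Log^{-1}(\partial\sigma)$, oriented by the outward primitive generator $u_{\sigma/\tau} \in (\Z^n \cap H_\sigma)/(\Z^n \cap H_\tau)$. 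Decomposing over codim-one faces of $\sigma$ (lower-dimensional overlaps contribute nothing to the integral) and swapping sums then yields
$$
\langle d\sT_\cC, \varphi\rangle = -\sum_\tau \sum_{\sigma \supset \tau} w_\sigma \int_{S_{\Z^n(H_\sigma)}} \int_{L_x \cap \Log^{-1}(\tau^\circ)} \varphi\, d\mu(x).
$$

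The crux is to identify the inner integral in a form that is manifestly linear in $u_{\sigma/\tau}$. In local coordinates near $\tau^\circ$ in which $H_\tau$ is a coordinate subspace and $u_{\sigma/\tau}$ points along a single argument-circle direction, the surjection $S_{\Z^n(H_\tau)} \twoheadrightarrow S_{\Z^n(H_\sigma)}$ with $S^1$ kernel $S_{(H_\sigma \cap \Z^n)/(H_\tau \cap \Z^n)}$ pushes Haar to Haar by primitivity of $u_{\sigma/\tau}$, so Fubini combined with the one-variable model $d \mathbbm{1}_{\{s \geq 0\}} = \delta_0 \, ds$ on $\C^* \cong \R_s \times S^1_\theta$ produces
$$
\int_{S_{\Z^n(H_\sigma)}} \int_{L_x \cap \Log^{-1}(\tau^\circ)} \varphi\, d\mu(x) = c\, \big\langle \sT_{\text{aff}(\tau)},\, \iota_{X_{u_{\sigma/\tau}}} \varphi \big\rangle,
$$
where $c > 0$ is a fixed normalization and $X_v$ denotes the real vector field on $(\C^*)^n$ generating the one-parameter subgroup on the argument torus associated to $v \in \Z^n$. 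Using linearity of $v \mapsto \iota_{X_v}$, the total $\tau$-contribution to $\langle d\sT_\cC, \varphi\rangle$ is $c\,\langle \sT_{\text{aff}(\tau)}, \iota_{X_{\Sigma_\tau}}\varphi\rangle$ with $\Sigma_\tau := \sum_{\sigma \supset \tau} w_\sigma u_{\sigma/\tau}$. This pairing vanishes for every $\varphi$ exactly when $\Sigma_\tau = 0$ in $\Z^n/(\Z^n \cap H_\tau)$: if $\Sigma_\tau \in H_\tau \cap \Z^n$ then $X_{\Sigma_\tau}$ is tangent to every leaf of $\sT_{\text{aff}(\tau)}$, and the pullback of $\iota_{X_{\Sigma_\tau}}\varphi$ to a $(2p-2)$-dimensional leaf vanishes because $\varphi$ itself restricts to zero there by dimension; otherwise one can choose $\varphi$ to make the pairing nonzero, since $\sT_{\text{aff}(\tau)}$ is a nonzero positive current. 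Localizing test forms near each codim-one cell then gives both directions of the equivalence.

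The main obstacle is the Fubini-Stokes identification in the second display. The computation is transparent in a local product coordinate system adapted to $\tau$, where it reduces to the model identity $d \mathbbm{1}_{\{s \geq 0\}} = \delta_0 \, ds$ on the $\C^*$-factor transverse to $H_\tau$, but one must carefully track (i) the signs coming from the outward orientation prescribed by Stokes, and (ii) that the pushforward of Haar measure under the surjection $S_{\Z^n(H_\tau)} \twoheadrightarrow S_{\Z^n(H_\sigma)}$ has multiplicity one, which is precisely ensured by the primitivity of $u_{\sigma/\tau}$ in $(\Z^n \cap H_\sigma)/(\Z^n \cap H_\tau)$. Once this local model is established, the remainder of the argument is orientation and normalization bookkeeping.
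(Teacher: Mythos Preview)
The paper does not reprove this theorem but cites \cite{Babaee} and explicitly signals that the argument there rests on the Fourier computation of Proposition~\ref{prop:haar_measures}: one tests $d\sT_\cC$ against the polar-coordinate forms $\exp(-i\langle\nu,\theta\rangle)\rho(r)\,d\theta_I\wedge dr_I$, reads off Fourier coefficients, and identifies closedness with the vanishing of certain coefficient combinations that turn out to be exactly the balancing relations at each codimension-one face. Your route is genuinely different and, as outlined, correct. Instead of Fourier analysis you apply Stokes leafwise on each truncated fiber $L_x\cap\Log^{-1}(\sigma)$, collect boundary terms at each codimension-one face $\tau$, and use fiber integration over the $S^1$-kernel of $S_{\Z^n(H_\tau)}\twoheadrightarrow S_{\Z^n(H_\sigma)}$ to rewrite each contribution as $\langle\sT_{\text{aff}(\tau)},\iota_{X_{u_{\sigma/\tau}}}\varphi\rangle$; linearity in $u_{\sigma/\tau}$ then makes the sum $\Sigma_\tau$ appear directly, and your observation that $X_v$ tangent to the leaves (i.e.\ $v\in H_\tau\cap\Z^n$) annihilates the pairing is exactly what shows the answer depends only on $\Sigma_\tau$ modulo $H_\tau\cap\Z^n$, matching the quotient in Definition~\ref{BalancingCondition}. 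The details you flag---the Stokes orientation versus the ``outward'' convention for $u_{\sigma/\tau}$, and primitivity guaranteeing the Haar pushforward has multiplicity one---are precisely the points that require care, but neither is a real obstruction once one works in adapted lattice coordinates. What each approach buys: the Fourier method is more computational and dovetails with the later use in this paper (Lemma~\ref{lem:fourier}), where one must kill nonzero Fourier modes of a limit current; your Stokes/contraction method is more geometric, makes the role of $u_{\sigma/\tau}$ as the transverse boundary direction transparent, and is closer in spirit to how the analogous equivalence is proved for Lagerberg supercurrents.
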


The reminder of this section is devoted to proving the statements that will be useful in later sections. The idea of the following technical lemma is extensively used in the proof of Theorem~\ref{thm:closed-balanced} and extremality results in \cites{Babaee,BH}. For a measure $\eta$ on the $k$-dimensional compact torus $(S^1)^k$ let us denote by $\widehat{\eta}(\ell_1, \dots, \ell_k)$ its $(\ell_1, \dots, \ell_k)$-th Fourier measure coefficients.
\begin{proposition}\label{prop:haar_measures}
For a $p$-dimensional affine plane $A$ parallel to a rational linear space, assume that the current $\sT_A(\eta)$ is given by
$$
\sT_{A}(\eta)=\int_{x \in S_{N(H)}} \big[ \pi_{A}^{-1}(x)\big] \ d\eta(x),
$$
for a positive measure $\eta.$ Then, all the Fourier measure coefficients $\widehat{\eta}(\nu),$ for $\nu \in \Z^n,$ can be determined by the action of the current $\sT_{A}(\eta)$ on the $(p,p)$-differential forms of type
\begin{equation*}\label{eq:fourier-forms}
\omega= \exp(-i\langle \nu , \theta \rangle ) \rho(r) d\theta_{I} \wedge dr_{I},
\end{equation*}
where $I \subseteq [n],$ with $|I|=p,$ $\theta= (\theta_1, \dots, \theta_n)$ and $r= (r_1, \dots, r_n)$ are polar coordinates, and $\rho:\RR^n \to \RR$ is a smooth function with compact support.
\end{proposition}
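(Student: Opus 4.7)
The plan is to unfold the definition of $\sT_A(\eta)$ by Fubini and exploit the torus structure that is built into the fibers of $\pi_A$. By translating the logarithmic coordinates I may first reduce to the case $A = H$; this only modifies the eventual formula by fixed phases and does not affect which Fourier coefficients can be recovered. In polar coordinates $(r,\theta)$ on $\Log^{-1}(H) \simeq H \times (S^1)^n$, each fiber $\pi_H^{-1}(x)$ for $x \in S_{N(H)}$ is the product of $H$ in the radial variables with the coset $p_H^{-1}(x)$ of the subtorus $S_{H \cap \Z^n}$ in the argument variables, where $p_H : (S^1)^n \to S_{N(H)}$ is the quotient map from the short exact sequence recalled in Section~\ref{sec:trop_currents}. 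Before unwinding the pairing I choose $I \subseteq [n]$ with $|I| = p$ such that the projection $\R^n \to \R^I$ restricts to an isomorphism $H \to \R^I$; for any other choice of $I$ the form $d\theta_I \wedge dr_I$ pulls back to zero on the fibers and gives no information.

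With this choice the pairing unfolds as
\[
\langle \sT_A(\eta), \omega \rangle \;=\; \int_{S_{N(H)}} \left( \int_{\pi_H^{-1}(x)} e^{-i\langle \nu, \theta \rangle} \rho(r) \, d\theta_I \wedge dr_I \right) d\eta(x),
\]
and the inner integral factors as a radial integral $\int_H \rho(r)\, dr_I$ times an angular integral over the coset $p_H^{-1}(x)$. For the angular part the key observation is that $\theta \mapsto e^{-i\langle \nu, \theta \rangle}$, viewed as a character of $(S^1)^n$, restricts to a character of the subtorus $S_{H \cap \Z^n}$; by orthogonality of characters this restriction integrates to zero over the coset unless $\nu$ is trivial on $H \cap \Z^n$, that is, unless $\nu$ lies in the dual lattice $(H \cap \Z^n)^\perp \subseteq \Z^n$. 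When it does, the character descends to a character $\bar \nu$ of $S_{N(H)}$, and the angular integral evaluates to $c_I \, e^{-i\langle \bar \nu, x \rangle}$ for an explicit non-zero constant $c_I$ depending only on the chosen projection. Integrating this against $d\eta(x)$ produces exactly the Fourier coefficient $\widehat{\eta}(\bar \nu)$.

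Combining these steps gives, for $\nu \in (H \cap \Z^n)^\perp$,
\[
\langle \sT_A(\eta), \omega \rangle \;=\; c_I \left( \int_H \rho(r)\, dr_I \right) \widehat{\eta}(\bar \nu),
\]
while the pairing vanishes otherwise (so in particular $\widehat{\eta}(\nu) = 0$ by convention for $\nu \notin (H\cap\Z^n)^\perp$). Since $\rho$ is an arbitrary smooth function with compact support, the radial integral can be made any prescribed non-zero value; varying $\nu$ over $(H \cap \Z^n)^\perp$ then recovers every Fourier coefficient of $\eta$, which is what the proposition asserts. The step requiring the most care is the bookkeeping of lattice indices that appear when one identifies $H$ with $\R^I$ and $S_{H \cap \Z^n}$ with its image in $(S^1)^I$: these are finite covering degrees that affect the value of $c_I$ but not whether the Fourier coefficients are captured, and so they do not obstruct the argument.
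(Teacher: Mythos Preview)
Your proof is correct and follows essentially the same route as the paper's sketch: reduce to the linear case, pair with the Fourier form, and show the result is $\widehat{\eta}$ at the descended character times a nonzero constant that vanishes unless $\nu$ annihilates $H\cap\Z^n$. The only cosmetic difference is that the paper fixes a $\Z$-basis $\{w_1,\dots,w_p,u_1,\dots,u_{n-p}\}$ of $\Z^n$ and writes the constant explicitly as $\det_I(w_1,\dots,w_p)$ and the descended character as $(\langle u_1,\nu\rangle,\dots,\langle u_{n-p},\nu\rangle)$, whereas you phrase the same computation in terms of characters of the quotient torus; one minor slip is your remark that ``for any other choice of $I$'' the form pulls back to zero, which should read ``for any $I$ for which the projection $H\to\R^I$ is not an isomorphism''.
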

\begin{proof}[Sketch of Proof.]
Without loss of generality, we can assume that $A$ is a linear subspace of $\R^n$. Let us choose $\{w_1, w_2, \dots, w_p\}$ a $\ZZ-$basis for $\ZZ^n \cap A $ and extend it to a $\ZZ$-basis of of $\ZZ^n,$ $B:=\{w_1, \dots, w_p, u_1, \dots, u_{n-p}\}.$ One can see that, compare to \cite[Equation (3.3.20)]{Babaee}, with the right choice of $\rho$,
$$
\langle \sT_{A}(\eta), \omega \rangle = \delta_{\{\nu \in A^{\perp}\}} ~\widehat{\eta}(\langle u_1, \nu \rangle, \dots , \langle u_{n-p}, \nu \rangle) ~\text{det}_I(w_1, \dots , w_p),
$$
where $\text{det}_I$ is the minor of rows corresponding to $I\subseteq [n],$ and
$$\delta_{\{\nu \in A^{\perp}\}} =
\begin{cases}
1 & \text{if } \nu \in A^{\perp};\\
0 & \text{otherwise,}
\end{cases}$$
where $A^{\perp}$ is the orthogonal complement of $A$ in $\R^n$ with respect to usual dot product. As $B$ is a $\ZZ$-basis, the tuple $(\langle u_1, \nu \rangle, \dots , \langle u_{n-p}, \nu \rangle)$ can assume any value in $\Z^{n-p}$ and therefore $\eta$ can be fully understood.

\end{proof}

 We call a current on $(\C^*)^n$, $(S^1)^n-$invariant if it is invariant under the induced action of multiplication by any $x\in (S^1)^n.$ The following proposition is comparable to {\cite[Example 7.1.7]{Gil-Gubler-Jell-Kunnemann}}. See also \cite[Theorem 3.6]{CGSZ} for an interesting related result in codimension one.

\begin{proposition}\label{prop:charac}
Assume that $\sT\in \sD'_{p,p}((\C^*)^n)$ is a closed positive $(S^1)^n$-invariant current whose support is given by $\Log^{-1}(|\cC|)$, for a polyhedral complex $\cC\subseteq \R^n$ of pure dimension $p$. Then $\sT$ is a tropical current.

\end{proposition}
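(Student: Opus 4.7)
The strategy is to work cell-by-cell on $\cC$, produce a positive measure via push-forward, and then invoke the $(S^1)^n$-invariance to pin it down as a Haar measure. First I would discard non-maximal cells: if $\tau\in\cC$ has $\dim\tau<p$, then $\Log^{-1}(\tau^\circ)$ has real dimension strictly less than $n+p$, and a standard Federer-type support argument shows that a closed positive $(p,p)$-current cannot be supported on a set of too small real Hausdorff dimension. Hence $\sT$ is concentrated on $\bigsqcup_\sigma\Log^{-1}(\sigma^\circ)$ over the maximal cells of $\cC$, and it suffices to prove that $\sT|_{\Log^{-1}(\sigma^\circ)}=w_\sigma\mathbbm 1_{\Log^{-1}(\sigma^\circ)}\sT_{\aff(\sigma)}$ for each maximal $\sigma$ and some constant $w_\sigma\ge 0$.

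Fix a maximal cell $\sigma$, set $U:=\Log^{-1}(\sigma^\circ)$, $A:=\aff(\sigma)$, $H:=H_\sigma$, and adopt the adapted $\Z$-basis of $\Z^n$ used in the proof of Proposition~\ref{prop:haar_measures}. The submersion $\pi_A$ has fibres of complex dimension $p$, matching the bidimension of $\sT$, so the push-forward $\eta:=\pi_{A*}(\sT|_U)$ is a well-defined positive Radon measure on the relevant open subset of $S_{N(H)}$. The central structural claim is $\sT|_U=\mathbbm 1_U\sT_A(\eta)$. I would justify it along two complementary lines: at a generic point of $U$, the combination of bidegree $(p,p)$, positivity, and the support condition forces the tangent plane of $\sT$ to coincide with the unique complex $p$-plane $H\otimes_\Z\C\subseteq T_zU$, so $\sT$ is tangent to the foliation of $\Log^{-1}(A)$ by fibres of $\pi_A$, and closedness together with the slicing theorem along $\pi_A$ recovers the fibred integral $\int[\pi_A^{-1}(x)]\,d\eta(x)$. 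Equivalently, one can match $\sT|_U$ against $\mathbbm 1_U\sT_A(\eta)$ by testing on the family of forms of Proposition~\ref{prop:haar_measures}: the $(S^1)^n$-invariance of $\sT$ kills every Fourier mode $\nu\ne 0$ in these tests, while the $\nu=0$ datum, read through the formula of Proposition~\ref{prop:haar_measures}, suffices to identify the two currents.

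Finally, the $(S^1)^n$-invariance of $\sT$ descends through the push-forward to $(S^1)^n$-invariance of $\eta$ under the induced rotation action on $S_{N(H)}$, and by uniqueness of Haar measure $\eta=w_\sigma\mu$ for a nonnegative constant $w_\sigma$. This yields $\sT|_U=w_\sigma\mathbbm 1_U\sT_A$ on each maximal cell, and assembling these identities gives $\sT=\sum_\sigma w_\sigma\mathbbm 1_{\Log^{-1}(\sigma^\circ)}\sT_{\aff(\sigma)}=\sT_\cC$ for the weighted polyhedral complex with weights $\{w_\sigma\}$; balancing then follows from Theorem~\ref{thm:closed-balanced} together with $d\sT=0$, so $\sT$ is a tropical current. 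The main obstacle is the structural identification $\sT|_U=\mathbbm 1_U\sT_A(\eta)$: the fibres of $\pi_A$ share the real dimension of $\sT$, so the slicing theorem must be applied in its critical-dimension form, and one must carefully use positivity and closedness to upgrade an abstract disintegration into the clean fibred-integral representation.
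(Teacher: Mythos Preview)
Your overall strategy---reduce to maximal cells, obtain a fibred-integral representation over each cell, then use $(S^1)^n$-invariance to force the fibre measure to be Haar---matches the paper's exactly. The difference lies in how you justify the structural identification $\sT|_U=\mathbbm 1_U\sT_A(\eta)$, which you correctly flag as the crux. The paper dispatches this in one line by invoking Demailly's second theorem of support \cite[III.2.13]{DemaillyBook1}: a closed positive $(p,p)$-current supported on a locally finite union of $p$-dimensional complex submanifolds is automatically a sum of integration currents along those submanifolds with measure coefficients. Applied to $\Log^{-1}(|\cC|)$, whose open pieces $\Log^{-1}(\sigma^\circ)$ are foliated by the $p$-dimensional complex fibres $\pi_{\aff(\sigma)}^{-1}(x)$, this yields the representation $\sT=\sum_\sigma\int[\pi_{\aff(\sigma)}^{-1}(x)]\,d\eta_\sigma(x)$ directly, bypassing both your Federer-type dimension argument for the low-dimensional skeleton and your critical-dimension slicing concern.

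Your two proposed routes to the same conclusion (tangent-plane rigidity plus slicing, or Fourier testing via Proposition~\ref{prop:haar_measures}) are morally what goes into the proof of Demailly's support theorem, so you are not wrong---just working harder than necessary. In a write-up you should simply cite the support theorem and move on; the Haar-measure step via descent of the $(S^1)^n$-action to $S_{N(H)}$ is then exactly as you describe.
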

\begin{proof}
By Demailly's second theorem of support, \cite[III.2.13]{DemaillyBook1}, any positive current with support $\Log^{-1}(|\cC|)$ can be presented as
$$
\sT = \sum_{\sigma \in \cC} \int_{x \in S_{N(H)}} \mathbbm{1}_{\Log^{-1}(\sigma^{\circ})}\big[\pi_{\text{aff}(\sigma)}^{-1}(x)\big] \ d\eta_{\sigma}(x),
$$
for a unique complex measure $\eta_{\sigma}$ for each $\sigma$. We need to see that the measures $\eta_{\sigma}$ are indeed Haar measures, but this is simple, since by definition of the fibers $\pi_A$ each fiber of $\pi_A$ is a translation of the kernel by the action of $(S^1)^n$, and if $\sT$ is invariant under $(S^1)^n$ then each $\eta_{\sigma}$ has to be invariant under the translation in the quotient $S(N(H))= S_{\Z^n/(H\cap \Z^n)}.$
\end{proof}

Before ending this subsection let us recall the notion of refinement.

\begin{definition}\label{def:refinement}
A $p$-dimensional weighted complex $\cC'$ is called a refinement of $\cC,$ if $|\cC|= |\cC'|,$ and each cell $\sigma'\in \cC'$ of dimension $p$ (and non-zero weight) is contained in some $p$-dimensional cell $\sigma \in \cC$ with
$$
w_{\sigma'}(\cC') = w_{\sigma}(\cC).
$$
\end{definition}
It is easy to check that if $\cC$ and $\cC'$ have a common refinement then their tropical currents $\sT_{\cC}$ and $\sT_{\cC'}$ coincide; see {\cite[Section 2.6]{BH}}.

\section{The Toric Setting}\label{sec:trop-toric}
\subsection{The Multiplication Map on Toric Varieties}
In this subsection, we intend to analyse the dynamics of $\Phi_{m}$ on the points of toric varieties. The reader may consult \cite{Cox-Little-Schenck} for a thorough and the notes \cite{Brasselet} for a quick introduction to the theory of toric varieties. 
\vskip 2mm
As before, let $N$ be a free abelian group of rank $n$, $M= \Hom_{\Z}(N,\Z),$
$$
\quad N_{\R}= \R \otimes_{\Z} N,\quad T_N = \C^* \otimes_{\Z} N, \quad S_N = S^1 \otimes_{\Z} N.
$$
For a rational cone $\sigma,$ we let $N_{\sigma}$ be the sublattice of $N$ spanned by the points in $\sigma \cap N,$ and $N(\sigma)= N \slash N_{\sigma}.$ Moreover,
\begin{align*}
   M_{\R} &:= \R \otimes_{\Z} M, \\
  \sigma^{\vee} &:= \{u \in M_{\R}: \langle u , v \rangle \geq 0, \text{ for all } v \in \sigma \}, \\
  \sigma^{\perp} &:= \{u \in M_{\R}: \langle u , v \rangle = 0, \text{ for all } v \in \sigma \}.
\end{align*}
\vskip 2mm
Let us fix a rational fan $\Sigma \subseteq N_{\R},$ and recall that $\Sigma$ defines a toric variety $X_{\Sigma}$ which is obtained by gluing the affine varieties $U_{\sigma}:= \Spec~\C[\sigma^{\vee} \cap M]$ according to the fan structure in~$\Sigma.$ 
\vskip 2mm
There is a \emph{Cone-Orbit Correspondence} between the cones in $\Sigma$ and orbits of the continuous action of $T_N$ on $X_{\Sigma}.$ Namely, for any integer $0\leq p \leq n,$
\begin{align*}
\{\text{$p$-dimensional cones $\sigma$ in $\Sigma$}\} &\longleftrightarrow
\{\text{$(n-p)$-dimensional $T_{N}$-orbits in $X_{\Sigma}$}\} \\
\sigma & \longleftrightarrow \cO(\sigma)\simeq T_{N(\sigma)}.
\end{align*}
Therefore, the $T_N$-orbits give rise to the partition 
$$
X_{\Sigma} = \bigcup_{\sigma \in \Sigma} \cO(\sigma).
$$

Each point of any affine piece $U_{\sigma}$ corresponds to a semigroup homomorphism
$$
\sigma^{\vee} \cap M \longrightarrow \C,
$$
where $\C$ is considered as a semigroup under multiplication. The \emph{distinguished point} $z_{\sigma}\in \cO(\sigma)\subseteq U_{\sigma},$ is then  defined to be the unique point that corresponds to the semigroup homomorphism
$$
u \in \sigma^{\vee} \cap M ~\longmapsto
\begin{cases}
1 & \text{if $u \in \sigma^{\perp}$}, \\
0 & \text{if $u \notin \sigma^{\perp}$}.
\end{cases}
$$
The distinguished point of the open torus $(\C^*)^n,$ for instance, is just $(1,\dots, 1).$
We also have a Lie group isomorphism $T_{N(\sigma)} \longrightarrow \cO(\sigma)$, that can be understood by $t~\longmapsto~t \cdot z_{\sigma},$ and we can devise it to define the \emph{distinguished compact torus} of $\cO(\sigma),$ given by
$$S(\sigma):= S_{N(\sigma)}\cdot z_{\sigma}\, .$$
\vskip 2mm
Now, for any positive integer $m$, we consider the multiplication map
$$
{\phi}_{m}: N \longrightarrow N, \quad v\longmapsto m v,
$$
which induces the \emph{toric morphism} (see \cite[Definition 1.3.13]{Cox-Little-Schenck}),
$$
\phi_m \otimes_{\Z} 1: T_{N} \longrightarrow T_{N}, \quad t \longmapsto t^m.
$$
To justify the latter, note that in terms of semigroup homomorphisms the induced map $\phi_m \otimes_{\Z} 1,$ is indeed
$$
\gamma \longmapsto \gamma \circ \phi_m = \gamma^m,
$$
for any semigroup homomorphism $\gamma: M \longrightarrow \C.$ As a result, when $T_N= (\C^*)^n$, we obtain our familiar group endomorphism
$$
\Phi_{m}= \phi_m \otimes_{\Z} 1: (\C^*)^n \longrightarrow (\C^*)^n, \quad (z_1, \dots, z_n) \longmapsto (z_1^{m}, \dots, z_n^{m}).
$$
\vskip 2mm 
We can also extend our multiplication map to $(\phi_m)_{\R}:N_{\R}\longrightarrow N_{\R},$ and note that it maps any cone $\sigma\in \Sigma$ to itself, and as a result it induces a {toric endomorphism}, which we also denote by $\Phi_{m}: X_{\Sigma}\longrightarrow X_{\Sigma};$ see \cite[Theorem 3.3.4]{Cox-Little-Schenck}. Since $\sigma^{\circ}$, the relative interior of $\sigma$, is invariant under $(\phi_m)_{\R},$ every toric orbit $\cO(\sigma)$ also remains invariant under $\Phi_m.$ In addition, $(\phi_{m})_{\R}^{-1}(\sigma^{\circ}) = \sigma^{\circ}$ implies that $\Phi_m^{-1}(\cO_{\sigma}) = \cO_{\sigma}.$ 
\vskip 2mm
Let us understand $\Phi_m{\upharpoonright}_{\cO(\sigma)}$ explicitly  by observing that
\begin{align*}
\Phi_m {\upharpoonright}_{\cO(\sigma)}:\cO(\sigma) &\longrightarrow \cO(\sigma), \\
t \cdot z_{\sigma} &\longmapsto t^m \cdot z_{\sigma}, \quad t\in T_{N(\sigma)}.
\end{align*}
Note that $T_N \simeq \Hom_{\Z}(M, \C^*),$ and $\Phi_m$ is the continuous extension of $t\in T_N \longmapsto t^m \in T_N,$ to $X_{\Sigma}\longrightarrow X_{\Sigma}.$ Moreover, the isomorphism $\cO(\sigma)\simeq \Hom_{\Z}(\sigma^{\perp} \cap M, \C^*),$ implies that $\cO(\sigma)$ as a subset of $U_{\sigma} = \Spec~\C[\sigma^{\vee}\cap M],$ is given by the set of semigroup homomorphisms satisfying
\begin{align*}
\gamma: \sigma^{\vee}\cap M &\longrightarrow \C, \\
u &\longmapsto
\begin{cases}
\gamma(u)\in \C^* & \text{if $u \in \sigma^{\perp}$}, \\
0 & \text{if $u \notin \sigma^{\perp}$}.
\end{cases}
\end{align*}
As a result, the map $\Phi_m: T_N \longrightarrow T_N,$ $\gamma \longmapsto \gamma^m,$ extends continuously on $\cO(\sigma)~\longrightarrow~\cO(\sigma),$ by $\gamma \longmapsto \gamma^m.$ Finally, we may write $\gamma = t\cdot z_{\sigma},$ for some $t\in T_{N(\sigma)},$ to obtain
$$\gamma = t\cdot z_{\sigma} \longmapsto \gamma^m = (t\cdot z_{\sigma})^m = t^m \cdot z_{\sigma}.$$

Now we have gathered enough tools to show that we have a separate {equidistribution theorem} within each toric orbit:

\begin{proposition}\label{prop:equi_orbit}
Let $\Phi_{m}:X_{\Sigma} \longrightarrow X_{\Sigma}$ be the toric endomorphism induced by the multiplication map. For any $z \in \cO(\sigma),$ we have the weak convergence
$$
m^{\dim(\sigma)-n} \Phi_{m}^{*}(\delta_z) \longrightarrow \mu(S(\sigma)),\quad \text{as $m\to \infty,$}
$$
where $\mu(S(\sigma))$ is the normalised Haar measure on the distinguished compact torus $S(\sigma)= S_{N(\sigma)}\cdot z_{\sigma} \subseteq \cO(\sigma).$
\end{proposition}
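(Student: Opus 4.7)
The plan is to reduce the claim to the concrete equidistribution of $m$-th roots of unity in the maximal compact torus of the relevant orbit. Since the discussion preceding the proposition establishes $\Phi_m^{-1}(\cO(\sigma)) = \cO(\sigma)$, every preimage of $z \in \cO(\sigma)$ under $\Phi_m$ lies in $\cO(\sigma)$. Under the Lie group identification $\cO(\sigma) \simeq T_{N(\sigma)}$ given by $t \cdot z_\sigma \leftrightarrow t$, the restriction $\Phi_m{\upharpoonright}_{\cO(\sigma)}$ becomes the $m$-th power map on $T_{N(\sigma)}$, and the distinguished compact torus $S(\sigma)$ corresponds to the maximal compact subgroup $S_{N(\sigma)}$. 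Writing $k := n - \dim \sigma$, so that $T_{N(\sigma)} \simeq (\C^*)^k$ and $S_{N(\sigma)} \simeq (S^1)^k$, I reduce the proposition to showing that for any $z \in (\C^*)^k$,
\[
\frac{1}{m^k}\sum_{w^m = z} \delta_w \;\longrightarrow\; \mu((S^1)^k) \quad \text{weakly as $m \to \infty$.}
\]

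For this reduced claim, I fix an $m$-th root $z^{1/m} \in (\C^*)^k$ and observe that the preimage decomposes as $z^{1/m} \cdot G_m^k$, where $G_m \subseteq S^1$ denotes the group of $m$-th roots of unity. The measure therefore factors as
\[
\frac{1}{m^k} \sum_{w^m = z} \delta_w \;=\; \bigl(T_{z^{1/m}}\bigr)_{*} \Bigl( \frac{1}{m} \sum_{\zeta \in G_m} \delta_\zeta \Bigr)^{\otimes k},
\]
where $T_w$ denotes translation by $w$ on the group $(\C^*)^k$. The classical one-factor equidistribution $m^{-1}\sum_{\zeta \in G_m} \delta_\zeta \to \mu(S^1)$ follows from the orthogonality of characters $m^{-1}\sum_{\zeta \in G_m}\zeta^\ell = \mathbbm{1}_{\{m \mid \ell\}}$ together with Stone-Weierstrass, and tensoring yields weak convergence of the $k$-fold product to $\mu((S^1)^k)$.

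Finally, to absorb the translation by $z^{1/m}$, I note that choosing principal $m$-th roots gives $z^{1/m}\to (1,\dots,1)$ as $m \to \infty$, and all preimages lie in a fixed compact neighborhood of $(S^1)^k$ in $(\C^*)^k$ once $m$ is large. Hence for any $\varphi\in\cD^{0,0}((\C^*)^k)$, uniform continuity of $\varphi$ on this neighborhood allows one to replace $\varphi(z^{1/m}\cdot w)$ by $\varphi(w)$ with vanishing error, so the translated sequence shares its weak limit with the untranslated one. Transporting this conclusion through the isomorphism $\cO(\sigma)\simeq T_{N(\sigma)}$ yields the stated convergence to $\mu(S(\sigma))$. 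The only mildly delicate point is this last step: one must combine the weak limit of the roots-of-unity measures with the uniform continuity of compactly supported test functions to absorb the shift by $z^{1/m}$. All remaining ingredients are immediate from the cone-orbit correspondence and the explicit form of $\Phi_m{\upharpoonright}_{\cO(\sigma)}$ as the $m$-th power map.
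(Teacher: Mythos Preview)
Your proof is correct and follows essentially the same route as the paper: reduce via the Lie group isomorphism $\cO(\sigma)\simeq T_{N(\sigma)}\simeq(\C^*)^q$ (with $q=n-\dim\sigma$) to the equidistribution of $m$-th preimages under the coordinate-wise power map. The only difference is in the final step: the paper dispatches the convergence $m^{-q}\sum_{w^m=z}\varphi(w)\to\int_{(S^1)^q}\varphi\,d\mu$ in one line by calling it a Riemann sum, whereas you unpack it more carefully via the factorisation $z^{1/m}\cdot G_m^k$, character orthogonality plus Stone--Weierstrass on each $S^1$-factor, and a uniform-continuity argument to absorb the translation by $z^{1/m}\to 1$. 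Your version is more explicit but not genuinely different in strategy.
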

\begin{proof}
We have observed that
\begin{align*}
\Phi_m {\upharpoonright}_{\cO(\sigma)}:\cO(\sigma) &\longrightarrow \cO(\sigma), \\
t \cdot z_{\sigma} &\longmapsto t^m \cdot z_{\sigma}, \quad t\in T_{N(\sigma)}.
\end{align*}
Let
\begin{align*}
  \xi: T_{N} &\longrightarrow \cO(\sigma),\quad t \longmapsto t\cdot z_{\sigma}, \\
  \varphi_m: T_{N(\sigma)} &\longrightarrow T_{N(\sigma)},\quad t\longmapsto t^m.
\end{align*}
The fact that the pullbacks $\xi^*$ and $(\xi^{-1})^*$ are continuous with respect to the weak topology of currents, see Section \ref{sec:prem_currents}, implies that to prove the proposition it suffices to observe the weak convergence for the conjugate map instead:
$$
m^{\dim(\sigma)-n}\varphi_m^* (\delta_{z'}) \longrightarrow \mu(S_{N(\sigma)}),
$$
for any $z' \in T_{N(\sigma)},$ with $z' = \xi^{-1}(z).$ Let $q= \dim(\cO(\sigma))= n-\dim(\sigma).$ Note that the isomorphism $N(\sigma) \simeq \Z^q,$ induces an isomorphisms of Lie groups $S_{N(\sigma)}\simeq (S^1)^q$, and a biholomorphism of complex manifolds
$$
N(\sigma)\otimes_{\Z} \C^* \xrightarrow[~]{~~\thicksim~~} \Z^{q}\otimes_{\Z} \C^* = (\C^*)^q.
$$
Consequently, by weak continuity of pullback morphisms, to prove the asserted convergence, it suffices to show the analogous statement on $(\C^*)^q.$ On $(\C^*)^q$, however, the induced map, which we also denote by $\Phi_m,$ is given by
$$
(\C^*)^q \longrightarrow (\C^*)^q, \quad (z_1, \dots, z_q) \longmapsto (z_1^{m}, \dots, z_q^{m}).
$$
In this case, for any compactly supported smooth function $f:(\C^*)^n \longrightarrow \C$, we have
$$
m^{-q}\langle \Phi_{m}^*(z), f \rangle \longrightarrow \int_{(S^1)^q} f d\mu,
$$
as the left-hand side is simply tending to the Riemann sum for the integral on the right-hand side since $\mu$ is the Haar measure on $(S^1)^q$.

\end{proof}

Let us end this subsection with the following useful lemma, which can be also proved directly for any toric variety without the smoothness assumption.

\begin{lemma}\label{lem:compactify-commute}
Let $Z\subseteq (\CC^*)^n$ be a subvariety, and $\bar{Z}$ be the closure of $Z$ in a smooth toric variety $X_{\Sigma}.$ Then
$$
\Phi_m^{-1}(\bar{Z}) = \overline{{\Phi^{-1}_{m}}(Z)}\, .
$$
\end{lemma}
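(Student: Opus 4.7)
The plan is to establish the equality by proving the two inclusions separately. The inclusion $\overline{\Phi_m^{-1}(Z)} \subseteq \Phi_m^{-1}(\overline{Z})$ is immediate: since $\Phi_m: X_\Sigma \to X_\Sigma$ is continuous, $\Phi_m^{-1}(\overline{Z})$ is closed in $X_\Sigma$, and it clearly contains $\Phi_m^{-1}(Z)$, hence also contains its closure. All the work is in the reverse inclusion.

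For the reverse inclusion, set $Y := \overline{\Phi_m^{-1}(Z)}$ (closure in $X_\Sigma$). The key idea is to push $Y$ forward along $\Phi_m$ and exploit that $\Phi_m$ is a finite toric morphism. By \cite[Theorem 3.3.4]{Cox-Little-Schenck} and the preceding discussion, $\Phi_m$ is a finite surjective morphism of toric varieties, hence proper and in particular a closed map of topological spaces. Consequently $\Phi_m(Y)$ is closed in $X_\Sigma$. Since $\Phi_m|_{(\C^*)^n}$ is surjective we have $\Phi_m(\Phi_m^{-1}(Z)) = Z$, so $Z \subseteq \Phi_m(Y)$, and by closedness $\overline{Z} \subseteq \Phi_m(Y)$. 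Applying $\Phi_m^{-1}$ yields
\[
\Phi_m^{-1}(\overline{Z}) \subseteq \Phi_m^{-1}(\Phi_m(Y)).
\]
It therefore suffices to show $\Phi_m^{-1}(\Phi_m(Y)) = Y$.

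The point here is that the fibers of $\Phi_m$ are orbits of the finite subgroup $\mu_m^n \subset (\C^*)^n = T_N$, where $\mu_m$ denotes the $m$-th roots of unity. On the open torus this is obvious: $x^m = x'^m$ forces $x'/x \in \mu_m^n$. On a boundary orbit $\cO(\sigma) \simeq \Hom_{\Z}(\sigma^{\perp} \cap M, \C^*)$, the map $\Phi_m$ is $\gamma \longmapsto \gamma^m$, and two such homomorphisms have the same $m$-th power iff their ratio lies in $\Hom_{\Z}(\sigma^\perp \cap M, \mu_m)$; this group is precisely the image of the restriction $\mu_m^n = \Hom_{\Z}(M,\mu_m) \twoheadrightarrow \Hom_{\Z}(\sigma^\perp \cap M, \mu_m)$, which is surjective since $\sigma^\perp \cap M$ is a direct summand of $M$. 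Hence the fibers of $\Phi_m$ on every orbit are $\mu_m^n$-orbits, which means $\Phi_m^{-1}(\Phi_m(A)) = \mu_m^n \cdot A$ for any subset $A \subseteq X_\Sigma$.

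Finally, $Y$ is $\mu_m^n$-invariant: if $x \in \Phi_m^{-1}(Z)$ and $\xi \in \mu_m^n$ then $(\xi x)^m = x^m \in Z$, so $\Phi_m^{-1}(Z)$ is $\mu_m^n$-invariant, and since the $\mu_m^n$-action on $X_\Sigma$ is by homeomorphisms the closure $Y$ inherits this invariance. Combining,
\[
\Phi_m^{-1}(\overline{Z}) \subseteq \Phi_m^{-1}(\Phi_m(Y)) = \mu_m^n \cdot Y = Y = \overline{\Phi_m^{-1}(Z)},
\]
which closes the argument. The main obstacle, and the only substantive piece, is the verification in the third paragraph that on each boundary orbit the fibers of $\Phi_m$ coincide with $\mu_m^n$-orbits; this is where the toric structure is used essentially and where one must be careful that the ambient finite group $\mu_m^n \subset T_N$ still acts transitively on fibers after passing to the quotient torus $T_{N(\sigma)}$.
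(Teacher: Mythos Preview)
Your proof is correct. Both arguments hinge on the same two ingredients---the properness (closedness) of $\Phi_m$ and the fact that every fiber of $\Phi_m$ on $X_\Sigma$ is a single $\mu_m^n$-orbit---but you package them differently from the paper. The paper argues point-by-point: given $y\in\Phi_m^{-1}(\overline Z)$, it picks a sequence $x_k\in Z$ converging to $\Phi_m(y)$, uses properness to extract a convergent lift $y'_k\to y'\in\overline{\Phi_m^{-1}(Z)}$, and then invokes the ``symmetry of $m$-th roots'' (i.e.\ the $\mu_m^n$-transitivity on fibers, including on boundary orbits) to translate $y'$ to $y$. You instead work globally: closedness of $\Phi_m$ gives $\overline Z\subseteq\Phi_m(Y)$ for $Y=\overline{\Phi_m^{-1}(Z)}$, and then $\mu_m^n$-invariance of $Y$ together with the fiber description yields $\Phi_m^{-1}(\Phi_m(Y))=Y$ in one stroke. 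Your route is cleaner and makes the group-quotient structure of $\Phi_m$ explicit; the paper's sequential argument is more hands-on but is doing the same thing underneath. One minor remark: the citation to \cite[Theorem 3.3.4]{Cox-Little-Schenck} gives the existence of the toric morphism, not its finiteness; the finiteness is immediate from the affine-local description $\chi^u\mapsto\chi^{mu}$ on $\C[\sigma^\vee\cap M]$, which you could state in one line.
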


\begin{proof}
When $X_{\Sigma}$ is smooth, according to \cite[Exercise 18.17]{Eisenbud-commutative-book}, the finiteness property of $\Phi_m$ implies that $\Phi_m$ is flat. The assertion then follows from \cite[Th\'eor\`eme 2.3.10]{GrothendieckIV}.


\end{proof}

\subsection{Tropical Compactifications}
Let $X$ be a smooth, projective toric variety and fix a torus equivariant projective embedding 
$$
\phi: X \longrightarrow \P^N. 
$$
Assume that $\omega_0$ is the smooth positive $(1,1)$-form on $X,$ corresponding to $\phi.$ We have that by Wirtinger's theorem {\cite[Page 31]{Griffiths-Harris}}, the closure of any closed algebraic variety $Z\subset (\C^*)^n$ in $X$ has a finite mass with respect to $\omega_0$. 
We can therefore employ the El~Mir--Skoda theorem, \cite[Section III.2.A]{DemaillyBook1}, and extend $[Z]$ by zero to the closed positive current $\overline{[Z]}$ on $X$. Similarly, any tropical current $\sT_{\cC}$ can be extended by zero to any smooth projective toric variety, since any fiber $\pi_{\textrm{aff}(\sigma)}^{-1}(x),$ for $\sigma \in \cC,$ has a bounded normalised mass; see \cite[Proposition 4.4]{BH} for more details where we also treat the tropical currents with non-positive weights. If, moreover, we demand that the support of  $\overline{[Z]}$ or the fibers of $\overline{\sT}_{\cC}$ intersect the torus-invariant divisors of $X_{\Sigma}$ properly we need certain compatibility with $\Sigma$, which we present in Theorems~\ref{thm:tevelev} and \ref{thm:weight_intersect} below, after recalling some important theorems from toric geometry.
\vskip 2mm
 Recall that a $p$-dimensional cone $\sigma = \text{cone}(\rho_1, \dots , \rho_p) \subseteq \R^n \simeq N_{\R}$ is called \emph{unimodular}, if $\{\rho_1, \dots, \rho_p \}$ can be completed to a $\Z$-basis of $\Z^n.$  
\begin{theorem}\label{thm:toric_basic}
Let $\Sigma\subseteq \R^n,$ be a fan. Then,
\begin{enumerate}
  \item [(a)] $X_{\Sigma}$ is smooth, if and only if, $\Sigma$ is \emph{unimodular}, \textit{i.e.}, all cones in $\Sigma$ are unimodular.
  \item [(b)] $X_{\Sigma}$ is \emph{complete}, if and only if, $\Sigma$ is \emph{complete}, \textit{i.e.}, $|\Sigma| = \R^n.$
  \item [(c)] $X_{\Sigma}$ is projective, if and only if, $\Sigma$ is \emph{projective}, \textit{i.e.}, $\Sigma$ is \emph{dual} to a polytope.
\end{enumerate}
\end{theorem}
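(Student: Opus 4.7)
The plan is to treat the three parts separately, using the affine cover $X_\Sigma = \bigcup_{\sigma \in \Sigma} U_\sigma$ with $U_\sigma = \Spec~\C[\sigma^\vee \cap M]$ and the cone-orbit correspondence recalled in the previous subsection. Throughout I would exploit $T_N$-equivariance to reduce global questions to the distinguished points $z_\sigma$.

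For part (a), smoothness is local, so $X_\Sigma$ is smooth iff every $U_\sigma$ is. The key algebraic fact is that $\C[\sigma^\vee \cap M]$ is a polynomial ring precisely when the semigroup $\sigma^\vee \cap M$ is freely generated, which by duality between $\sigma$ and $\sigma^\vee$ is equivalent to $\sigma$ being generated by part of a $\Z$-basis of $N$. Smoothness of $U_\sigma$ at points of lower-dimensional orbits $\cO(\tau)$ with $\tau$ a face of $\sigma$ follows from smoothness at $z_\sigma$ together with the unimodularity hypothesis inherited by $\tau$, so the single unimodularity condition on all cones suffices.

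For part (b), I would apply the valuative criterion of properness against $\C[[t]]$, reducing completeness to the following statement on one-parameter subgroups $\lambda_v:\C^* \to T_N$, $v\in N$: the limit $\lim_{t\to 0}\lambda_v(t)$ exists in $X_\Sigma$ iff $v\in|\Sigma|$. A direct computation on each affine chart $U_\sigma$ in the coordinates $\chi^u$ for $u\in\sigma^\vee\cap M$ shows this limit exists and equals $z_\sigma$ exactly when $v\in\sigma^\circ$, delivering both implications once one approximates real $v$ by rational ones. Conversely, if $|\Sigma|\neq N_\R$, one can produce a $v$ whose trajectory leaves every compact set of $X_\Sigma$, obstructing completeness.

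For part (c), one direction is immediate: $X_\Sigma$ projective implies complete, and normal fans of polytopes are complete, so this reduces to the classical correspondence between $T$-invariant ample Cartier divisors on a complete $X_\Sigma$, strictly convex piecewise linear support functions on $\Sigma$, and lattice polytopes with $\Sigma$ as normal fan. Given a polytope $P$ with $\Sigma$ as its normal fan, the divisor $D_P = \sum_\rho -\psi_P(u_\rho) D_\rho$ becomes very ample after rescaling and realises the projective embedding $X_\Sigma\hookrightarrow\P^{|P\cap M|-1}$ through monomials indexed by $P\cap M$. The main obstacle is the very-ampleness step, where one must separate points and tangent directions across every torus orbit simultaneously; the converse (extracting a polytope from an ample equivariant divisor via its support function) is the more formal side, and the parts (a) and (b) by contrast are clean local or valuative checks.
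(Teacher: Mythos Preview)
The paper does not prove this theorem at all: it is stated as standard background and the reader is referred to \cite{Cox-Little-Schenck} (see also the remark immediately following the statement, which points to \cite[Page 67]{Cox-Little-Schenck} for the definition of a dual polytope). There is therefore no ``paper's own proof'' to compare against; your sketch is essentially the textbook argument one finds in that reference.

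Your outline is sound, with one small imprecision in part (a): the claim that $\C[\sigma^\vee \cap M]$ is a polynomial ring exactly when $\sigma$ is unimodular is only literally true when $\sigma$ is full-dimensional. For a unimodular cone $\sigma$ of dimension $k<n$, the semigroup $\sigma^\vee \cap M$ has units coming from $\sigma^\perp \cap M$, and $U_\sigma \simeq \C^k \times (\C^*)^{n-k}$ rather than affine space. The correct formulation is that $U_\sigma$ is smooth iff $\sigma^\vee \cap M$ is generated by a $\Z$-basis of $M$ (some generators invertible, some not), which is the dual statement to unimodularity of $\sigma$. This does not affect the validity of your argument, only its phrasing.
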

Recall that, since our base field is the set of complex numbers,  a variety is \emph{complete}, if and only if, it is compact in the classical topology. Moreover, we do not require our varieties to be projective in this article, and have added Theorems \ref{thm:toric_basic}.c and \ref{thm:toric_adv}.c for completeness; see \cite[Page 67]{Cox-Little-Schenck} for the definition of a dual polytope.  Further,
\begin{theorem}\label{thm:toric_adv}
\begin{enumerate}

  \item [(a)] The Toric Resolution of Singularities: any fan $\Sigma$ can be refined to obtain a unimodular fan;  \cite[Theorem 11.1.9]{Cox-Little-Schenck}.
  \item [(b)] For any fan $\Sigma$ there exists complete fan $\Sigma'$ containing $\Sigma$ as a subfan; \cite[Theorem III.2.8]{Ewald}.  
  \item [(c)] The Toric Chow Lemma: every complete fan has a refinement that is projective; \cite[Theorem 6.1.18]{Cox-Little-Schenck}.
\end{enumerate}
\end{theorem}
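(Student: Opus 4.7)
The plan is to address the three items of Theorem~\ref{thm:toric_adv} separately; all are classical combinatorial results in toric geometry whose proofs rest on stellar subdivisions and support functions.

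For (a), I would proceed in two stages. First, if a cone $\sigma \in \Sigma$ is not simplicial, I pick a primitive lattice vector $v$ in its relative interior and perform the stellar subdivision through $\R_{\ge 0} v$, replacing $\sigma$ by the cones joining $v$ to its proper faces; by always selecting $v$ inside a \emph{minimal} non-simplicial cone, compatibility across neighbouring cones is automatic, and iterating yields a simplicial refinement. Second, for a simplicial cone $\sigma = \mathrm{cone}(\rho_1, \dots, \rho_k)$ I use its \emph{multiplicity} $\mathrm{mult}(\sigma) := [N_\sigma : \Z\rho_1 + \dots + \Z\rho_k]$, noting that $\sigma$ is unimodular if and only if $\mathrm{mult}(\sigma) = 1$. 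When $\mathrm{mult}(\sigma) > 1$ the half-open fundamental parallelepiped of $\{\rho_i\}$ contains a nonzero lattice vector $v$, and the stellar subdivision through $\R_{\ge 0} v$ produces only cones of strictly smaller multiplicity. A descent on the maximum multiplicity terminates in a unimodular refinement. The main obstacle is bookkeeping to preserve fan-compatibility during these iterations, handled by always working at the level of a minimal offending cone.

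For (b), the plan is to iteratively attach new cones to $\Sigma$ along its boundary, reaching into the complement $N_\R \setminus |\Sigma|$, until the support is the whole space. Concretely, one selects primitive rays pointing into the complement and joins them with compatible boundary faces of $\Sigma$, checking at every step that newly formed cones meet existing ones only along common faces. The main obstacle is precisely this combinatorial check, which Ewald's argument resolves by a careful inductive choice of added rays guided by an auxiliary simplex, so that the fan axioms survive each attachment and the process terminates after finitely many steps.

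For (c), my strategy is to produce a strictly convex rational piecewise linear function on a refinement of the given complete fan $\Sigma$. I would fix any projective complete fan $\Sigma_0$ with strictly convex support function $\psi_0$, let $\Sigma'$ be the common refinement of $\Sigma$ and $\Sigma_0$, and observe that $\psi_0$ remains piecewise linear on $\Sigma'$ and is strictly convex across every wall inherited from $\Sigma_0$. Across the remaining walls of $\Sigma'$ (those arising only from $\Sigma$) I would add a small rational piecewise linear perturbation strictly convex at each such wall; since the sum of strictly convex piecewise linear functions is strictly convex, the resulting support function witnesses the projectivity of $X_{\Sigma'}$ by the standard toric criterion. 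The delicate step is choosing the perturbation with rational coefficients while keeping it small enough not to destroy the strict convexity already obtained from $\psi_0$; this is standard but is the point where one must be genuinely careful.
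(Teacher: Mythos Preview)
The paper does not prove Theorem~\ref{thm:toric_adv}; it records the three statements with citations to \cite{Cox-Little-Schenck} and \cite{Ewald}, so there is no proof in the paper to compare against. Your sketches for (a) and (b) are the standard arguments carried out in those references: stellar subdivision with descent on the multiplicity for the unimodular refinement, and Ewald's inductive cone-attachment for the completion.

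Your plan for (c), however, has a gap at the perturbation step. After forming the common refinement $\Sigma'$ of $\Sigma$ and a projective $\Sigma_0$, the support function $\psi_0$ is indeed only weakly convex across any wall of $\Sigma'$ lying in the interior of a maximal cone of $\Sigma_0$. You then posit a ``small rational piecewise linear perturbation strictly convex at each such wall,'' but you do not say where it comes from, and this is precisely the content of the theorem: producing a PL function on a given complete fan with prescribed strict convexity along a given set of walls is not automatic. You locate the delicate point in keeping the perturbation small and rational, but the real obstacle is its \emph{existence}; without that, the argument does not close.

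The standard proof in \cite{Cox-Little-Schenck} is both simpler and avoids this issue. List the hyperplanes $H_1,\dots,H_s \subseteq N_{\R}$ spanned by the codimension-one cones of $\Sigma$, and let $\Sigma''$ be the fan of this central hyperplane arrangement. Every maximal cone of $\Sigma$ is already an intersection of half-spaces bounded by the $H_i$, so $\Sigma''$ refines $\Sigma$. Choosing nonzero $\ell_i\in M$ with $H_i=\ker\ell_i$, the function $\psi=\sum_i |\langle \ell_i,\cdot\rangle|$ is piecewise linear on $\Sigma''$ and strictly convex across every wall, since each wall lies in some $H_i$ where $|\langle \ell_i,\cdot\rangle|$ has a genuine kink. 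Equivalently, $\Sigma''$ is the normal fan of the zonotope $\sum_i[-\ell_i,\ell_i]$, hence projective. No auxiliary projective fan and no perturbation are needed.
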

Since our main objects in this article are differential forms and currents, we always need our ambient toric varieties to be smooth or equivalently their fan to be unimodular. When $X_{\Sigma}$ is smooth, $X_{\Sigma} \setminus (\C^*)^n$ is a simple normal crossing divisor, and the orbit closures $D_{\sigma}:= \overline{\cO(\sigma)},$ for $\sigma \in \Sigma,$ are intersections of its irreducible components.
\vskip 2mm
As we mentioned in the introduction that $\trop(Z)$ as a set is given by the Hausdorff limit
$$\trop(Z):= \lim_{t\to \infty}\frac{1}{\log|t|}\Log(Z).$$
Further, we know from the Structure Theorem in tropical geometry, \cite[Theorem 3.3.5]{Maclagan-Sturmfels}, that $\trop(Z)$ can be regarded as the support of a rational polyhedral complex. In the following paragraphs, we recall how to endow a tropical structure on the set $\trop(Z),$ using the following theorem due to Tevelev and Sturmfels. Tevelev's theorem below provides a condition for the closure of $Z$ to be complete in a given toric variety, even when the toric variety is not complete. We also note that one significance of \cite{Tevelev} is to consider the case where the compactification is \emph{flat}, see \cite[Section 6.4]{Maclagan-Sturmfels} for a definition, which we do not require in this article.  
\begin{theorem}\label{thm:tevelev}
Let $Z \subseteq (\C^*)^n$ be an irreducible algebraic subvariety of dimension $p$, and $\Sigma\subseteq N_{\R}$, a unimodular (rational) fan.
\begin{itemize}
  \item [(a)]  The closure $\bar{Z}$ of $Z$ in $X_{\Sigma}$ is complete, if and only if, $\trop(Z) \subseteq |\Sigma|;$ see \cite{Tevelev}. 
  \item [(b)] We have $|\Sigma|= \trop(Z)$, if and only if, for every $\sigma \in \Sigma$ the intersection $ \cO_{\sigma}\cap \bar{Z}$ is non-empty and of pure dimension $p- \dim(\sigma);$ see \cite{Sturmfels-Tevelev}.
\end{itemize}
\end{theorem}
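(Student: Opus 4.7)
\emph{Plan.} I would deduce both parts from the valuative criterion of properness combined with the fundamental theorem of tropical geometry. With the trivial valuation on $\C$, the fundamental theorem identifies $\trop(Z)$ with the set of points $-v(z) \in N_\R$ arising from pairs $(K,v,z)$, where $(K,v)$ is a rank-one valued extension of $\C$ and $z \in Z(K) \subseteq (K^*)^n$. The key dictionary is that a DVR $(R,v) \supseteq \C$ with fraction field $K$ together with a morphism $\Spec K \to Z \subseteq (\C^*)^n$ produces a cocharacter $w := -v \circ (z_1,\ldots,z_n) \in N_\R$, and completeness of $\overline{Z}$ is precisely the condition that every such $R$-point extends uniquely to $X_\Sigma$ and lands in $\overline{Z}$.

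\emph{Proof of (a).} For the direction $\trop(Z) \subseteq |\Sigma| \Rightarrow \overline{Z}$ complete, I apply the valuative criterion: given $(R,v,K)$ as above and $\Spec K \to Z$, the associated cocharacter $w$ lies in $\trop(Z) \subseteq |\Sigma|$, hence in some $\sigma \in \Sigma$. The affine toric chart $U_\sigma = \Spec \C[\sigma^\vee \cap M]$ admits a canonical extension $\Spec R \to U_\sigma$ from the standard theory of toric varieties (characters in $\sigma^\vee$ have non-negative valuation on the $R$-point), and its image lies in $\overline{Z}$ since $\overline{Z}$ is closed in $X_\Sigma$. Conversely, if there existed $w \in \trop(Z) \setminus |\Sigma|$, the fundamental theorem would supply an $R$-point of $Z$ whose closed fibre has no center in $X_\Sigma$, contradicting completeness of $\overline{Z}$.

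\emph{Proof of (b).} One direction follows readily using (a): proper non-empty intersection of $\overline{Z}$ with every orbit forces $\overline{Z}$ to be compact, so $\trop(Z) \subseteq |\Sigma|$ by (a); and non-emptiness of $\overline{Z} \cap \cO(\sigma)$ yields, via the fundamental theorem applied inside $\cO(\sigma) \simeq T_{N(\sigma)}$, a point of $\sigma^\circ$ in $\trop(Z)$, so $|\Sigma| \subseteq \trop(Z)$ as well. Conversely, assume $|\Sigma| = \trop(Z)$. Non-emptiness of $\overline{Z}\cap\cO(\sigma)$ is obtained by choosing $w \in \sigma^\circ$ and extending the corresponding $R$-point as in (a); the closed fibre lands in $\cO(\sigma)$ precisely because $w \in \sigma^\circ$. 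The dimension claim then comes from the identification of $\trop(\overline{Z} \cap \cO(\sigma))$ with the star of $\sigma$ inside $\trop(Z)$, which is a polyhedral complex pure of dimension $p - \dim(\sigma)$ because $\trop(Z)$ itself is pure of dimension $p$.

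\emph{Main obstacle.} The delicate step is the equidimensionality in (b). Non-emptiness and the lower bound $\dim(\overline{Z}\cap\cO(\sigma)) \geq p-\dim(\sigma)$ follow directly from the valuative/tropical dictionary above, but the matching upper bound requires a flatness-type input. The cleanest route I would pursue is to realise $\overline{Z} \cap \cO(\sigma)$ as an initial degeneration $\mathrm{in}_w(Z)$ for $w \in \sigma^\circ$, viewed inside $T_{N(\sigma)}$ via the residue map $R \to R/\mathfrak{m}$, and then invoke the Bieri--Groves dimension theorem to conclude that this degeneration is pure of the expected dimension $p-\dim(\sigma)$. This would also be the place where smoothness (unimodularity) of $\Sigma$ enters, ensuring that each affine chart $U_\sigma$ splits as $\C^{\dim\sigma} \times T_{N(\sigma)}$ so that the degeneration picture is literal rather than only scheme-theoretic.
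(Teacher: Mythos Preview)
The paper does not supply a proof of this theorem; it is quoted from the literature, with part~(a) attributed to \cite{Tevelev} and part~(b) to \cite{Sturmfels-Tevelev}. Your sketch via the valuative criterion of properness together with the fundamental theorem of tropical geometry is the standard route and is essentially how the cited references argue, so there is nothing to compare on the paper's side.

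That said, one step in your argument for~(b) does not go through as written. In the direction ``proper non-empty intersections $\Rightarrow |\Sigma|=\trop(Z)$'' you claim that proper non-empty intersection of $\overline{Z}$ with every orbit forces $\overline{Z}$ to be compact, and then invoke~(a) to get $\trop(Z)\subseteq|\Sigma|$. This implication is false without an additional hypothesis: take $\Sigma=\{0\}$, so $X_\Sigma=(\C^*)^n$; the unique orbit meets $Z$ in dimension $p-0=p$, yet $\overline{Z}=Z$ is not compact and $\trop(Z)\neq\{0\}$. The statement in \cite{Sturmfels-Tevelev} (compare Proposition~\ref{prop:sturm-tev} later in the paper, which assumes $\Sigma$ complete) carries extra hypotheses that rule out such examples; the paper's phrasing of~(b) is somewhat informal on this point, and your proof inherits that imprecision. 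Your forward direction of~(b) and all of~(a) are fine, and you correctly isolate the equidimensionality of $\overline{Z}\cap\cO(\sigma)$ as the substantive content, handled via initial degenerations and Bieri--Groves.
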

For a subvariety $Z\subseteq (\C^*)^n$, we can always find a unimodular fan $\Sigma$ such that $\trop(Z)=|\Sigma|,$ this is a consequence of the toric resolution of singularities, Thereom~\ref{thm:toric_adv}.a. Suppose now that $\Sigma'$ is a fan that contains $\Sigma$ as a subfan. Then, the closure of $Z$ in $X_{\Sigma}= \bigcup_{\sigma \in \Sigma} \cO(\sigma)
$ can be identified with the closure of $Z$ in $X_{\Sigma'} = \bigcup_{\sigma \in \Sigma'} \cO(\sigma).$ Applying the preceding theorem we obtain that for a $p$-dimensional cone $\sigma\in \Sigma',$ the intersection $\cO(\sigma) \cap \bar{Z}$ is non-empty, if and only if, $\sigma \in \Sigma,$ and in this case, the intersection is of dimension $p- \dim(\sigma);$ see also \cite[Section 6.4]{Maclagan-Sturmfels} and \cite{Gubler-Guide}. Therefore, thanks to \cite[Corollary 4.10]{DemaillyBook1}, for any positive integer $k\leq p,$ if $\tau= \text{cone}(\rho_1, \dots , \rho_k)\in \Sigma,$ where $\rho_i \in \Sigma$ are rays, the wedge product
$$[D_{\tau}]\wedge [\bar{Z}] = [D_{\rho_1}]\wedge \dots \wedge [D_{\rho_k}] \wedge [\bar{Z}]$$
is admissible, and yields a $(p-k, p-k)$-closed positive current which can be considered in both $X_{\Sigma}$ and $X_{\Sigma'}$. When $\dim(\sigma) = p,$ the intersection is $0$-dimensional, and we set
$$
w_{\sigma}:= \int_{X_{\Sigma}} [D_\sigma] \wedge [\bar{Z}].
$$

It is observed in tropical geometry that $\trop(Z)$ with the induced fan structure from $\Sigma,$ and the induced weights $w_{\sigma}$ is a balanced fan; see \cite[Theorem 6.7.7]{Maclagan-Sturmfels}. We also review this induced balancing condition as a consequence of our main convergence theorem by noting that the weak limit of a sequence of closed currents is indeed closed.
\begin{definition}\label{def:trop_comp}
Suppose $Z\subseteq (\C^*)^n$ is an irreducible subvariety, and $\Sigma$ is a fan with $|\Sigma| = \trop(Z).$ The closure $\bar{Z}$ of $Z$ in the toric variety $X_{\Sigma},$ or equivalently in any $X_{\Sigma'},$ such that $\Sigma'$ contains $\Sigma$ as a subfan, is called a \emph{tropical compactification} of $Z.$
\end{definition}

Let us also investigate the situation for the tropical currents. Recall from Theorem \ref{thm:closed-balanced} when a polyhedral complex $\cC$ satisfies the balancing condition, then $\sT_{\cC}$ is closed. 
\begin{theorem}\label{thm:weight_intersect}
Let $\cC, \Sigma \subseteq \R^n$ be two fans, and assume that $\cC$ is a $p$-dimensional tropical variety. 
\begin{itemize}
 \item [(a)]  Let $\tau\in \cC,$ $\sigma \in \Sigma,$ and $\overline{\pi^{-1}_{{\text{aff}(\tau)}}(x)}$ be the closure in $U_{\sigma}.$ Then, the intersection
 $$
 D_{\sigma} \cap \overline{\pi^{-1}_{{\text{aff}(\tau)}}(x)}
 $$
 is non-empty, if and only if, $\tau$ contains $\sigma$ as a face and in this case the intersection is transverse;  {\cite[Lemma 4.10.1]{BH}}.
 \item [(b)] If $\sigma \in \cC \cap \Sigma$ is cone of dimension $p$ with weight $w_{\sigma}$ in $\cC$, then
 $$
 w_{\sigma} = \int_{X_{\Sigma}} [D_{\sigma}] \wedge \overline{\sT}_{\cC}.
 $$
where $\overline{\sT}_{\cC}$ is the extension by zero of $\sT_{\cC}$ in $X_{\Sigma}$; \cite[Theorem 4.7]{BH}.
\end{itemize}
\end{theorem}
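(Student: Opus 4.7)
For part (a), I would first note that the fiber $\pi^{-1}_{\text{aff}(\tau)}(x)$ is a translate of the subtorus $T_{H_\tau \cap N} \subseteq T_N$, and invoke the standard orbit description of $X_\Sigma$: a $1$-parameter subgroup $\lambda_v \colon \C^* \to T_N$ has limit $\lim_{t\to 0}\lambda_v(t) \in \cO(\sigma')$ precisely when $v$ lies in $(\sigma')^\circ$. Feeding $1$-parameter subgroups contained in $T_{H_\tau \cap N}$ into this dictionary, the closure of the fiber in $X_\Sigma$ meets $\cO(\sigma)$ if and only if some $v \in \sigma^\circ$ lies in $H_\tau$; under the compatibility of $\cC$ and $\Sigma$ (as is needed for $\overline{\sT}_\cC$ on $X_\Sigma$ to be sensible), this is equivalent to $\sigma \subseteq \tau$. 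For transversality I would choose unimodular coordinates on $U_\sigma$ so that $\sigma = \text{cone}(e_1,\dots,e_k)$ and $D_\sigma = \{z_1 = \cdots = z_k = 0\}$. Since $\sigma \subseteq \tau$ forces $H_\sigma \subseteq H_\tau$, the subspace $H_\tau$ is spanned by $\{e_j : j \in I_\tau\}$ for some $I_\tau \supseteq \{1,\dots,k\}$, and the closure of the fiber becomes $\{z_j = c_j : j \notin I_\tau\}$; a direct tangent space inspection at any intersection point then shows the tangents of $D_\sigma$ and of the closed fiber are complementary.

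For part (b), the plan is to substitute
$$\overline{\sT}_\cC \;=\; \sum_{\tau \in \cC,\,\dim \tau = p} w_\tau\, \mathbbm{1}_{\overline{\Log^{-1}(\tau^\circ)}}\,\overline{\sT}_{\text{aff}(\tau)}$$
into the integral. By part (a), $D_\sigma$ intersects the support of the $\tau$-summand only when $\sigma \subseteq \tau$, and since both have dimension $p$ this forces $\tau = \sigma$. What remains is to prove
$$
\int_{X_\Sigma} [D_\sigma] \wedge \overline{\sT}_{\text{aff}(\sigma)} \;=\; 1,
$$
which I would handle via the integral representation $\sT_{\text{aff}(\sigma)} = \int [\pi^{-1}_{\text{aff}(\sigma)}(x)]\,d\mu(x)$ and Fubini, reducing to the geometric statement that a single closed fiber meets $D_\sigma$ transversely in exactly one point. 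In the unimodular coordinates above, the closed fiber reads $\{(z_1,\dots,z_p, x_{p+1},\dots,x_n) : z_j \in \C\}$ and meets $D_\sigma$ at the unique point $(0,\dots,0,x_{p+1},\dots,x_n)$ with multiplicity one; integrating the constant function $1$ against the unit-mass Haar measure closes the computation and yields $w_\sigma$ after multiplication by the weight.

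The hard part will be justifying that the wedge product $[D_\sigma] \wedge \overline{\sT}_\cC$ is admissible as a current and that the Fubini-style rearrangement over fibers of $\sT_{\text{aff}(\sigma)}$ is valid. The transversality from part (a) should supply the geometric input needed to invoke Demailly's intersection theory from Section~\ref{sec:prem_currents}, while the finite-mass estimates underlying the El~Mir--Skoda extension license the extension of $\sT_\cC$ and of its individual fiber components across $X_\Sigma \setminus T_N$.
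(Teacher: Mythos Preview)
The paper does not supply its own proof of this theorem: both parts are stated with direct citations to \cite[Lemma~4.10.1]{BH} and \cite[Theorem~4.7]{BH}, and the paragraph following the statement is only an informal gloss (``Transversality implies that the intersection multiplicity is one, and Part~(b) implies that averaging with respect to the Haar measure and multiplying with $w_{\sigma}$ will indeed yield the total mass $w_{\sigma}$''). So there is nothing to compare against beyond that one-sentence summary, and your plan is exactly what that summary encodes: reduce via part~(a) to the single fiber $\overline{\pi^{-1}_{\text{aff}(\sigma)}(x)}$, compute $[D_\sigma]\wedge\overline{[\pi^{-1}_{\text{aff}(\sigma)}(x)]}=\delta_{\text{pt}}$ in local toric coordinates, and integrate against the unit-mass Haar measure.

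Two small points worth tightening. First, you correctly flag that the ``only if'' direction of part~(a) needs a compatibility hypothesis between $\cC$ and $\Sigma$; without it the one-parameter-subgroup criterion gives only $H_\tau\cap\sigma^\circ\neq\varnothing$, which is strictly weaker than $\sigma\subseteq\tau$ (e.g.\ $\sigma=\mathrm{cone}(e_1,e_2)$, $\tau=\mathrm{cone}(e_1+e_2)$). The paper's statement is indeed imprecise here and the full hypothesis lives in \cite{BH}. Second, your assertion that ``$H_\tau$ is spanned by $\{e_j:j\in I_\tau\}$'' is not automatic once you have already pinned $\sigma=\mathrm{cone}(e_1,\dots,e_k)$; you need to argue that because $N_\sigma\subseteq H_\tau\cap N$ are both saturated sublattices of $N$, a $\Z$-basis of $N_\sigma$ extends first to one of $H_\tau\cap N$ and then to one of $N$, so the coordinate choice can be made simultaneously. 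With that in hand your transversality computation and the Fubini step go through as written.
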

The preceding theorem asserts that the wedge product 
$\overline{[\pi_{\text{aff}(\sigma)}^{-1}(x)]} \wedge  \, D_{\sigma},$ for any $x\in S_{N(\sigma)}$ is a well-defined $(0,0)$-dimensional current. Transversality implies that the  intersection multiplicity is one, and Part (b) implies that averaging with respect to the Haar measure and multiplying with $w_{\sigma}$ will indeed yield the total mass $w_{\sigma}$. 








\begin{remark}\label{rem:proper_for_all}
Let $Z\subseteq (\C^*)^n$ be an irreducible subvariety and consider $\Sigma$ a unimodular fan with $\trop(Z)= |\Sigma|.$ Since, for any positive integer $m$
$$\Log(\Phi^{-1}_m(Z)) = \frac{1}{m}\Log(Z),$$
 the sets $\trop(\Phi_m^{-1}(Z))$ coincide. As a result, by Theorem~\ref{thm:tevelev} all $\overline{\Phi_m^{-1}(Z)}$'s are simultaneously complete in $X_{\Sigma},$ and intersect the toric invariant divisors of $X_{\Sigma}$ properly. In consequence, the currents $\big[\,\overline{\Phi_m^{-1}(Z)}\,\big]$ carry no mass on $X_{\Sigma}\setminus (\C^*)^n.$ And, 
$$
 \overline{\big[\Phi_m^{-1}(Z)\big]}=\big[\,\overline{\Phi_m^{-1}(Z)}\,\big],\quad \text{for $m \in \Z_{\geq 0}$},
$$
where $\Phi_0^{-1}(Z):= Z.$ Finally, in view of Lemma~\ref{lem:compactify-commute}, we conveniently obtain
$$
  \overline{\big[\Phi_m^{-1}(Z)\big]} = \big[\,\overline{\Phi_m^{-1}(Z)}\,\big] = [\Phi_m^{-1}(\bar{Z})].
$$
\end{remark}



\subsection{The Extended Tropicalisation}
For any rational fan $\Sigma \subseteq N_{\R},$  the logarithm map $\Log: T_N \longrightarrow N_{\R},$ can be continuously extended to
$$
\Log: X_{\Sigma} \longrightarrow N_{\Sigma},
$$
where $N_{\Sigma}$ denotes the quotient $X_{\Sigma}\slash S_{N(\{0\})};$ see \cite[Section 3]{Jonsson}. The \emph{tropical toric variety} $N_{\Sigma}$ can be regarded as the tropicalisation of toric variety $X_{\Sigma}$ and it was studied in  \cites{AMRT, Kajiwara, Payne} and \cite[Section 6.2]{Maclagan-Sturmfels}. The set $N_{\Sigma}$ has a natural topology and it is straightforward to see that for a polyhedral complex $\cC\subseteq N_\R \subseteq N_{\Sigma},$ 
$$
\Log^{-1}(\overline{\cC}) = \overline{\Log^{-1}(\cC)}.
$$

By \cite[Theorem A$'$]{Jonsson} for an algebraic subvariety $Y \subseteq X_{\Sigma},$ the logarithmic tropicalisation also extends to $X_{\Sigma}:$
$$
\frac{1}{\log|t|}\Log(Y) \longrightarrow \trop(Y),\quad \text{as $|t| \to \infty$},
$$
where $\trop(Y)\subseteq N_{\Sigma}$ is the tropicalisation of $Y$ in $N_{\Sigma}.$ Now assume that $Z \subseteq T_N$ is an algebraic subvariety, and $\bar{Z}$ is the closure of $Z$ in $X_{\Sigma},$ Theorem 6.2.18 in \cite{Maclagan-Sturmfels} implies that  $\trop(\bar{Z})$ coincides with the closure of $\trop(Z)\subseteq N_{\R}$ in $N_{\Sigma}.$ In consequence, 
\begin{equation}\tag{I}\label{eq:extended_log_lim}
\Log(\supp{\Phi_m^*[\bar{Z}]})= \frac{1}{m} \Log(\bar{Z}) \longrightarrow \trop(\bar{Z})= \overline{\trop(Z)},
\end{equation}
where the convergence is in the Hausdorff metric of compact sets of $N_{\Sigma}$.

\section{A Proof for  Theorem~\ref{thm:main_intro}}\label{sec:main_proof}
We are now ready to prove the main theorem of this article. The reader may find viewing the figure in Subsection~\ref{sec:toric_tori} instructive while following the steps of the proof.

\begin{theorem}\label{thm:main_conv}
Let $Z\subseteq (\CC^*)^n$ be an algebraic variety of dimension $p,$ then we have the weak convergence
$$
m^{p-n}\Phi_m^*[Z] \longrightarrow \mathscr{T}_{\trop(Z)},
$$
in $\sD'_{p,p}((\C^*)^n).$ Moreover, with the induced weights from any unimodular tropical compactification of $Z$, $\trop(Z)$ is balanced.
\end{theorem}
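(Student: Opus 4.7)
\textbf{Proof plan for Theorem~\ref{thm:main_conv}.}
Fix $Z\subseteq (\C^*)^n$ irreducible of dimension $p$, set $\sT_m:=m^{p-n}\Phi_m^*[Z]=m^{p-n}[\Phi_m^{-1}(Z)]$, and pick a unimodular, complete (and by Theorem~\ref{thm:toric_adv}(c), projective) fan $\Sigma$ with $|\Sigma|=\trop(Z)$, so that $\overline{Z}\subseteq X_\Sigma$ is a tropical compactification. By Remark~\ref{rem:proper_for_all}, the extensions $\overline{\sT_m}$ are simply $m^{p-n}[\Phi_m^{-1}(\overline{Z})]$, carrying no mass on the boundary. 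The scheme of the argument is to extract cluster values in the space of currents on $X_\Sigma$, recognise each cluster value as a tropical current with support in $\Log^{-1}(\trop(Z))$ via Proposition~\ref{prop:charac}, and then match the weights with those induced by $\overline{Z}$ on $\trop(Z)$.

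First I would establish a uniform mass bound and hence the existence of cluster values. Using the projection formula for $\Phi_m\colon X_\Sigma\to X_\Sigma$ (finite of degree $m^n$) together with the toric identities $\Phi_m^*[D_\rho]=m[D_\rho]$ and $(\Phi_m)_*[\Phi_m^{-1}(\overline{Z})]=m^n[\overline{Z}]$, the global mass of $\overline{\sT_m}$ (computed against a K\"ahler class on a projective refinement) is independent of $m$. Banach--Alaoglu then yields weak cluster values, and weak continuity of $d$ and preservation of positivity ensure each cluster value $\sT$ is a closed positive $(p,p)$-current. The support constraint $\supp\sT\subseteq \Log^{-1}(\trop(Z))$ follows from Remark~\ref{rem:supp_haus} combined with the Hausdorff convergence \eqref{eq:extended_log_lim}.

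Next I would prove $(S^1)^n$-invariance of any cluster value $\sT$. Each $\sT_m$ is invariant under the group $\mu_m^n$ of $m$-th roots of unity acting coordinatewise on $(\C^*)^n$. Given $\theta\in (S^1)^n$ and a test form $\varphi$, pick $\zeta_m\in \mu_m^n$ with $\zeta_m\to \theta$; then $(\zeta_m^{-1})^*\varphi\to (\theta^{-1})^*\varphi$ in $\mathscr{D}^{p,p}$, and the identity $\langle\sT_m,\varphi\rangle=\langle\sT_m,(\zeta_m^{-1})^*\varphi\rangle$ passes to the limit to give $\langle\sT,\varphi\rangle=\langle\theta^*\sT,\varphi\rangle$. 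Applying Proposition~\ref{prop:charac} on the open torus, $\sT$ restricts to a tropical current $\sT_{\cC'}$ for some weighted polyhedral complex $\cC'$ of pure dimension $p$ supported in $\trop(Z)$, which up to refinement we may assume subordinate to $\Sigma$.

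To pin down the weights, I would intersect with the torus-invariant cycles $D_\sigma$ for $\sigma\in\Sigma$ a $p$-dimensional cone. The projection formula applied to $\Phi_m^*[D_\sigma]=m^{p}[D_\sigma]$ and $(\Phi_m)_*[\Phi_m^{-1}(\overline{Z})]=m^n[\overline{Z}]$ yields
\[
\int_{X_\Sigma}[D_\sigma]\wedge \overline{\sT_m}\;=\;m^{p-n}\int_{X_\Sigma}[D_\sigma]\wedge[\Phi_m^{-1}(\overline{Z})]\;=\;\int_{X_\Sigma}[D_\sigma]\wedge[\overline{Z}]\;=\;w_\sigma,
\]
a constant independent of $m$. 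Passing this identity to the cluster value (this is the delicate point: the wedge product $[D_\sigma]\wedge\overline{\sT_m}$ is a transverse intersection concentrated in a small tubular neighbourhood of $\cO(\sigma)$, where Theorem~\ref{thm:weight_intersect}(a) and the equidistribution of Proposition~\ref{prop:equi_orbit} guarantee continuity of the mass) gives $\int[D_\sigma]\wedge\overline{\sT_{\cC'}}=w_\sigma$. By Theorem~\ref{thm:weight_intersect}(b), the left side equals the weight $w_\sigma(\cC')$, so $\cC'$ carries exactly the tropical weights $w_\sigma$, and hence $\sT=\sT_{\trop(Z)}$. Uniqueness of the cluster value upgrades weak sequential compactness to weak convergence of the full sequence. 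Finally, the balancing of $\trop(Z)$ with these induced weights follows from the closedness of the weak limit and Theorem~\ref{thm:closed-balanced}.

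The main obstacle is the continuity step in the weight computation: one must justify passing $\int[D_\sigma]\wedge \overline{\sT_m}\to \int[D_\sigma]\wedge\overline{\sT_{\cC'}}$ through a weak limit where the integrand is not a smooth form. A robust workaround, which I expect to be the backbone of the actual proof, is to localise the mass calculation to a neighbourhood of each orbit $\cO(\sigma)$ and replace the intersection with $D_\sigma$ by the equidistribution statement of Proposition~\ref{prop:equi_orbit}: near each of the $w_\sigma$ transverse intersection points of $\overline{Z}$ with $\cO(\sigma)$ (Theorem~\ref{thm:tevelev}(b)), the cycle $\Phi_m^{-1}(\overline{Z})$ equidistributes, after normalisation by $m^{p-n}$, to the Haar measure on $S(\sigma)$ with total mass $w_\sigma$, which is precisely the local structure of $\sT_{\trop(Z)}$ along $\Log^{-1}(\sigma^\circ)$.
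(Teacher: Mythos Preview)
Your overall architecture matches the paper's: Banach--Alaoglu compactness, support control, identification of cluster values as tropical currents, weight determination via intersection with $D_\sigma$, uniqueness, and balancing from Theorem~\ref{thm:closed-balanced}. Two steps are handled genuinely differently. For the tropical structure of a cluster value, you deduce $(S^1)^n$-invariance by approximating an arbitrary $\theta\in(S^1)^n$ with $\zeta_m\in\mu_m^n$ and passing $\langle\sT_m,(\zeta_m^{-1})^*\varphi\rangle$ to the limit; this is legitimate (the uniform local mass bound controls $\langle\sT_m,\psi_m-\psi\rangle$ when $\psi_m\to\psi$ in $C^0$) and is a pleasant shortcut compared with the paper's direct computation in Subsection~\ref{subs:weaklimit_trop} that the pairings of $\Phi_m^*[Z]$ with the Fourier forms of Proposition~\ref{prop:haar_measures} vanish for large $m$. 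The paper also proves the \emph{equality} $\supp\sS=\Log^{-1}(\trop(Z))$ via the volume estimate of Lemma~\ref{lem:local-mass}, whereas you obtain only the inclusion; this is harmless once the weights are shown positive, but it does mean you are tacitly using Proposition~\ref{prop:charac} under the weaker hypothesis $\supp\sT\subseteq\Log^{-1}(|\cC|)$, which its proof (via Demailly's second theorem of support) still accommodates.

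Where your proposal is weaker is exactly the point you flag. First a slip: a fan with $|\Sigma|=\trop(Z)$ cannot be complete when $p<n$; the paper first takes $|\Sigma|=\trop(Z)$ and then, in Step~5, enlarges to a smooth complete $\Sigma'\supseteq\Sigma$. More substantively, the paper does not justify the passage $\int[D_\sigma]\wedge\overline{\sT_m}\to\int[D_\sigma]\wedge\overline{\sS}$ by local transversality or by the equidistribution of Proposition~\ref{prop:equi_orbit} as you suggest. Instead it first proves (Step~4) that $\overline{\sZ}_{j_i}\to\overline{\sS}$ in $X_{\Sigma'}$, and then invokes \cite[Proposition~4.12]{BH}, a consequence of Demailly's regularisation theorem, to rewrite each admissible mass $\int_{X_{\Sigma'}}[D_\sigma]\wedge\sT$ as a cup product of Dolbeault cohomology classes $\{\omega_1\}\wedge\cdots\wedge\{\omega_p\}\wedge\{\sT\}$. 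Since the assignment $\sT\mapsto\{\sT\}$ is weakly continuous, the constancy $w_\sigma(\overline{Z})=w_\sigma(\overline{\sZ}_m)$ passes to every cluster value for free. Your equidistribution workaround is plausible but would require additional work to make rigorous; the cohomological route bypasses the continuity of intersection currents entirely.
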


\begin{proof}
We proceed in the following steps.
\vskip 2mm

{Step 1.} The extention of the map $\Phi_m$ to an endomorphism of $\P^n$ has degree $m,$ with respect to $O(1).$ It is well-known that $m^{p-n}$ is the correct normalisation factor for $\Phi_m^*$ to preserve the normalised total mass of the closure of $[Z]$ in $\P^n$. Therefore, all the elements of the sequence $\{m^{p-n}\Phi^*_m[\bar{Z}] \}$ have an equal normalised total mass, and as a result of the Banach--Alaoglu theorem, the sequence is locally compact with respect to the weak topology of currents. In other words, it has a convergence subsequence in $\P^n$, and by restriction on $(\C^*)^n.$
\vskip 2mm

 {Step 2.} In Subsection ~\ref{subs:conv_supp} below, we prove that the support of any cluster value $\sS$ of the sequence is given by
  $\Log^{-1}(\trop(Z)).$ Let $\Sigma$ be a unimodular fan with $|\Sigma|= \trop(Z).$ Since $\sS$ is a positive closed current of bidimension $(p,p)$, by Demailly's second theorem of support, \cite[III.2.13]{DemaillyBook1}, it can be represented as
  $$
  \sS = \sum_{\sigma \in \Sigma} \int_{x \in S_{N(\sigma)}} \big[ \mathbbm{1}_{\Log^{-1}(\sigma^{\circ})}\pi_{\mathrm{aff}(\sigma)}^{-1}(x)\big] \ d\eta_{\sigma}(x),
  $$
  for some positive measures $d\eta_{\sigma}.$
  \vskip 2mm
 {Step 3.} In Subsection~\ref{subs:weaklimit_trop} below, we show that for any cluster value $\sS$ obtained in the above steps, the corresponding measures $d\eta_{\sigma}$ must be Haar measures. This, together with Proposition~\ref{prop:charac}, implies that any cluster value $\sS$ is indeed a tropical current with support $\Log^{-1}(\trop(Z)).$
 \vskip 2mm
  Step 4. Let $\sZ_{j_i}$ be a subsequence of  $m^{p-n}\Phi_m^*[Z]$ weakly convergent to the tropical current $\sS,$ we show that in the toric variety $X_{\Sigma},$ we have
  $ \overline{\sZ}_{j_i}\longrightarrow \overline{\sS}.$
  As in Step 2, the support of $\sS$ is given by $\Log^{-1}(\trop(Z)).$ Therefore 
  $$\supp{\overline{\sS}} \supseteq\overline{\Log^{-1}(\trop(Z))} = \Log^{-1}(\overline{\trop(Z)}) = \Log^{-1}({\trop(\bar{Z})}),$$
  where in the first equality we have used a simple property of $\Log: X_{\Sigma} \longrightarrow N_{\Sigma},$ and the second equality is provided by 
  \cite[Theorem 6.2.18]{Maclagan-Sturmfels}. Assume now that $\widetilde{\sS}$ is a cluster value of $\overline{\sZ}_{j_i}.$ 
  By Equation (\ref{eq:extended_log_lim}) and Remark \ref{rem:supp_haus}, $\Log(\supp{\widetilde{\sS}}) \subseteq {\trop(\bar{Z})},$ therefore 
  $$ \supp{\widetilde{\sS}} \subseteq \Log^{-1}({\trop(\bar{Z})}) \subseteq \supp{\overline{\sS}}.$$ 
  Since $\overline{\sS}{\upharpoonright}_{(\C^*)^n} = \widetilde{\sS}{\upharpoonright}_{(\C^*)^n},$ the support of the positive closed $(p,p)$-dimensional current $\widetilde{\sS}- \overline{\sS}$ is included in $\bigcup_{i} D_i\cap \supp{\overline{\sS}},$ where $D_i$'s are the torus invariant divisors  of $X_{\Sigma}.$ In view of Theorem \ref{thm:weight_intersect}, $\bigcup_{i} D_i\cap \supp{\overline{\sS}}$ has a Cauchy--Riemann dimension less than $p,$ and by Demailly's first theorem of support, \cite[Theorem III.2.10]{DemaillyBook1}, we have $\widetilde{\sS}= \overline{\sS}.$ 
  \vskip 2mm 
  Step 5. Assume that $\mathscr{Z}_{\ell_i}$ and $\mathscr{Z}_{k_j}$ are two subsequences of $m^{p-n}\Phi_m^*[Z]$ weakly convergent to the tropical currents $\sS_1$ and $\sS_2$, respectively. We intend to show that $\sS_1 = \sS_2.$ Applying Theorem~\ref{thm:toric_adv}.b, and the toric resolution of singularities Theorem~\ref{thm:toric_adv}.a, we can find a fan $\Sigma'$ containing $\Sigma$ as a subfan such that $X_{\Sigma'}$ is a smooth projective toric variety. In view of Theorem~\ref{thm:weight_intersect}.b, it only remains to show that for any $p$-dimensional cone $\sigma \in \Sigma',$
  $$
  w_{\sigma}(\overline{\sS}_1) := \int_{X_{\Sigma'}} [D_{\sigma}] \wedge \overline{\sS}_1 \quad \overset{\mathrm{~~}}{=\joinrel=} \quad w_{\sigma}(\overline{\sS}_2):= \int_{X_{\Sigma'}} [D_{\sigma}] \wedge \overline{\sS}_2,
  $$
  where the transversality of  intersections are guaranteed by Step~2 and Theorem~\ref{thm:weight_intersect}.  For $\sigma \in \Sigma',$ let
  $$
  w_{\sigma}(\bar{Z}) = \int_{X_{\Sigma'}} [D_{\sigma}] \wedge [\bar{Z}].
  $$
  We show that $w_{\sigma}(\bar{Z})= w_{\sigma}(\overline{\sS}_1) = w_{\sigma}(\overline{\sS}_2)$, for any $\sigma \in \Sigma'.$ By Step 4, in $X_{\Sigma'}$
 
%
$$
  \overline{\sZ}_{\ell_i}\longrightarrow \overline{\sS}_1, \quad \overline{\sZ}_{k_j}\longrightarrow \overline{\sS}_2.
  $$
  By Remark~\ref{rem:proper_for_all}, $$\overline{\sZ}_{\ell_i} = {\ell_i}^{n-p}\Phi^*_{\ell_i}[\bar{Z}], \quad \overline{\sZ}_{k_j} = {k_j}^{n-p}\Phi^*_{k_j}[\bar{Z}].$$
  Since $\dim(\sigma)= p,$ we have $\dim \big(\cO(\sigma) \big)=n-p.$ As a result, the restriction $m^{p-n}\Phi^*_{m}{{\upharpoonright}_{\cO(\sigma)}}$ preserves the mass of zero dimensional integration currents in $\cO(\sigma).$ Hence, for any positive integer $m,$
  $$
  w_{\sigma}(\bar{Z}) = \int_{X_{\Sigma'}} m^{p-n}\Phi^*_{m}{{{\upharpoonright}_{\cO(\sigma)}}}([D_{\sigma}] \wedge [\bar{Z}]) = \int_{X_{\Sigma'}} [D_{\sigma}] \wedge \big(m^{p-n}\Phi^*_{m}[\bar{Z}]\big),
  $$
  where we have used the invariance of $D_{\sigma}$ under $\Phi_m^{-1}$ in the latter equality. Now we can use {\cite[Proposition 4.12]{BH}}, which is heavily based on Demailly's regularisation theorem, to reduce the calculations of the above masses to the intersection of cohomology classes. Namely, whenever $\sT$ is a closed positive current such that $ [D_{\sigma}] \wedge \sT$ is admissible,
  $$ \int_{X_{\Sigma'}} [D_{\sigma}] \wedge \sT = \big\{[D_\sigma] \wedge \sT \big\} = \{\omega_1\}\wedge \dots \wedge \{\omega_{p} \} \wedge \{\sT\}.
  $$
  In the above formula,
  \begin{itemize}
  \item [] $\{~\}$ denotes the Dolbeault cohomology class in $X_{\Sigma'},$
  \vskip 1mm
  \item [] $D_{\sigma} = D_1 \cap \dots \cap D_p$, is the intersection of toric invariant divisors, and
    \vskip 1mm

  \item [] $\omega_1, \dots , \omega_p$ are the first Chern forms corresponding to $D_1, \dots, D_p,$ respectively.
  \end{itemize}
 We note that we have chosen $X_{\Sigma'}$ to be a smooth compact complex manifold in order to  employ \cite[Proposition 4.12]{BH}. Now, using the weak continuity of the cohomology class assignment, and the above formulas,
\begin{align*}
  w_{\sigma}(\bar{Z})= \big\{[D_{\sigma}] \wedge \overline{\sZ}_{{\ell_i}} \big\} &\longrightarrow \big\{[D_{\sigma}] \wedge \overline{\sS}_1 \big\}= w_{\sigma}(\overline{\sS}_1), \\
  w_{\sigma}(\bar{Z})= \big\{[D_{\sigma}] \wedge \overline{\sZ}_{{k_j}} \big\} &\longrightarrow \big\{[D_{\sigma}] \wedge \overline{\sS}_2 \big\}= w_{\sigma}(\overline{\sS}_2).
\end{align*}
This concludes the proof of the main convergence theorem.

\vskip 2mm
 Step 6. Let us now observe that with the induced fan structure from $\Sigma$ or $\Sigma'$, and the induced weights, $\trop(Z)$ is balanced. The weak limit of a sequence of closed currents is a closed current, and Theorem~ \ref{thm:closed-balanced} asserts that a tropical current associated to any weighted polyhedral complex is closed, if and only if, the underlying weighted polyhedral complex is balanced.
\end{proof}

\subsection{Convergence of the Supports}\label{subs:conv_supp}
In this subsection we show that for any $p$-dimensional subvariety $Z \subseteq (\C^*)^n,$ the support of any cluster value of the sequence $m^{p-n}\Phi_m^*[Z]$ is given by $\Log^{-1}(\Trop(Z)).$ The inclusion in $ \Log^{-1}(\Trop(Z))$ is easy but for the converse, we need certain volume estimates. Notation-wise, it is slightly lighter to write the proofs of the following statements for the whole sequence rather than a subsequence, acknowledging that the proofs are identical.


\begin{proposition}\label{Prop:Support}
Assume that $\sS$ is the weak limit of the sequence $\sZ_m: =m^{p-n}\Phi_m^*[Z].$ Then,
$$
\supp{\sS} = \Log^{-1}(\Trop(Z)).
$$
\end{proposition}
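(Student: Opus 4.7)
The plan is to prove the two inclusions separately. The inclusion $\supp{\sS} \subseteq \Log^{-1}(\trop(Z))$ is the easy one. Indeed, $\supp{\sZ_m} = \Phi_m^{-1}(Z)$ satisfies $\Log(\supp{\sZ_m}) = \tfrac{1}{m}\Log(Z)$, which converges to $\trop(Z)$ in the Hausdorff metric on compact subsets of $\R^n$ by Bergman's theorem; since $\Log$ is proper with compact $(S^1)^n$-fibres, this lifts to $\supp{\sZ_m} \to \Log^{-1}(\trop(Z))$ in the Hausdorff metric on compact subsets of $(\C^*)^n$, to which Remark \ref{rem:supp_haus} (applied locally) supplies the inclusion.

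For the opposite inclusion, I would first show that $\sS$ is $(S^1)^n$-invariant. Each $\sZ_m$ is invariant under the subgroup $\mu_m^n \subseteq (S^1)^n$ of $m$-th roots of unity, since $\Phi_m \circ R_\eta = \Phi_m$ for every $\eta \in \mu_m^n$. Given $\theta \in (S^1)^n$, pick $\eta_m \in \mu_m^n$ with $\eta_m \to \theta$; for any test form $\varphi$, the convergence $R_{\eta_m *}\varphi \to R_{\theta *}\varphi$ in $\mathscr{D}^{p,p}((\C^*)^n)$, combined with the boundedness of $\lVert \sZ_m \rVert$ on $\supp{\varphi}$, yields
\[
\langle R_\theta^* \sS, \varphi \rangle
= \lim_m \langle \sZ_m, R_{\theta *}\varphi \rangle
= \lim_m \langle \sZ_m, R_{\eta_m *}\varphi \rangle
= \lim_m \langle \sZ_m, \varphi \rangle
= \langle \sS, \varphi \rangle.
\]
So $\supp{\sS}$ is $(S^1)^n$-invariant; since $\Log$ is proper, $\supp{\sS} = \Log^{-1}(\Log(\supp{\sS}))$ and $\Log(\supp{\sS})$ is closed in $\R^n$. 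It therefore suffices to prove that every $x_0$ in a relative interior $\tau^\circ$ of a maximal cell of $\trop(Z)$ belongs to $\Log(\supp{\sS})$.

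For such $x_0$, fix $\varepsilon > 0$ small enough that $B(x_0, \varepsilon) \cap \trop(Z) \subseteq \tau^\circ$. The target is the local mass bound
\[
\lVert \sZ_m \rVert_{\Log^{-1}(B(x_0, \varepsilon))} \;\geq\; c_\varepsilon > 0, \qquad m \gg 0.
\]
Using $\Phi_m^* \omega_0 = m^2 \omega_0$ and that $\Phi_m\colon(\C^*)^n \to (\C^*)^n$ is an unramified $m^n$-to-one cover, a change of variables recasts this as the volume estimate
\[
\frac{1}{m^p}\,\mathrm{vol}_{\omega_0^p}\!\bigl( Z \cap \Log^{-1}(mB(x_0, \varepsilon)) \bigr) \;\geq\; c_\varepsilon.
\]
Granted this, taking a non-negative cutoff $\chi$ compactly supported in $\Log^{-1}(B(x_0, 2\varepsilon))$ with $\chi \equiv 1$ on $\Log^{-1}(B(x_0, \varepsilon))$, weak convergence against the positive form $\chi\omega_0^p$ gives
\[
\lVert \sS \rVert_{\Log^{-1}(B(x_0, 2\varepsilon))}
\;\geq\; \lim_m \langle \sZ_m, \chi\omega_0^p\rangle
\;\geq\; \liminf_m \lVert \sZ_m \rVert_{\Log^{-1}(B(x_0, \varepsilon))}
\;\geq\; c_\varepsilon.
\]
Thus $B(x_0, 2\varepsilon) \cap \Log(\supp{\sS}) \neq \varnothing$; letting $\varepsilon \downarrow 0$ and using closedness of $\Log(\supp{\sS})$ places $x_0$ in $\Log(\supp{\sS})$.

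The principal obstacle is the volume estimate itself. I would prove it by passing to a tropical compactification $\overline{Z} \subseteq X_\Sigma$ with $|\Sigma| = \trop(Z)$: by Theorem \ref{thm:tevelev}(b), $\overline{Z}$ meets the orbit $\cO(\tau)$ in a nonempty $0$-dimensional scheme of length equal to the tropical weight $w_\tau > 0$. In toric coordinates around any such intersection point, $\overline{Z}$ is parametrised by $p$ local complex variables along which $\Log$ asymptotically linearises onto the affine span of $\tau$, so that $Z \cap \Log^{-1}(mB(x_0, \varepsilon))$ fibres over a $p$-dimensional region of $\R^n$-volume $\Theta(m^p)$ with transverse extent of bounded density; integration then supplies the required lower bound.
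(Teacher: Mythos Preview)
Your argument for the inclusion $\supp{\sS}\subseteq\Log^{-1}(\trop(Z))$ is the same as the paper's. For the reverse inclusion, however, you take a genuinely different route. The paper never invokes $(S^1)^n$-invariance here (that is deferred to a separate Fourier argument in Lemma~\ref{lem:fourier}), and it never appeals to a tropical compactification for the mass estimate. Instead, for any $b\in\Log^{-1}(\trop(Z))$ it locates nearby points $b_m\in\Phi_m^{-1}(Z)$, uses the $m^n$-sheeted covering structure of $\Phi_m$ to split a neighbourhood into pieces $K_{m,i}$ that are biholomorphic to a single small open set, and applies the Local Parametrisation Theorem to bound the Euclidean mass of $\Phi_m^{-1}(Z)$ in each $K_{m,i}$ from below by $C(\epsilon^2/m)^p$. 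Summing over the $m^n$ pieces and normalising by $m^{p-n}$ yields the uniform lower bound directly, with no passage to $X_\Sigma$.

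Your strategy---invariance to reduce to $x_0\in\tau^\circ$, the invariant K\"ahler form $\omega_0$ with $\Phi_m^*\omega_0=m^2\omega_0$ to convert the mass bound into a volume growth statement for $Z$ along the ray of $\tau$, and then reading off that growth from the proper intersection $\overline{Z}\cap\cO(\tau)$---is conceptually attractive and should work, but the last paragraph is where the actual analysis lives and it is not yet a proof. The claims that $\overline{Z}$ ``is parametrised by $p$ local complex variables'' near a point of $\overline{Z}\cap\cO(\tau)$ and that $\Log$ ``asymptotically linearises onto the affine span of $\tau$'' need to be made precise: the intersection is only known to be proper, not transverse, so you must argue through a finite branched cover; and turning the statement about the image $\Log(Z)$ having $\R^p$-volume $\Theta(m^p)$ into a lower bound for $\int_{Z\cap\Log^{-1}(mB)}\omega_0^p$ requires controlling the Jacobian of $\Log|_Z$ (equivalently, bounding the angle between $T_zZ$ and the $(S^1)^n$-orbit directions) uniformly as $z$ runs to infinity along $\tau$. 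Both points are tractable in the toric chart you indicate, but as written they are asserted rather than verified. By contrast, the paper's approach sidesteps these asymptotics entirely: the Local Parametrisation Theorem gives the needed lower bound at a single scale, and the covering symmetry propagates it.
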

We give a proof of the preceding proposition after the proof of Lemma \ref{lem:local-mass-ball}.

\begin{corollary}\label{cor:support-Haus}
Assume that $\sS$ is the weak limit of the sequence $\sZ_m = m^{p-n}\Phi_m^*[Z],$ then
$$
\supp{\sZ_m} \longrightarrow \supp{\sT_{\trop(Z)}},
$$
in the Hausdorff metric of compact sets of $(\C^*)^n.$
\end{corollary}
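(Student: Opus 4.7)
The plan is to reduce the Hausdorff-metric convergence of supports to two pointwise statements and discharge each using Bergman's theorem together with the equidistribution of $m$-th roots of unity that already powered the proof of Proposition~\ref{Prop:Support}. By the very definition of $\sT_{\trop(Z)}$, its support equals $\Log^{-1}(|\trop(Z)|)$, and Proposition~\ref{Prop:Support} identifies this with $\supp{\sS}$. Writing $S := \Log^{-1}(\trop(Z))$ and fixing a compact $K \subseteq (\C^*)^n$, I will establish: (i) every cluster point in $K$ of sequences $b_m \in \supp{\sZ_m}$ lies in $S$; and (ii) every $b \in S \cap K$ is approximable by a sequence $b_m \in \supp{\sZ_m}$. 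Together with the compactness of $K$, these two facts imply $d_H(\supp{\sZ_m} \cap K,\, S \cap K) \to 0$.

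For direction (i), the key observation is $\supp{\sZ_m} = \Phi_m^{-1}(Z)$, which yields $\Log(\supp{\sZ_m}) = \tfrac{1}{m}\Log(Z)$. Bergman's theorem then furnishes $\tfrac{1}{m}\Log(Z) \to \trop(Z)$ in the Hausdorff metric on compacts of $\R^n$, and the continuity of $\Log$ forces any cluster point $b = \lim b_{m_k}$ to satisfy $\Log(b) \in \trop(Z)$, whence $b \in S$.

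For direction (ii), I will adapt the construction already appearing inside the proof of Proposition~\ref{Prop:Support}. Given $b \in S \cap K$, Bergman's theorem produces $z_m \in Z$ with $\tfrac{1}{m}\Log(z_m) \to \Log(b)$. The fibre $\Phi_m^{-1}(z_m) \subseteq \supp{\sZ_m}$ consists of $m^n$ points with common log-image $\tfrac{1}{m}\Log(z_m)$, differing by coordinate-wise $m$-th roots of unity; since these roots become $O(1/m)$-dense in $(S^1)^n$ as $m \to \infty$, I can select a preimage $b_m$ whose argument is $O(1/m)$-close to $\mathrm{Arg}(b)$. Then $\Log(b_m) \to \Log(b)$ and $\mathrm{Arg}(b_m) \to \mathrm{Arg}(b)$, so $b_m \to b$ in $(\C^*)^n$. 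The only potential subtlety --- and not a genuine obstacle --- is the \emph{uniformity} of this approximation as $b$ ranges over $S \cap K$; but the argument discrepancy is bounded by $O(1/m)$ independently of $b$, and Bergman's convergence is Hausdorff-uniform on compacts of $\R^n$, so a routine diagonal argument handles uniformity and completes the proof.
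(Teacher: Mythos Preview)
Your proposal is correct and follows essentially the same route as the paper's own proof, which is a terse two-sentence argument invoking ``Bergman's theorem and geometry of $\Phi_m$'' for the Hausdorff convergence $\supp{\sZ_m}\to\Log^{-1}(\trop(Z))$ and then the identification $\Log^{-1}(\trop(Z))=\supp{\sT_{\trop(Z)}}$. Your directions (i) and (ii) are exactly the unpacking of that one-line claim: (i) is the Bergman half, and (ii) is the ``geometry of $\Phi_m$'' half, namely the $O(1/m)$-density of $m$-th roots of unity in the argument coordinates. One small remark: your invocation of Proposition~\ref{Prop:Support} to identify $S$ with $\supp{\sS}$ is correct but not actually needed for the corollary's conclusion, since $\supp{\sT_{\trop(Z)}}=\Log^{-1}(|\trop(Z)|)$ holds by definition of the tropical current (and the hypothesis on $\sS$ plays no role in either proof).
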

\begin{proof}
We note that by Bergman's theorem and the definition of $\Phi_m$
  $$\supp{\sZ_m} \longrightarrow \Log^{-1}(\trop(Z)),$$
  in the Hausdorff metric and by Proposition~\ref{Prop:Support}, $\Log^{-1}(\trop(Z)) = \supp{\sT_{\trop(Z)}}.$
\end{proof}
\begin{lemma}\label{lem:local-mass-annulus} For any positive integer $m$ sufficiently large, small $\epsilon > 0,$ and $b_m \in \Phi_m^{-1}(Z)$ we have
$$
\int_{\Phi_{m}^{-1}\bigl(\Phi_m(B_{\epsilon}(b_m))\bigr)}m^{p-n}\Phi_m^*[Z] \wedge \beta ^ p \geq C \epsilon^{2p},
$$
where $\beta = dd^c \lVert z \rVert^2,$ and $C$ is a positive constant and independent of $m$.
\end{lemma}

\begin{proof} Let $b_m = (b_{m,1}, \dots, b_{m,n}), \, a_m= \Phi_m(b_m)=(a_{m,1}, \dots, a_{m,n})\in Z,$ and note that $|b_{m,j}| = |a_{m,j}|^{1/m}$. For each $j=1, \dots, n$, we set $\rmin_{j} := (|b_{m,j}|-\frac{\epsilon}{\sqrt{2}})$ and $\rmax_{j} := (|b_{m,j}|+\frac{\epsilon}{\sqrt{2}})$. For positive real numbers $\rmin<\rmax$, we will denote the annulus
\[C_{\rminm, \rmax}:= \{z\in \CC \,|\, \rmin<|z| <\rmax \}.\]
We also set
\[C_{\rminm, \rmax}^* := C_{\rminm, \rmax} \setminus \R_+\]
which is a contractible open subset of $C_{\rminm, \rmax}$. When $m$ is large, $\Phi_m(B_\epsilon(b_m))$ contains the multi-annulus given by
$$
B_m:=\prod_{j=1}^n C_{\rmin^m_{j}, \rmax^m_{j}}.
$$

Let $B_m^* = \prod_{j=1}^n C^*_{\rmin^m_{j}, \rmax^m_{j}}$. Since $\Phi_m$ is a covering and $B_m^*$ is contractible, we can partition $\Phi_{m}^{-1}\bigl(B_m^*)$ into a disjoint union of open sets $K_{m,1}, \dots , K_{m,m^n}$ such that for each $i \in [m^n]: = \{1, \dots, m^n\},$
$$
{\Phi_m}{{\upharpoonright}_{K_{m,i}}}:K_{m,i} \longrightarrow B_m^*,
$$
is a homeomorphism. Each $K_{m,i}$ can be parametrised in polar coordinates with $(0, {2\pi}/{m})^n \times \prod_{j\in [n]}(\rmin_j , \rmax_j).$
For a fixed $m$, and any $i\in [m^n]$,
$$
\int_{K_{m,i}} \Phi_m^*[Z] \wedge \beta^p = \int_{\Phi_m(K_{m,i})} [Z] \wedge (\Phi_m)_* \beta^p = \int_{B^*_m} [Z] \wedge (\Phi_m^{-1})^*(\beta^p).
$$
We set $Z_{i} = \Phi_m^{-1}(Z) \, \cap \, K_{m, i}$. Since by Lelong's theorem \cite[Theorem III.2.7]{DemaillyBook1}, the singular set of $Z$ does not charge any mass, we assume that $Z$ is smooth and consider the parametrisation
$$
\boldsymbol{\tau}:(S^1)^p   \times U \longrightarrow \Phi_m^{-1}(Z) \cap K_{m,1},
$$
for $U \subseteq (\RR^+)^p.$ Let $\{ 1, \zeta, ...,\zeta^{m-1}\}$ be all different $m$-th roots of unity, and for $\ell= (\ell_1, \dots, \ell_n)\in \ZZ^n \cap [0,m-1]^n,$ define $\lambda^{\ell}(\zeta):= (\zeta^{\ell_1}, \dots, \zeta^{\ell_n}).$ The component-wise multiplication $\boldsymbol{\tau}\cdot \lambda^\ell(\zeta)$ for different $\ell$ parametrises all the manifolds $\Phi_m^{-1}(Z) \cap K_{m,i}$ for $i=1, \dots, m^n.$ Therefore, the equality $$\big(\boldsymbol{\tau}\cdot \lambda^\ell(\zeta)\big)^* dd^c\lVert z\rVert^2 = \boldsymbol{\tau} ^*\,dd^c \lVert z\rVert^2,$$
implies that all $Z_i$'s have the same mass. We now claim that for each $i$
\begin{equation}\label{C-ell}\tag{II}
[Z_{i}] \wedge \beta^p \geq C_{i} \big(\frac{\epsilon^2}{m}\big)^p.
\end{equation}
for a constant $C_{i}.$ As we noted before, by Bergman's theorem and the definition of $\Phi_m$ we have $\supp{\sZ_m} \longrightarrow \Log^{-1}(\trop(Z)).$ As a result, by shrinking $\epsilon$, if necessary, there exists $\sigma \in \trop(Z),$ and a fiber $\pi^{-1}_{\sigma}(x)$ for some $ x\in (S^1)^{n-p},$ such that at a point $w\in \pi_{\sigma}^{-1}(x),$  the projection $\textrm{proj}: Z_i \longrightarrow T_w \pi_{\sigma}^{-1}(x)$ is surjective. It is crucial to note that, because of the Hausdorff convergence,  for larger $m$'s we do not need to shrink $\epsilon$ any further for such surjectivity to hold. Using this projection, we can find coordinates $w= (w', w'')$ and polydiscs $w'\in \Delta'\subseteq \C^p, w''\in \Delta''\subseteq \C^{n-p}$ of radii $r'=\frac{\epsilon}{\sqrt{2}}$ and $r''$ with $r'\leq Cr''$ for $C>0$ large, such that the projection $\pi: Z_{i}\,\cap\big(\Delta' \times \Delta'' \big)\longrightarrow\Delta'$ is also surjective. Therefore,
$$
\int_{Z_{i}} (dd^c \lVert w \rVert^2)^p \geq \int_{Z_{i}} \pi^* (dd^c \lVert w' \rVert^2)^p \geq \, \int_{\Delta'} (dd^c \lVert w' \rVert^2)^p.
$$
Since $K_{m,i}\supseteq Z_{i}$ can be parametrised by $(0, {2\pi}/{m})^n \times \prod_{j\in [n]}(\rmin_j , \rmax_j)$, we also can parametrise $\Delta',$ with $(0, {2\pi}/{m})^p \times (0, \frac{\epsilon}{\sqrt{2}})^p$, such that the Jacobian of the parametrisation does not depend on $m$. We have
$$
\int_{\Delta'} (dd^c \lVert w' \rVert^2)^p =\int_{(0, {2\pi}/{m})^p \times (0, \frac{\epsilon}{\sqrt{2}})^p} r_{[p]} dr_{[p]}d\theta_{[p]}=\big(\frac{\pi \epsilon^2}{2m} \big)^p.
$$
We obtain Equation~(\ref{C-ell}), by taking into account that the change of coordinates $z\mapsto w,$ contributes to the calculation of mass by multiplying a constant. Finally, to obtain the main assertion, we note that there are $m^n$ components of $K_{m,i},$ all with the same mass, and the normalising factor is $m^{p-n}$.

\end{proof}

\begin{lemma}\label{lem:local-mass-ball} For any positive integer $m$ sufficiently large, small $\epsilon > 0,$ and $b_m \in \Phi_m^{-1}(Z)$ we have
$$
\int_{B_{\epsilon}(b_m)}m^{p-n}\Phi_m^*[Z] \wedge \beta ^ p \geq C_{b_m}' \epsilon^{2p+n},
$$
where $\beta = dd^c \lVert z \rVert^2,$ and $C'$ is a positive constant depending on $b_m$.
\end{lemma}
\begin{proof}
Following the proof and the notation of Lemma \ref{lem:local-mass-annulus}, we find a lower bound for the mass of $\sZ_m= m^{p-n}\Phi_m^*[Z]$ in the ball $B_{\epsilon}(b_m).$ Let $b_m= (b_{m,1}, \dots , b_{m,n}).$ By previous lemma, the mass of $\sZ_m$ in the multi-annulus $A= \prod_{j=1}^n C_{\rmin_{j}, \rmax_{j}}$,  is greater than $C\epsilon^{2p}.$ We can assume that $|b_{m,j}| \gg \epsilon,$ for all $j=1,\dots, n.$ Note that for each $j,$  the annulus in the $j$-th coordinate of the multi-annulus has an angle varying between $0$ and $2\pi,$ which gives rise to the outer circumference of length $2\pi (|b_j|+ \frac{\epsilon}{\sqrt{2}}).$ Therefore, in each coordinate, the angle corresponding to $2\epsilon$ portion of the outer circumference can be approximated by  
$$\theta_j= \frac{2\pi\epsilon}{2(|b_{m,j}|+\frac{\epsilon}{\sqrt{2}})\pi}\geq  \frac{\epsilon}{2|b_{m,j}|}.$$ 
We intend to estimate the portion of mass of $\sZ_m$ around $b_m$ sliced within the angles $\theta_1,\dots, \theta_n.$ Since the mass of $\sZ_m$ in $A$ equals $C\epsilon^{2p},$ and components of $\Supp(\sZ_m)\cap A$  are divided symmetrically in $A,$ we obtain the following lower bound
$$
C\epsilon^{2p} \times \prod_{j=1}^{n} \frac{\theta_j}{2\pi} = C\epsilon^{2p}\prod_{j=1}^n  \big(\frac{\epsilon}{4\pi|b_{m,j}|} \big)^n = \frac{C}{(4 \pi)^n |b_{m,1}\dots b_{m,n}|} \epsilon^{2p+n}:=C_{b_m}\epsilon^{2p+n}.
$$

\end{proof}

\begin{remark}
As the above lemma suggests, the \emph{Lelong number} of any tropical current at any point of its support is, in fact, zero. See \cite[Section III.5]{DemaillyBook1} for a definition.
\end{remark}

\begin{proof}[Proof of Proposition \ref{Prop:Support}]
We first show the inclusion $\supp \sS \subseteq \Log^{-1}\big(\Trop(Z)\big).$ We note that if for any sufficiently large integer  $m$, $z$ has a neighbourhood that does not intersect $\Supp (\sZ_{m}),$ then $z \notin \Supp(\sS).$ However, by Bergman's theorem
$$
\Log \big(\supp{\Phi^*_m[Z]}\big) = \frac{1}{m} \Log \big(\supp{[Z]}\big) \longrightarrow \trop(Z),
$$
in the Hausdorff metric. Therefore, any point outside $\Log^{-1}\big(\trop(Z)\big)$ is outside of $\supp{\sZ_{m}}$ for any sufficiently large $m$. This implies that $\supp \sS \subseteq \Log^{-1}\big(\Trop(Z)\big).$
\vskip 2mm
To prove the converse inclusion $\Log^{-1}(\Trop(Z)) \subseteq \supp \sS,$
consider $b\in \Log^{-1}\big(\Trop(Z)\big).$ We show that for any $\epsilon>0$, there is a test form $\varphi \in \sD^{p,p}((\C^*)^n)$, such that its (compact) support contains the open ball of radius $2\epsilon$ centered at $b,$ $B_{2\epsilon}(b),$ and
$$
\langle \sS , \varphi \rangle  > 0.
$$
Let $\beta =dd^c \lVert z \rVert^2 = dd^c [|z_1|^2 + \dots + |z_n|^2].$ For any $\epsilon>0,$ and sufficiently large $m,$ there exists a point $$b_m \in \Phi_m^{-1}(Z) \subseteq \Log^{-1}\big(\Log(\Phi_m^{-1}(Z)) \big),$$
such that
$ \mathrm{dist}\big(b_m,b) < \epsilon.$
This implies the inclusion $B_{\epsilon}(b_m) \subseteq B_{2\epsilon}(b)$. By Lemma~\ref{lem:local-mass-ball}, we have that for any sufficiently large $m$, there exists a constant $C_{b_m}:=\frac{C}{(4 \pi)^n |b_{m,1}\dots b_{m,n}|}>0$ such that
$$
\int_{B_{2\epsilon}(b)} \sZ_{m} \wedge \beta^p > \int_{B_{\epsilon}(b_m)} \sZ_{m} \wedge \beta^p >C_{b_m}\epsilon^{2p+n}.
$$
As $b_m\to b,$ the sequence $\big\{C_{b_{m}}\big\} \cup \{C_b\}\subseteq \R$ is compact and has a minimum value, say $C',$ and we have
$$
\int_{B_{2\epsilon}(b)} \sZ_{m} \wedge \beta^p  \geq C' \epsilon^{2p+n},
$$
for all $m$. Now define the test-form $\varphi := \chi\beta^p,$ where $\chi$ is a non-negative smooth function with compact support, equal to $1$ in $B_{2\epsilon}(b)$ and vanishing outside $B_{3\epsilon}(b).$ As a result, the convergence of the complex sequence $\langle \sZ_{m}, \varphi \rangle \longrightarrow \langle  \sS, \varphi \rangle$, together with the above inequality implies that
$$
\langle \sS , \varphi \rangle \geq C' \epsilon^{2p+n}.
$$

\end{proof}

\subsection{Any Cluster Value is a Tropical Current}\label{subs:weaklimit_trop}
In Lemma~\ref{Prop:Support}, we used the volume form $(dd^c\lVert z \rVert^2)^p$ that adds the mass in different components $K_{m,1}, \dots, K_{m,m^n}$. In contrast, in the following lemma we use the ``Fourier differential forms'' which are sensitive to the change of phases in polar coordinates, and we show that Fourier measure coefficients finally vanish at every non-zero degree.
\begin{lemma}\label{lem:fourier}
Let $\sS$ be the weak limit of sequence $\sZ_{m}:=m^{p-n} \Phi^*_m[Z],$ then $\sS$ is a tropical current. \end{lemma}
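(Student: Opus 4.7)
The plan is to verify the hypothesis of Proposition~\ref{prop:haar_measures} by probing $\sS$ against tailored Fourier test forms, and to exploit the fact that the pushforward of such a form under $\Phi_m$ vanishes for all sufficiently large $m$. Step~2 of the main proof already provides the decomposition
$$
\sS = \sum_{\sigma \in \Sigma} \int_{x \in S_{N(\sigma)}} \mathbbm{1}_{\Log^{-1}(\sigma^\circ)}\big[\pi_{\aff(\sigma)}^{-1}(x)\big]\, d\eta_\sigma(x),
$$
for positive measures $\eta_\sigma$, so it suffices to show that each $\eta_\sigma$ is a non-negative multiple of the Haar measure on $S_{N(\sigma)}$; the resulting constants $w_\sigma := \eta_\sigma(S_{N(\sigma)})$ then assemble $\sS$ into the tropical current of a weighted polyhedral structure on $\trop(Z)$.

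By Proposition~\ref{prop:haar_measures}, each $\eta_\sigma$ is characterised by its Fourier coefficients, which can be extracted by pairing $\sS$ with test forms of the shape
$
\omega_\nu := e^{-i\langle \nu,\theta \rangle}\rho(r)\, d\theta_I \wedge dr_I,
$
with $|I|=p$ and $\rho$ smooth, compactly supported in a small lift of $\sigma^\circ$ disjoint from the other strata (so that only the $\sigma$-piece of the decomposition contributes). Weak convergence together with the definition of $\Phi_m^*$ give
$$
\langle \sS, \omega_\nu \rangle = \lim_{m\to\infty} m^{p-n}\big\langle [Z], (\Phi_m)_*\omega_\nu \big\rangle.
$$
The crux is that on $(\C^*)^n$ the map $\Phi_m$ is an unramified $m^n$-sheeted covering whose local inverses are $z \mapsto \zeta^\ell z^{1/m}$ for $\ell \in (\Z/m\Z)^n$ and $\zeta$ a primitive $m$-th root of unity. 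In polar coordinates these inverses act by $\theta \mapsto \theta/m + 2\pi\ell/m$ and $r \mapsto r^{1/m}$, contributing a Jacobian factor $m^{-2p}\prod_{j\in I}r_j^{1/m-1}$ on $d\theta_I \wedge dr_I$ and a phase $e^{-2\pi i \langle \nu,\ell\rangle/m}$ on the exponential. Summing over the $m^n$ branches produces the Dirichlet-type kernel
$$
\sum_{\ell \in (\Z/m\Z)^n} e^{-2\pi i \langle \nu,\ell\rangle/m} \;=\; m^n\,\mathbbm{1}_{\nu \in m\Z^n}.
$$

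Consequently, for any fixed $\nu \neq 0$, as soon as $m > \max_j |\nu_j|$ one has $(\Phi_m)_*\omega_\nu \equiv 0$ and so $\langle \sZ_m, \omega_\nu\rangle = 0$. Passing to the limit, $\langle \sS, \omega_\nu\rangle = 0$, and Proposition~\ref{prop:haar_measures} forces every non-trivial Fourier coefficient of $\eta_\sigma$ to vanish; positivity then identifies $\eta_\sigma$ with a non-negative multiple of the Haar measure on $S_{N(\sigma)}$, which concludes the proof. The main obstacle is the bookkeeping in the pushforward calculation — tracking Jacobians, orientations, and the phase across the $m^n$ local inverses simultaneously — together with the verification that a single cut-off $\rho$ can be localised to isolate one stratum $\sigma$ without boundary bleed from adjacent cells of $\trop(Z)$.
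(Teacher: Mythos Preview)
Your proposal is correct and follows essentially the same route as the paper: both arguments probe the limit with the Fourier test forms of Proposition~\ref{prop:haar_measures} and obtain the vanishing of the non-zero Fourier coefficients from the roots-of-unity cancellation $\sum_{\ell\in(\Z/m\Z)^n} \zeta^{\langle \ell,\nu\rangle}=0$ once $m>\max_j|\nu_j|$. The only cosmetic difference is that you compute $(\Phi_m)_*\omega_\nu$ and integrate over $Z$, whereas the paper parametrises the $m^n$ sheets of $\Phi_m^{-1}(Z)$ by $\boldsymbol{\tau}\cdot\lambda^\ell(\zeta)$ and integrates $\omega$ there directly; these are dual formulations of the same identity $\langle \Phi_m^*[Z],\omega\rangle=\langle [Z],(\Phi_m)_*\omega\rangle$.
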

\begin{proof}
By Proposition~\ref{Prop:Support}, the support of $\sS$ has the form $\Log^{-1}(\Trop(Z)).$ Let us set $\cC := \trop(Z).$ By Demailly's second theorem of support, \cite[III.2.13]{DemaillyBook1}, there are measures $\eta_{\sigma}$ such that
$$
\sS= \sum_{\sigma \in \cC} \int_{x \in S_{N(\sigma)}} \big[ \mathbbm{1}_{\Log^{-1}(\sigma^{\circ})}\pi_{\text{aff}(\sigma)}^{-1}(x)\big] \ d\eta_{\sigma}(x).
$$

By definition of tropical currents, we need to prove that all the non-zero degree Fourier coefficients of $\eta_{\sigma}$ for all $\sigma \in \Sigma$ vanish, and therefore are Haar measures. To see this we use Proposition~\ref{prop:haar_measures}. We prove this by showing that when $m$ is sufficiently large, 
$$
\langle \Phi_m [Z] , \omega_I \rangle =0,
$$
for any fixed $(p,p)$-differential form $\omega$ given in polar coordinates by
\begin{equation*}\label{eq:fourier-forms2}
\omega_I= \exp(-i\langle \nu , \theta \rangle ) \rho(r) d\theta_{I} \wedge dr_{I},
\end{equation*}
where $\nu\in \Z^n \setminus \{0\},$ $I \subseteq [n],$ with $|I|=p,$ $\theta= (\theta_1, \dots, \theta_n)$ and $r= (r_1, \dots, r_n)$ are polar coordinates, and $\rho:\RR^n \to \RR$ is a smooth and appropriately chosen, and $\omega$ has the support given by $\Phi_m^{-1}(B_{\epsilon}(a_m))$ for $a_m \in Z.$ For convenience, we change the coordinates to have $\omega_I = x^{\nu} \rho(r)~ dr_{I} \wedge dx_{I},$ with $(x, r) \in (S^1)^n \times (\RR^+)^n \simeq (\CC^*)^n.$
\vskip 2mm
Let us recycle the notation in the proof of Lemma~\ref{lem:local-mass-annulus}: $b_m = (b_{m,1}, \dots, b_{m,n}),\, a_m= \Phi_m(b_m)=(a_{m,1}, \dots, a_{m,n})\in Z,$ and we partition $\Phi_{m}^{-1}\bigl(B_m^*)$ into a disjoint union of open sets $K_{m,1}, \dots , K_{m,m^n}.$
Since by Lelong's theorem \cite[Theorem III.2.7]{DemaillyBook1}, the singular set of $Z$ does not charge any mass, we assume that $Z$ is smooth and consider the parametrisation
$$
\boldsymbol{\tau}: (S^1)^p \times U \longrightarrow \Phi_m^{-1}(Z) \cap K_{m,1},
$$
for $U\subseteq (\RR^+)^p.$ We let $\{ 1, \zeta, ...,\zeta^{m-1}\}$ be all different $m$-th roots of unity, and for $\ell= (\ell_1, \dots, \ell_n)\in \ZZ^n \cap [0,m-1]^n,$ define $\lambda^{\ell}(\zeta):= (\zeta^{\ell_1}, \dots, \zeta^{\ell_n}).$ The component-wise multiplication $\boldsymbol{\tau}\cdot \lambda^\ell(\zeta)$ for different $\ell$ parametrises all the manifolds $\Phi_m^{-1}(Z) \cap K_{m,i}$ for $i=1, \dots, m^n.$
We write
$$
\langle \Phi^*_m [Z], \omega_I \rangle = \sum_{i=1}^{m^n} \int_{\Phi_m^{-1}(Z) \,\cap \, K_{m,i}} \omega_I = \sum_{\ell \in \ZZ^n \cap [0,m-1]^n}\int_{(S^1)^p \times U} \big(\boldsymbol{\tau}\cdot\lambda^\ell(\zeta)\big)^* \omega_I.
$$
For each $\ell=(\ell_1, \dots , \ell_n),$
\begin{align*}
\big(\boldsymbol{\tau}\cdot\lambda^\ell(\zeta)\big)^*(x_j) &= x_j \circ [\boldsymbol{\tau} \cdot\lambda^{\ell}(\zeta)] = \zeta^{\ell_j}( \boldsymbol{\tau}^*x_j), \\ \big(\boldsymbol{\tau}\cdot\lambda^\ell(\zeta)\big)^*(dx_j)&= dx_j \circ [\boldsymbol{\tau} \cdot \lambda^{\ell}(\zeta)] = \zeta^{\ell_j}( \boldsymbol{\tau}^* dx_j).
\end{align*}
As a consequence, the term $x^\nu$ in $\omega_I$ discharges $\zeta^{\langle \ell, \nu \rangle},$ and $dx_I$ produces $\zeta^{\langle \ell, \mathbbm{1}_I \rangle},$ where $\mathbbm{1}_I$ is the characteristic vector of $I\subseteq [n].$ We conclude
$$
\langle \Phi^*_m [Z]\, , \, \omega_I \rangle =
\sum_{\ell \in \ZZ^n \cap [0,m-1]^n} \zeta^{\langle \ell \, , \, \nu + \mathbbm{1}_I \rangle } \int_{K_{m,1}} \Phi_m^*[Z]\wedge \omega_I.
$$
The final sum, however, vanishes for a fixed $\nu= (\nu_1, \dots , \nu_n)$ and large $m,$ since for $m \not\vert \nu_j,$
$$
\sum_{i=0}^{m-1} (\zeta^i)^{\nu_j}= \frac{(\zeta^m)^{\nu_j}-1}{\zeta^{\nu_j}-1} = 0.
$$

\end{proof}

\section{Applications}\label{sec:apps}
\subsection{From Tori to Toric Varieties}\label{sec:toric_tori}
Let us prove Theorem~\ref{thm:intro_toric} of the introduction.
\begin{theorem}\label{thm:toric_tori}
Let $Z\subseteq (\CC^*)^n$ be an irreducible subvariety of dimension $p,$ and $\bar{Z}$ a tropical compactification of $Z$ in the smooth projective toric variety $X_{\Sigma}$. Then,
$$
\frac{1}{m^{n-p}}\Phi_m^*[\bar{Z}] \longrightarrow \overline{\mathscr{T}}_{\trop(Z)}, \quad \text{as $m\to \infty$},
$$
where $\Phi_m: X_{\Sigma}\longrightarrow X_{\Sigma}$ is the extension of $\Phi_m: (\C^*)^n \longrightarrow (\C^*)^n,$ and $\overline{\mathscr{T}}_{\trop(Z)}$ is the extension by zero of $\mathscr{T}_{\trop(Z)}$ to $X_{\Sigma}.$
\end{theorem}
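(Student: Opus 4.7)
The plan is to reduce the theorem to Theorem~\ref{thm:main_conv} together with Step~4 of its proof, using Remark~\ref{rem:proper_for_all} as the bridge between the torus and the toric picture.

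First I would combine Lemma~\ref{lem:compactify-commute} with Remark~\ref{rem:proper_for_all} to identify
\[
\frac{1}{m^{n-p}}\Phi_m^*[\overline{Z}] \;=\; \frac{1}{m^{n-p}}\bigl[\Phi_m^{-1}(\overline{Z})\bigr] \;=\; \frac{1}{m^{n-p}}\,\overline{\bigl[\Phi_m^{-1}(Z)\bigr]} \;=\; \overline{\tfrac{1}{m^{n-p}}\Phi_m^*[Z]},
\]
so that the sequence on $X_{\Sigma}$ is precisely the sequence of trivial extensions of the currents studied on $(\mathbb{C}^*)^n$ in Theorem~\ref{thm:main_conv}. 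Since $\overline{Z}$ is compact in $X_{\Sigma}$ by Theorem~\ref{thm:tevelev}(a) and $m^{p-n}\Phi_m^*$ preserves the cohomology class, the total masses of these extensions are uniformly bounded; Banach--Alaoglu then yields a weakly convergent subsequence with cluster value $\widetilde{\mathscr{S}}\in \mathscr{D}'_{p,p}(X_{\Sigma})$.

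To identify $\widetilde{\mathscr{S}}$, I would first restrict to the open set $(\mathbb{C}^*)^n$: by weak continuity of restriction and Theorem~\ref{thm:main_conv},
\[
\widetilde{\mathscr{S}}{\upharpoonright}_{(\mathbb{C}^*)^n} \;=\; \mathscr{T}_{\trop(Z)} \;=\; \overline{\mathscr{T}}_{\trop(Z)}{\upharpoonright}_{(\mathbb{C}^*)^n}.
\]
For the support of $\widetilde{\mathscr{S}}$ on the toric boundary, I would appeal to the extended tropicalisation: Equation~(\ref{eq:extended_log_lim}) gives that $\Log(\supp{\Phi_m^*[\overline{Z}]}) \to \overline{\trop(Z)}$ in the Hausdorff metric on compacts of $N_{\Sigma}$, so Remark~\ref{rem:supp_haus} yields
\[
\supp{\widetilde{\mathscr{S}}} \;\subseteq\; \Log^{-1}\bigl(\overline{\trop(Z)}\bigr) \;=\; \supp{\overline{\mathscr{T}}_{\trop(Z)}}.
\]

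The final step, imported essentially verbatim from Step~4 of the proof of Theorem~\ref{thm:main_conv}, is to compare $\widetilde{\mathscr{S}}$ with $\overline{\mathscr{T}}_{\trop(Z)}$. Since $\overline{\mathscr{T}}_{\trop(Z)}$ is the Skoda--El~Mir trivial extension of $\mathscr{T}_{\trop(Z)}$, and is minimal among closed positive extensions, the current $\widetilde{\mathscr{S}}-\overline{\mathscr{T}}_{\trop(Z)}$ is itself closed and positive, with support contained in $\bigcup_i D_i \cap \supp{\overline{\mathscr{T}}_{\trop(Z)}}$, where the $D_i$ are the torus-invariant prime divisors of $X_{\Sigma}$. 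By the transversality statement Theorem~\ref{thm:weight_intersect}(a), this set has Cauchy--Riemann dimension strictly less than $p$, and Demailly's first theorem of support \cite[Theorem~III.2.10]{DemaillyBook1} forces $\widetilde{\mathscr{S}} = \overline{\mathscr{T}}_{\trop(Z)}$. As every cluster value coincides with the same limit, the full sequence converges. The only delicate point---and the one I expect to be the main obstacle---is justifying the positivity of the difference $\widetilde{\mathscr{S}}-\overline{\mathscr{T}}_{\trop(Z)}$ rigorously; this rests exactly on the minimality of the trivial extension, which is the standard content appealed to in Step~4 above.
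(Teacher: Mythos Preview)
Your proposal is correct and follows essentially the same route as the paper: identify $\Phi_m^*[\overline{Z}]$ with the trivial extension of $\Phi_m^*[Z]$ via Remark~\ref{rem:proper_for_all}, invoke Theorem~\ref{thm:main_conv} on the open torus, and then run exactly the argument of Step~4 of that proof to pin down the cluster value on the boundary.

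Your one stated concern, however, is a non-issue. You do \emph{not} need to prove that $\widetilde{\mathscr{S}}-\overline{\mathscr{T}}_{\trop(Z)}$ is positive, nor do you need any minimality property of the trivial extension. Demailly's first theorem of support \cite[Theorem~III.2.10]{DemaillyBook1} applies to any closed current of order~$0$ (i.e.\ with measure coefficients) whose support lies in an analytic set of too small dimension. Since both $\widetilde{\mathscr{S}}$ and $\overline{\mathscr{T}}_{\trop(Z)}$ are positive closed currents, each has measure coefficients, hence so does their difference; the difference is also closed. That is all the theorem requires. (The paper itself calls the difference ``positive closed'' in Step~4, which is loose wording, but the argument only uses closedness and order~$0$.) So you may simply drop the positivity claim and the appeal to minimality, and the proof is complete.
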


\begin{proof}
Theorem \ref{thm:main_intro} asserts that
$$
\frac{1}{m^{n-p}}\big(\Phi_m{\upharpoonright}_{(\C^*)^n}\big)^*[Z] \longrightarrow \mathscr{T}_{\trop(Z)}, \quad \text{as $m\to \infty$}.
$$
As in Step 4 of the proof of Theorem~\ref{thm:main_conv},
$$
\frac{1}{m^{n-p}}\overline{\big(\Phi_m{\upharpoonright}_{(\C^*)^n}\big)^*[Z] }\longrightarrow \overline{\mathscr{T}}_{\trop(Z)}, \quad \text{as $m\to \infty$}.
$$
Effectively, we only need to observe the equality 
$$
 \overline{\big(\Phi_m{\upharpoonright}_{(\C^*)^n}\big)^*[Z] } = \Phi_m^*[\bar{Z}],
$$
which is addressed in Remark~\ref{rem:proper_for_all}.
\end{proof}

\begin{figure}[t]
  \includegraphics[scale = 0.25]{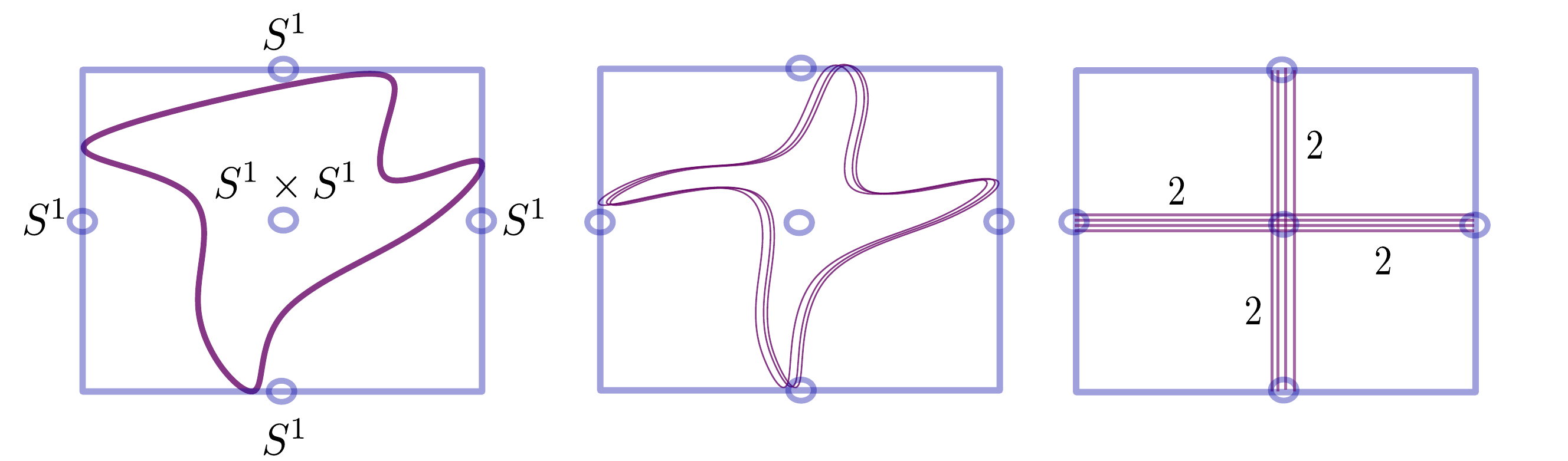}
  \caption{Dynamical tropicalisation in $\P^1 \times \P^1$. See the discussion following the proof of Theorem~\ref{thm:toric_tori}.}
  \label{fig:first_fig}
\end{figure}
Let us now elaborate on Figure~\ref{fig:first_fig}. The toric variety associated to the square is indeed $\P^1 \times \P^1.$ The interior of the square corresponds to the two dimensional orbit $\cO(\{0\}),$ and the sides of the square to one dimensional orbit closures. On the left, the tropical compactification of a curve $C\subseteq (\C^*)^2$ in $\P^1 \times \P^1$ is considered. The closure $\overline{C}$ intersects the one dimensional toric orbits properly and with multiplicity two at all the intersection points. All the toric orbits remain invariant under $\Phi^{-1}_m$, and under this application the points in each toric orbit move towards the compact torus above the corresponding distinguished points. By applying $\Phi^{-1}_m$ to the  curve we obtain a local covering. Multiplying the integration current $\frac{1}{\textrm{deg}(C)}[\Phi_m^{-1}(\overline{C})]$ by $1/m$ normalises the total mass, which is depicted by the thickness of the curves. The limit becomes the associated tropical current whose fibers are transverse to the toric orbits, where the weights account for the intersection multiplicity numbers. Accordingly, the intersection points of multiplicity two converge to the \emph{transverse} (non-normalised) Haar measures on the distinguished circles with mass two.  

\subsection{Dynamical Kapranov Theorem}\label{sec:Kapranov}
Kapranov's theorem~\cite[Theorem 3.1.3]{Maclagan-Sturmfels} is a foundational result in tropical geometry for which we provide a dynamical version in the \emph{trivial valuation} case. 
Let $A\subseteq \Z_{\geq 0}^n$ be a finite set, and $f(z) =  \sum_{\alpha \in A} c_{\alpha} z^{\alpha} \in \C[z]$ be a complex polynomial. We set the \emph{tropicalisation} of $f$ to be
$$
\trop(f):= \max_{\, \alpha \in A} \{ \langle -\alpha ,~\cdot~ \rangle \}:\R^n \longrightarrow \R.
$$
 We have
$$
\frac{1}{m}\Phi_m^*\big(\log|f| \big) = \frac{1}{m}\big(\log \big|\sum_{\alpha \in A} c_{\alpha} z^{m\,\alpha} \big| \big).
$$
Similar to the observation in the beginning of Section \ref{sec:trop_algebra} we obtain
\begin{equation}\label{eq:Kapranov_conv}\tag{III}
\frac{1}{m}\Phi_m^*\big(\log|f| \big) \longrightarrow
\trop(f)\circ \Log,
\end{equation}
in $L^1_{\textrm{loc}}(\C^n)$. See also \cite[Theorem 3.4]{Rash}. We note that the negative coefficient of $\alpha$ in $\trop(f)$ is to compensate for using $\Log= -\log \otimes 1$ in the preceding formula.  
\vskip 2mm 
The (dynamical) Kapranov's theorem asserts that the tropicalisation of the algebraic hypersurface $V(f)$ coincides with the (tropical current of the) tropical variety of tropicalisation of $f.$ 
\vskip 2mm
Here is Theorem \ref{thm:intro_Kapranov} of the introduction. 
%

\begin{theorem}[Dynamical Kapranov Theorem]\label{thm:dyn_Kapranov}
Assume that $[V(f)]= dd^c \log|f|,$ is an integration current in $\sD_{n-1,n-1}'((\C^*)^n)$. Let $\mathfrak{q}:= \trop(f),$ defined as above. We have that,
$$m^{-1} \Phi_m^*[V(f)] \longrightarrow \sT_{V_{\trop}(\mathfrak{q})}.$$ 
\end{theorem}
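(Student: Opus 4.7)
The plan is to combine the Lelong--Poincar\'e equation with the $L^1_{\mathrm{loc}}$-convergence recorded in~(\ref{eq:Kapranov_conv}) via the weak continuity of $dd^c,$ and then identify the resulting closed positive $(1,1)$-current with $\sT_{V_{\trop}(\mathfrak{q})}.$ By Theorem~\ref{thm:LP}, $[V(f)] = dd^c \log|f|;$ since $\Phi_m$ is holomorphic and $dd^c$ commutes with pullback,
$$
m^{-1}\Phi_m^*[V(f)] \;=\; dd^c\bigl(m^{-1}\Phi_m^*\log|f|\bigr) \;=\; dd^c\Bigl(\tfrac{1}{m}\log|f\circ \Phi_m|\Bigr).
$$
By~(\ref{eq:Kapranov_conv}), $m^{-1}\Phi_m^*\log|f| \to \mathfrak{q}\circ\Log$ in $L^1_{\mathrm{loc}}((\C^*)^n),$ hence weakly as currents. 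Since $dd^c$ is weakly continuous on $\cD'_{*,*}((\C^*)^n),$ I would conclude
$$
m^{-1}\Phi_m^*[V(f)] \;\longrightarrow\; dd^c\bigl(\mathfrak{q}\circ\Log\bigr) \quad \text{in } \cD'_{n-1,n-1}((\C^*)^n).
$$

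Next, I would identify this limit. The function $\mathfrak{q}\circ\Log$ is a maximum of pluriharmonic functions $\log|z^\alpha|,$ so it is plurisubharmonic and manifestly $(S^1)^n$-invariant; hence $dd^c(\mathfrak{q}\circ\Log)$ is a closed positive $(S^1)^n$-invariant current of bidimension $(n-1,n-1).$ Its support is precisely $\Log^{-1}(|V_{\trop}(\mathfrak{q})|),$ since $\mathfrak{q}\circ\Log$ is pluriharmonic where $\mathfrak{q}$ is differentiable and is strictly plurisubharmonic across the non-differentiability locus. Proposition~\ref{prop:charac} then forces $dd^c(\mathfrak{q}\circ\Log) = \sT_{\cC}$ for a unique weighted polyhedral complex $\cC$ with $|\cC|= |V_{\trop}(\mathfrak{q})|.$

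The remaining, and principal, step is to match the weights of $\cC$ with those of $V_{\trop}(\mathfrak{q})$ given in Definition~\ref{def:tropical_poly}(b). Fix a top-dimensional cell $\sigma$ on which two indices $\alpha_1,\alpha_2$ achieve the maximum. On a tubular neighbourhood of $\Log^{-1}(\sigma^\circ),$
$$
\mathfrak{q}\circ\Log(z) \;=\; \max\bigl\{\log|z^{\alpha_1}|,\log|z^{\alpha_2}|\bigr\} \;=\; \log|z^{\alpha_1}| + \log^{+}|z^{\alpha_2-\alpha_1}|.
$$
Writing $\alpha_2-\alpha_1 = w\gamma$ with $\gamma$ primitive and $w$ its lattice length, the first summand is pluriharmonic on $(\C^*)^n,$ so $dd^c$ kills it; and $\log^{+}|z^{w\gamma}| = w\log^{+}|z^\gamma|.$ It suffices to show $dd^c\log^{+}|z^\gamma|$ is the \emph{unweighted} tropical current of the hyperplane $\{\langle\gamma,\cdot\rangle = 0\}\subseteq \R^n.$ This can be verified either by a direct computation in the single coordinate $z^\gamma$ (reducing to $dd^c\max(0,\log|\zeta|)$ on $\C^*$, which is the Haar measure on $\{|\zeta|=1\}$ pulled back along the monomial) or, more structurally, by intersecting with a toric invariant divisor in an appropriate smooth toric compactification and invoking Theorem~\ref{thm:weight_intersect}(b) to read off weight $1.$ Either route yields weight $w$ on $\sigma,$ matching Definition~\ref{def:tropical_poly}(b), and completes the proof.

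The main obstacle is this last weight identification: once Steps~1--2 reduce the problem to computing $dd^c(\mathfrak{q}\circ\Log),$ the conceptual content rests in showing that $dd^c\log^{+}|z^\gamma|$ is the Haar-averaged integration current over the subtorus $\{z^\gamma=1\}$ with unit multiplicity. An alternative bypass is to apply Theorem~\ref{thm:main_conv} componentwise to $V(f) = \sum m_j V_j$ to obtain $dd^c(\mathfrak{q}\circ\Log) = \sT_{\trop V(f)};$ this would recover the classical Kapranov theorem (including weights) as a by-product of the dynamical identification rather than invoke it.
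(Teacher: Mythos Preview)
Your proof is correct and follows essentially the same route as the paper: apply Lelong--Poincar\'e, commute $\Phi_m^*$ with $dd^c$, invoke the $L^1_{\mathrm{loc}}$ convergence~(\ref{eq:Kapranov_conv}) together with weak continuity of $dd^c$, and then identify $dd^c(\mathfrak{q}\circ\Log)$ with $\sT_{V_{\trop}(\mathfrak{q})}$. The paper packages that last identification as the tropical Lelong--Poincar\'e equation (Proposition~\ref{prop:Lelong-Poincare}) and simply cites it, whereas you inline its proof; your weight computation via $\log^{+}|z^{\gamma}|$ and the monomial reduction is a legitimate variant of the paper's argument, which instead compares masses by applying $\tfrac{1}{m}\Phi_m^*$ to the fiber equation $dd^c\log|(x\cdot z)^{\alpha_+}-(x\cdot z)^{\alpha_-}|$ and using~(\ref{eq:Kapranov_conv}) once more.
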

\begin{proof}
We have $\Phi_m^*[V(f)]= \Phi_m^*(dd^c \log|f|) =dd^c (\Phi_m^* \log|f_t|).$ Dividing by $m$ and taking the limit yields
\begin{equation*}
m^{-1} \Phi_m^*[V(f)] =  dd^c \big[m^{-1}\Phi_m^* \log|f|\big]   \longrightarrow dd^c[\mathfrak{q} \circ \Log] = \sT_{V_{\Trop}(\mathfrak{q})}.
 \end{equation*}
For the above convergence, we have used Equation~(\ref{eq:Kapranov_conv}) and the continuity differentials with respect to the weak limit, and for the latter equality we have used the tropical Lelong--Poincar\'e equation; see Proposition \ref{prop:Lelong-Poincare}.
\end{proof}

\begin{remark}
\begin{itemize}
    
    \item [(a)]We have observed that the tropicalisation of the Lelong--Poincar\'e equation leads to a version of Kapranov's theorem. It is therefore tempting to tropicalise the King's formula \cite[III.8.18]{DemaillyBook1}, and wish to obtain a version of the Fundamental Theorem of Tropical Geometry, see \cite[Section 3.2]{Maclagan-Sturmfels}. However, Monge--Amp\`ere operators are not in general weakly continuous, see \cites{Lelong-Discont, Cegrell-discont, Fornaess-Sibony}, and naively applying the  dynamical tropicalisation to the King's formula does not provide  useful information.
    
    \item[(b)] Let  $W \subseteq \C^* \times (\C^*)^n$ be an irreducible subvariety of dimension $p+1$ such that the projection of $W$ onto the first factor is \emph{flat} and surjective. We may therefore regard $W$ as a one-parameter family of algebraic varieties $W_t,$ where each $W_t$ is the $p$-dimensional fiber above $t \in \C^*.$ In tropical geometry, it is shown that the limit in the Hausdorff sense of
$$
\frac{1}{\log|t|}\Log(W_t), \quad \text{as $t\to \infty$,}
 $$
is a rational polyhedral complex which is balanced. See \cites{Rull-thesis, Pass-Rull} for the codimension one case, and \cite{Jonsson} in any codimensions and the background on the evolution of proofs. The correct analogue of this statement in our setting seems to be proving that the sequence 
 $$\frac{1}{m^{n-p+1}}\Phi_m^* \big[(e^m,W_{e^m})\big], \quad \text{as $m\to \infty,$}$$
 converges to a tropical current. See also  \cite[Theorem 5.2.7]{Babaee} for a related result.
    
\end{itemize}
\end{remark}

\subsubsection{Tropical Lelong--Poincar\'e Equation}\label{sec:trop_Lelong-Poincare}
 Chambert-Loir and Ducros in \cite{Chambert-Ducros} proved the generalised Lelong--Poincar\'e equation for Lagerberg currents. We have the following theorem which also appeared in \cite{Babaee}.
\begin{proposition}\label{prop:Lelong-Poincare}
For any tropical polynomial $\mathfrak{q}:\RR^n \to \R$,
$$
dd^c [\mathfrak{q} \circ \Log] = \sT_{V_{\Trop}(\mathfrak{q})},
$$
in $\mathscr{D}_{p,p}'((\C^*)^n)$.
\end{proposition}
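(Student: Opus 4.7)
The plan is to first recognise $\sT := dd^c[\mathfrak{q}\circ \Log]$ as a closed positive, $(S^1)^n$-invariant $(1,1)$-current whose support is contained in $\Log^{-1}(|V_{\trop}(\mathfrak{q})|)$, invoke Proposition~\ref{prop:charac} to conclude that $\sT$ is a tropical current, and finally pin down the weights by a one-variable local computation on each maximal cell.

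For the structural step, I would rewrite
\[
\mathfrak{q}\circ \Log(z) \;=\; \max_{\alpha \in A}\bigl(\langle \alpha,\Log(z)\rangle + c_\alpha\bigr) \;=\; \max_{\alpha \in A}\log\bigl|e^{c_\alpha}z^{-\alpha}\bigr|,
\]
exhibiting $\mathfrak{q}\circ \Log$ as a finite maximum of pluriharmonic functions $\log|g_\alpha|$, with $g_\alpha := e^{c_\alpha}z^{-\alpha}$ a nowhere-vanishing Laurent monomial on $(\C^*)^n$. Consequently $\mathfrak{q}\circ \Log$ is locally bounded plurisubharmonic, so $\sT$ is a well-defined closed positive $(1,1)$-current, obviously $(S^1)^n$-invariant. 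Off $|V_{\trop}(\mathfrak{q})|$ the function $\mathfrak{q}$ is affine, so $\mathfrak{q}\circ \Log$ is pluriharmonic and $\sT$ vanishes there. Since $|V_{\trop}(\mathfrak{q})|$ is pure of dimension $n-1$, Proposition~\ref{prop:charac} gives $\sT = \sT_{\cC}$ for some weighted refinement $\cC$ of $V_{\trop}(\mathfrak{q})$, reducing the problem to identifying weights.

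The weight on a maximal cell $\sigma$ is computed locally: near the relative interior of $\sigma$ only two monomials $\alpha_1,\alpha_2\in A$ dominate, so $\mathfrak{q}=\max(\langle\alpha_1,\cdot\rangle+c_1,\langle\alpha_2,\cdot\rangle+c_2)$. Writing $g_i := e^{c_i}z^{-\alpha_i}$,
\[
\mathfrak{q}\circ \Log \;=\; \log\max\bigl(|g_1|,|g_2|\bigr) \;=\; \log|g_1|+\log^+\!\bigl|g_2/g_1\bigr|,
\]
and pluriharmonicity of $\log|g_1|$ reduces $\sT$ near $\sigma^\circ$ to $dd^c\log^+|h|$ for the Laurent monomial $h := g_2/g_1 = e^{c_2-c_1}z^{\alpha_1-\alpha_2}$. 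Factoring $\alpha_1-\alpha_2 = w_\sigma\beta$ with $\beta\in\Z^n$ primitive (so $w_\sigma$ is the lattice length appearing in Definition~\ref{def:tropical_poly}(b)), I would complete $\beta$ to a $\Z$-basis of $\Z^n$; the corresponding monomial substitution is an automorphism of $(\C^*)^n$ in which $h$ becomes $aw_1^{w_\sigma}$ in a single new coordinate. Then
\[
dd^c\log^+\!\bigl|aw_1^{w_\sigma}\bigr| \;=\; w_\sigma\, dd^c\log^+\!\bigl(|w_1|/r_0\bigr),\qquad r_0 := |a|^{-1/w_\sigma},
\]
and the one-variable calculation $dd^c\log^+|\zeta|=$ normalised Haar measure on $|\zeta|=1$ (via Stokes/Jensen) together with Example~\ref{Ex:hypersurface} identifies the right-hand side with $\sT_{\aff(\sigma)}$. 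Hence $\sT=w_\sigma\sT_{\aff(\sigma)}$ locally near $\sigma^\circ$, matching the weight of Definition~\ref{def:tropical_poly}(b); summing over cells yields $\sT=\sT_{V_{\trop}(\mathfrak{q})}$.

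I expect the main obstacle to be this final weight identification: verifying that the monomial substitution really is an automorphism of the torus (using primitivity of $\beta$), and that the one-dimensional computation for $dd^c\log^+|\zeta|$ lifts in the new coordinates to the $(S^1)$-averaged fibration underlying $\sT_{\aff(\sigma)}$ with the correct normalisation. A secondary bookkeeping issue is ruling out contributions from the codimension-$\geq 2$ strata of $V_{\trop}(\mathfrak{q})$, which is ensured by the pure $(n-1)$-dimensional support hypothesis of Proposition~\ref{prop:charac} together with the strict positivity of each $w_\sigma$ established above.
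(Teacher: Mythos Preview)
Your proof is correct and shares the same structural opening as the paper: both invoke the $(S^1)^n$-invariance and the support containment in $\Log^{-1}(|V_{\trop}(\mathfrak{q})|)$ to conclude via Proposition~\ref{prop:charac} that $dd^c[\mathfrak{q}\circ\Log]$ is a tropical current, reducing everything to a local weight computation near each maximal cell $\sigma^\circ$.

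The divergence is in that weight computation. You subtract a pluriharmonic term to reduce to $dd^c\log^+|h|$ with $h=e^{c_2-c_1}z^{\alpha_1-\alpha_2}$, factor $\alpha_1-\alpha_2=w_\sigma\beta$ with $\beta$ primitive, complete $\beta$ to a $\Z$-basis to obtain a monomial automorphism of $(\C^*)^n$, and then appeal to the one-variable Jensen identity $dd^c\log^+|\zeta|=\mu(S^1)$ together with Example~\ref{Ex:hypersurface}. The paper instead keeps the expression $dd^c\max\{\langle\alpha_+,\cdot\rangle,\langle\alpha_-,\cdot\rangle\}\circ\Log$, writes each fiber of $\sT_{\aff(\sigma)}$ as $dd^c\log|(x\cdot z)^{\alpha_+}-(x\cdot z)^{\alpha_-}|$ via Example~\ref{Ex:hypersurface}, and then applies $\tfrac{1}{m}\Phi_m^*$ to this equality, using the $L^1_{\mathrm{loc}}$ convergence of Equation~(\ref{eq:Kapranov_conv}) and mass preservation in a compactification to match the two sides. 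Your route is more elementary and entirely self-contained (no dynamical input), at the cost of the bookkeeping you correctly flag: checking that the monomial change of variables carries $\sT_{H'}$ for the coordinate hyperplane to $\sT_{\aff(\sigma)}$ with the right normalisation. The paper's route is less direct but stays within the article's dynamical narrative; note also that it invokes Equation~(\ref{eq:Kapranov_conv}), so that Proposition~\ref{prop:Lelong-Poincare} in the paper is not logically independent of the $\Phi_m^*$ machinery, whereas your argument is.
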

\begin{proof}
On the one hand, $\mathfrak{q}$ is a piece-wise linear function, and it is easy to see that the support of $dd^c [\mathfrak{q} \circ \Log]$ coincides with the set $\Log^{-1}(|V_{\trop}(\mathfrak{q})|),$ on the other hand, $dd^c [\mathfrak{q} \circ \Log]$ is $(S^1)^n$ invariant. As a consequence of Proposition~\ref{prop:charac}, $dd^c [\mathfrak{q}\circ \Log]$ is a tropical current. By Demailly's first theorem of support, \cite[Theorem III.2.10]{DemaillyBook1}, we only need to verify the equality of these $(n-1,n-1)$-dimensional currents on $$\bigcup_{\sigma,~ \dim(\sigma)=n-1}\Log^{-1}(\sigma^{\circ}).$$

Therefore, it only remains to see that the weights induced by $dd^c [\mathfrak{q} \, \circ \, \Log]$ on any $(n-1)$-dimensional cone in $V_{\trop}(\mathfrak{q})$ coincides with the weights given in Definition~\ref{def:tropical_poly}.(b). We can assume, without loss of generality, that $\sigma$ contains the origin. To prove the equality of the currents, it is sufficient to prove that for any ball $B$ such that $B \cap \Log^{-1}(\sigma^{\circ})\neq \varnothing,~B \cap \partial \Log^{-1}(\sigma^{\circ}) = \varnothing,$ and $B\cap \Log^{-1}\big( |V_{\Trop}(\mathfrak{q})| -\sigma\big) = \varnothing,$
$$
dd^c [\mathfrak{q} \circ \Log]{\upharpoonright}_B = \sT_{V_{\Trop}(\mathfrak{q})}{\upharpoonright}_B.
$$
Assume that $\text{aff}(\sigma) = \{x\in \R^n: \langle \beta_1 , x \rangle= \langle \beta_2 , x \rangle =\mathfrak{q}(x) \},$ for some $\beta_1,\beta_2\in \Z^n.$ We have that
\begin{align*}
dd^c [\mathfrak{q} \circ \Log]{\upharpoonright}_B &= dd^c \max \{\langle \beta_1 , \Log(z) \rangle , \langle \beta_2 , \Log(z) \rangle \}{\upharpoonright}_B, \\ \sT_{V_{\Trop}(\mathfrak{q})}{\upharpoonright}_B &=w_{\sigma} \sT_{\text{aff}(\sigma)}{\upharpoonright}_B \, .
\end{align*}
Therefore, it suffices to prove that
$$
w_{\sigma} \sT_{\text{aff}(\sigma)}= dd^c \max \{\langle \beta_1 , \Log(z) \rangle , \langle \beta_2 , \Log(z) \rangle \}.
$$
Recall by Definition~\ref{def:tropical_poly}, $w_{\sigma}$ is defined to be the lattice length of $\beta_1 - \beta_2.$ Let us write
$$
\beta_1 - \beta_2 = w_{\sigma} \alpha = w_{\sigma}(\alpha_{+}- \alpha_{-}),
$$
where $\alpha$ is a primitive vector and $\alpha_{\pm}\in \Z^n_{\geq 0}.$ We have
$$
dd^c \max \{\langle \beta_1 , ~.~ \rangle , \langle \beta_2 ,~.~ \rangle \}\circ \Log(z) = w_{\sigma}dd^c \max \{\langle \alpha_{+} ,~.~ \rangle , \langle \alpha_{-} , ~.~ \rangle \}\circ \Log(z).
$$
Finally, to see that the masses of $\sT_{\text{aff}(\sigma)}$ and $dd^c \max \{\langle \alpha_{+} ,~.~ \rangle , \langle \alpha_{-} , ~.~ \rangle \}\circ \Log(z)$ coincide, we note that $\sT_{\text{aff}(\sigma)}$ is an average with respect to the Haar measure of the fibers, and it is enough to show that the mass of each fiber coincides with the mass of $dd^c \max \{\langle \alpha_{+} ,~.~ \rangle , \langle \alpha_{-} , ~.~ \rangle \}\circ \Log(z).$ By Example~\ref{Ex:hypersurface}, for each $x \in (S^1)^n,$ the integration current on each fiber is given by
$$\big[\pi_{\text{aff}(\sigma)}^{-1}(\bar{x})\big]=dd^c \log|(x\cdot z)^{\alpha_{+}} - (x\cdot z)^{\alpha_{-}}|,$$
where $\bar{x} = x + S_{(H \cap \mathbb{Z}^n)}$ is the element in the quotient $S_{\mathbb{Z}^n/(H \cap \mathbb{Z}^n)}$. Applying $\frac{1}{m}\Phi^*_m$ on both sides of this equation preserves the mass in any compactification of $(\C^*)^n$, but by Equation~(\ref{eq:Kapranov_conv}) the left hand side becomes
$dd^c \max \{\langle \alpha_{+} ,~.~ \rangle , \langle \alpha_{-} , ~.~ \rangle \}\circ \Log(z).$
\end{proof}

\subsection{The Genericity Condition in $\P^n$}\label{sec:no-contradiction}
Comparing our convergence results to the conjecture of Dinh and Sibony, we need to verify that there all the \emph{generic} algebraic subvarieties $Z\subseteq \mathbb{P}^n$ of a given degree have the same tropicalisation. This implication, in fact, is the main result of main theorem of R\"{o}mer and Schmitz in  \cite{Romer-Schmitz}. Roberto Gualdi proposed a nice and intuitive explanation of this implication in codimension one: if we consider a homogeneous polynomial of degree $d$ in $\mathbb{P}^n,$ then all the coefficients are generically non-zero, which in turn, imposes the Newton polytope, hence the tropicalisation (with respect to trivial valuation). The genericity condition in dynamical systems usually amounts to avoiding certain invariant sets of low dimension.  In Section~\ref{sec:trop-toric}, we observed that all the toric orbits are invariant under $\Phi_m$ and $\Phi_m^{-1},$ and the relation between the toric orbits and tropicalisation is already stated in the following proposition by Sturmfels and Tevelev.
\begin{proposition}[{\cite[Proposition 3.9]{Sturmfels-Tevelev}}]\label{prop:sturm-tev}
Let $\Sigma$ be a complete (rational) fan in $N_{\R}$ and $Z$ be a $p$-dimensional  subvariety of $(\C^*)^n.$ Assume that the closure $\bar{Z}\subseteq X_{\Sigma},$ does not intersect any of the toric orbits of $X_{\Sigma}$ of codimension greater than $p.$ Then, $\trop(Z)$ equals the union of all $p$-dimensional cones $\sigma\in \Sigma$ such that $\cO_{\sigma}$ intersects $\bar{Z}.$
\end{proposition}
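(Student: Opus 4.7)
The plan is to deduce the proposition from two ingredients: (i) the Tevelev correspondence
\[
\overline Z\cap\cO_\sigma\neq\varnothing \iff \sigma^\circ\cap\trop(Z)\neq\varnothing\quad(\star)
\]
valid for every $\sigma\in\Sigma$, and (ii) a clopen-in-$\sigma^\circ$ argument exploiting the balancing condition on $\trop(Z)$. The correspondence $(\star)$ I would establish as a preliminary lemma: refine $\Sigma$ to a unimodular fan $\widetilde\Sigma$ in which $\trop(Z)$ is a subfan (via Theorem~\ref{thm:toric_adv}(a)), apply Theorem~\ref{thm:tevelev}(b) to the subfan supported on $\trop(Z)$, and transfer the statement back to $\Sigma$ via properness of the toric morphism $X_{\widetilde\Sigma}\to X_\Sigma$.

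Combining the hypothesis with $(\star)$ immediately yields $\sigma^\circ\cap\trop(Z)=\varnothing$ for every $\sigma\in\Sigma$ with $\dim\sigma>p$, so $\trop(Z)$ is contained in the $p$-skeleton of $\Sigma$. For a $p$-dimensional cone $\sigma\in\Sigma$, I then claim that $\sigma^\circ\cap\trop(Z)\neq\varnothing$ forces $\sigma\subseteq|\trop(Z)|$. The set $|\trop(Z)|\cap\sigma^\circ$ is trivially closed in $\sigma^\circ$; the key point is openness. Let $D$ be any $(p-1)$-cell of $\trop(Z)$ sitting in $\sigma^\circ$. Each $p$-cell $C$ of $\trop(Z)$ with $D$ as a face has $C^\circ$ in the relative interior of a unique cone $\sigma_C\in\Sigma$, and $\dim\sigma_C\leq p$ by hypothesis; since $D\subseteq C\subseteq\sigma_C$ while $D$ meets $\sigma^\circ$, disjointness of relative interiors of cones in a fan forces $\sigma\cap\sigma_C=\sigma$, hence $\sigma=\sigma_C$ and $\aff(C)=\aff(\sigma)$. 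The balancing condition at $D$ (Definition~\ref{BalancingCondition}) thus takes place entirely within the one-dimensional quotient $\aff(\sigma)/\aff(D)$, forcing $p$-cells of $\trop(Z)$ on both sides of $D$ within $\sigma^\circ$. Consequently $|\trop(Z)|\cap\sigma^\circ$ has no boundary inside $\sigma^\circ$ and is open there; connectedness of $\sigma^\circ$ finishes the claim.

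Assembling the pieces, a $p$-dimensional cone $\sigma\in\Sigma$ is contained in $|\trop(Z)|$ if and only if $\cO_\sigma\cap\overline Z\neq\varnothing$, by the claim together with $(\star)$. Conversely, since $\trop(Z)$ is pure of dimension $p$---a standard tropical-geometry fact, also visible from Theorem~\ref{thm:main_conv} as the limit $\overline\sT_{\trop(Z)}$ has bidimension $(p,p)$---every $p$-cell $C$ of $\trop(Z)$ is contained in the unique $p$-cone $\sigma_C$ of $\Sigma$ with $C^\circ\subseteq\sigma_C^\circ$, and this $\sigma_C$ appears in the right-hand union. The proposition follows. The main obstacle will be establishing $(\star)$: its reverse direction demands a careful refinement of $\Sigma$ combined with properness of the associated toric morphism and the existence-of-points statement of Theorem~\ref{thm:tevelev}(b); the balancing/clopen step afterward is essentially formal once $\trop(Z)$ is known to sit inside the $p$-skeleton.
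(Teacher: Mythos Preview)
The paper does not prove this proposition; it is quoted from \cite{Sturmfels-Tevelev} and used as a black box in the proof of Theorem~\ref{thm:generic}. There is thus no in-paper argument to compare against.

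Your outline is the standard argument and is correct. Two points deserve tightening. First, the clopen step tacitly requires a fan structure on $\trop(Z)$ that is compatible with $\Sigma$ (each cell of $\trop(Z)$ contained in a single cone of $\Sigma$); pass to the common refinement to arrange this, which is harmless since both the support and the balancing condition are refinement-invariant. Second, ``$(p-1)$-cell $D$ sitting in $\sigma^\circ$'' should be read as $D^\circ\subseteq\sigma^\circ$ (a cone through the origin cannot literally lie in $\sigma^\circ$); with the compatible refinement this is precisely what one finds at a boundary facet of $|\trop(Z)|\cap\sigma^\circ$, and then your containment argument forcing $\sigma_C=\sigma$ for every adjacent $p$-cell $C$ is valid, so the balancing identity lives in the rank-one lattice $(\Z^n\cap H_\sigma)/(\Z^n\cap H_D)$ and yields the required cells on both sides. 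The derivation of $(\star)$ via a unimodular refinement $\widetilde\Sigma$ and Theorem~\ref{thm:tevelev} is also sound once you observe that the proper toric morphism $X_{\widetilde\Sigma}\to X_\Sigma$ maps the closure of $Z$ onto the closure of $Z$, and that the preimage of $\cO_\sigma$ is the union of the orbits $\cO_\tau$ with $\tau^\circ\subseteq\sigma^\circ$.
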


To deduce that the genericity condition for an algebraic subvariety of $\P^n$ determines its tropicalisation, we will  observe that the union of $p$-dimensional cones in $\P^n$ is the support of a \emph{ strongly extremal} fan. Recall that
\begin{definition}
\begin{itemize}
 \item [(a)] A tropical variety $\cC$ is called \emph{strongly extremal} if the support of $\cC$ is can be uniquely weighted up to a multiple to become balanced. In other words, $\cC$ generates an extremal ray in the cone of positively weighted tropical varieties. 

  \item [(b)] A current $\sT$ in the cone of positive closed currents of bidimension $(p,p)$ on $X$ is called \emph{extremal}, if any decomposition $\sT = \sT_1 +\sT_2$ in this cone implies that $\sT_1$ and $\sT_2$ are positive multiples of $\sT.$ In other words, $\sT$ generates an extremal ray in the cone of positive closed currents of bidimension $(p,p).$
\end{itemize}
\end{definition}

\begin{lemma}\label{lem:str_ex_subset}
Let $\cC, \tilde{\cC}$ be two tropical varieties of same dimension such that  $\cC$ is strongly extremal, and $|\tilde{\cC}|\subseteq |\cC|.$ Then $|\tilde{\cC}|= |{\cC}|.$

\end{lemma}

\begin{proof}
There is a sufficiently large integer $k$, such that the tropical variety $k \, \cC-\tilde{\cC}$ is a (positively weighted) tropical variety with support $|\cC|$. If $|\tilde{\cC}|\subsetneq  |{\cC}|,$ the weights on $k \, \cC-\tilde{\cC}$ cannot be a multiple of the weights on $\cC.$

\end{proof}

\begin{remark}
    It can be shown that if $\cC\subset \R^n$ is strongly extremal, and it does not lie in any proper affine subspace of $\R^n,$ then $\sT_{\cC}$ is extremal; see  \cite{Babaee, BH}. 
\end{remark}


Essentially the same proof as above gives the following. 
\begin{lemma}\label{prop:extremal_subset}
Assume that $\cC, \tilde{\cC}$ are two tropical varieties such that $\sT_{\cC}$ is extremal among the tropical currents, and $|\tilde{\cC}|\subseteq |\cC|.$ Then $|\cC|= |\tilde{\cC}|.$
\end{lemma}

For a given polyhedral complex $\Sigma$, we denote by $\Sigma(p)$ the union of all $p$-dimensional cells in $\Sigma.$ We give two different  proofs of the following:

\begin{theorem}[\cite{Romer-Schmitz}]\label{thm:generic}
Let $\Sigma$ be the fan of $\P^n,$ and $Z$ be any $p$-dimensional subvariety of $(\C^*)^n$ of degree $d$, and generic in the sense that $\bar{Z}\subseteq \P^n$ does not intersect any of toric orbits of $X_{\Sigma}=\P^n$ with codimension greater than $p.$ Then, $\trop(Z)$ is independent of the  choice of $Z$.
\end{theorem}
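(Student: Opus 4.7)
The plan is to show, under the genericity assumption, that the set $|\trop(Z)|$ coincides with the full $p$-skeleton $\Sigma(p)$ of the fan of $\P^n$ and that each $p$-dimensional cone carries weight $\deg(Z).$ Since $\Sigma(p)$ and $\deg(Z)$ depend only on the degree of $Z,$ this makes $\trop(Z)$ independent of the particular generic $Z$ chosen. The starting point is Proposition~\ref{prop:sturm-tev}: the genericity condition tells us already that $|\trop(Z)|\subseteq |\Sigma(p)|,$ and in fact $|\trop(Z)|$ is the union of those $p$-cones $\sigma\in \Sigma$ for which $\cO(\sigma)\cap \overline{Z}\neq \varnothing.$ Thus we must argue both that \emph{every} such intersection is non-empty, and that the weights are uniform and equal to $\deg(Z).$

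For the first proof (alluded to at the end of Subsection~\ref{sec:no-contradiction}), the idea is to exploit extremality. The $p$-skeleton $\Sigma(p)$ of the fan of $\P^n,$ endowed with all weights equal to $1,$ is the Bergman fan of the uniform matroid $U_{p+1,n+1},$ and this is known to be an \emph{extremal} positive tropical variety (this is precisely the class of fans exploited in \cite{Adi-Baba}). Since $|\trop(Z)|\subseteq |\Sigma(p)|$ and the latter supports an extremal tropical variety, Proposition~\ref{prop:extremal_subset} forces $|\trop(Z)|=|\Sigma(p)|.$ Running the extremality argument once more---applied to $\trop(Z)$ itself, now a balanced positive tropical variety on the same support as an extremal one---realises $\trop(Z)$ as a positive rational multiple of $\cC_{\Sigma(p)}.$ The multiplier is pinned down by computing the weight on a single $p$-cone, which must equal $\deg(Z)$ (see the second proof).

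For the second proof, I would compute the weights intrinsically via Theorem~\ref{thm:weight_intersect}.(b). For a $p$-dimensional cone $\sigma\in \Sigma,$ the orbit closure $D_\sigma\subseteq \P^n$ is a coordinate linear subspace of codimension $p$ and degree one. The genericity hypothesis guarantees $D_\sigma \cap \overline{Z} = \cO(\sigma)\cap \overline{Z},$ i.e.\ no contribution comes from the boundary strata $D_\tau$ with $\dim \tau >p,$ and that the intersection is proper. Passing to cohomology via \cite[Proposition 4.12]{BH} (the same device used in Step~5 of the proof of Theorem~\ref{thm:main_conv}), the weight becomes a product in $H^*(\P^n)=\Z[H]/(H^{n+1}),$
\[
w_\sigma=\int_{\P^n}[D_\sigma]\wedge [\overline{Z}] = \{[D_\sigma]\}\cdot \{[\overline{Z}]\} = H^p \cdot \deg(Z)\, H^{n-p}=\deg(Z).
\]
In particular $w_\sigma>0$ for every $p$-dimensional cone $\sigma,$ forcing $\cO(\sigma)\cap \overline{Z}\neq \varnothing$ for all such $\sigma,$ and so Proposition~\ref{prop:sturm-tev} ensures $|\trop(Z)|$ exhausts $\Sigma(p).$

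The main obstacle is different in each approach. In the first proof, the extremality of the Bergman fan of $U_{p+1,n+1}$ is genuine external input, cited rather than reproved here. In the second proof, the delicate point is verifying that the genericity hypothesis---phrased in terms of orbits of codimension greater than $p$---is precisely what is needed so that the Bezout-type intersection number computed globally in $\P^n$ is concentrated on the single open stratum $\cO(\sigma)$ and hence delivers the weight on the one cone $\sigma$ rather than a sum of contributions from deeper strata.
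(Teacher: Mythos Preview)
Your first proof is essentially the paper's proof~(a): both invoke Proposition~\ref{prop:sturm-tev} for the inclusion $|\trop(Z)|\subseteq |\Sigma(p)|$, identify $\Sigma(p)$ with the support of the Bergman fan of $U_{p+1,n+1}$, cite its extremality, and apply Proposition~\ref{prop:extremal_subset}. The only cosmetic difference is the citation for extremality (the paper points to \cite{Huh}, you to \cite{Adi-Baba}).

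Your second proof, however, is a genuinely different route from the paper's proof~(b). The paper argues dynamically: for a generic $p$-plane $H$ one has $\sT_{\trop(H)}=\lim m^{p-n}\Phi_m^*\omega^{n-p}$, the $(p,p)$ Green current of $\Phi_m$, which is extremal among $\Phi_m$-invariant currents by \cite{Dinh-Sibony:superpot}; this yields extremality of $\trop(H)$ without ever computing a weight. You instead compute directly: the cohomological product $\{D_\sigma\}\cdot\{\overline{Z}\}=\deg(Z)$ in $H^*(\P^n)$ is positive, so $D_\sigma\cap\overline{Z}\neq\varnothing$; genericity forces this intersection into $\cO(\sigma)$, whence $|\trop(Z)|=|\Sigma(p)|$, and then the same cohomology class gives every weight $w_\sigma=\deg(Z)$. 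Your approach is more elementary and self-contained, bypassing the Green-current machinery; the paper's approach, by contrast, reinforces the article's thesis that tropical objects arise naturally from the dynamics of $\Phi_m$, and is what yields the concluding identification of the Julia set $J_p$ with $\Log^{-1}$ of the Bergman fan.

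Two small corrections to your second proof. First, the reference to Theorem~\ref{thm:weight_intersect}.(b) is misplaced: that statement concerns $\overline{\sT}_{\cC}$, not $[\overline{Z}]$; the relevant definition of $w_\sigma$ is the one given in Section~\ref{sec:trop-toric} just before Definition~\ref{def:trop_comp}. Second, the logical order should be: use positivity of the cohomological intersection number in $\P^n$ to get $D_\sigma\cap\overline{Z}\neq\varnothing$ and hence $|\trop(Z)|=|\Sigma(p)|$; \emph{then} invoke Theorem~\ref{thm:tevelev}.(b) (with the $p$-skeleton as the fan) to obtain properness of the intersections; only then is the wedge product $[D_\sigma]\wedge[\overline{Z}]$ admissible and \cite[Proposition~4.12]{BH} applicable to identify $w_\sigma$ with the cohomological number. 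As written, you assert properness directly from genericity, which gives $D_\sigma\cap\overline{Z}\subseteq\cO(\sigma)$ but not immediately that this set is zero-dimensional.
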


\begin{proof}
In view of Proposition~\ref{prop:sturm-tev} we have the inclusion $|\Trop(Z)|\subseteq \bigcup_{\sigma \in \Sigma(p)} \sigma.$ By Lemma~\ref{lem:str_ex_subset}, to derive the converse inclusion it is sufficient to observe that the set $\bigcup_{\sigma \in \Sigma(p)} \sigma$ is the support of a strongly extremal tropical variety.  Let us justify the extremality in two ways by devising several strong theorems. See also \cite[Exercise 6.8.11]{Maclagan-Sturmfels}.
\begin{itemize}
  \item [(a)] $\bigcup_{\sigma \in \Sigma(p)} \sigma$ is the support of the \emph{Bergman fan of the uniform matroid}  $U_{p+1, n+1}$; see \cite[Example 4.2.13]{Maclagan-Sturmfels}.  Therefore it is extremal by {\cite[Theorem 38]{Huh}} and the assertion follows from Lemma \ref{lem:str_ex_subset}.
 
 \item [(b)] Let $H\subseteq (\C^*)^n$ be a generic $p$-dimensional plane. Note that this genericity condition for $H$ is equivalent to saying that all the maximal minors of the matrix of coefficients for any $n-p$ linear equations defining $H$ are non-zero. By \cite[Example 4.2.13]{Maclagan-Sturmfels} for such a generic $p$-dimensional plane $H,$ the set $\trop(H)$ is given by support of the Bergman fan of the uniform matroid: $\bigcup_{\sigma \in \Sigma(p)} \sigma$. To see that $\trop(H)$ is extremal, we observe instead that the current $\sT_{\trop(H)}$ is extremal among all $\Phi_m$-invariant currents. By Theorem~\ref{thm:main_intro} and \cite[Proposition 6.1]{Russakovskii--Shiffman}, for a generic $H$
$$
\sT_{\trop(H)} =\lim_{m\to \infty}m^{p-n}(\Phi_m^*[H]) = \lim_{m\to \infty}m^{p-n}(\Phi^*_m{\omega}^{n-p}),
$$
where $\omega$ is the normalised  Fubini--Study form on $\P^n.$ Finally, the bidimension $(p,p)$ \textit{Green current} of $\Phi_m$, given by $\lim_{m\to \infty}m^{p-n}\Phi^*_m(\omega^{n-p})$, is extremal among $\Phi_m$-invariant currents, {\cite[Theorem 1.0.1]{Dinh-Sibony:superpot}}.

\end{itemize}
As a result, all generic closed $p$-dimensional algebraic varieties of $\P^n$, up to the degree, have the same tropicalisation and,

\end{proof}
The support of the Green current of $\Phi_m$ of bidimension $(p,p)$ is called the \emph{Julia set of order $p$} of $\Phi_m,$ denoted by $J_p$. Therefore, $\Log(J_p)$ coincides with the support of $U_{p+1, n+1}$, \textit{i.e.}, the Bergman fan of uniform matroid of rank $p+1$ on $n+1$ points.

\subsection*{Acknowledgements}
I'm grateful to Nessim Sibony for his invitation and hospitality in Orsay in July 2018, as well as, accepting my invitation to visit Bristol for two weeks as a distinguished visitor in September 2020. A visit which was postponed to September 2022 due to the pandemic, and sadly did not take place. I'm grateful to Charles Favre for several insightful discussions and his comments on the text. I'm thankful to Omid Amini for numerous encouraging discussions. I also thank Roberto Gualdi for reading the draft, and delightful and fruitful discussions. I cordially thank Tien Cuong Dinh, for his comments on the first arXiv version of this article. I finally thank the anonymous referee(s) for their comments which have led to improvement of mathematics and exposition of the text.

\end{document}